\documentclass[a4paper, reqno, 11pt]{amsart}

\usepackage{enumitem}
\usepackage[utf8]{inputenc}
\usepackage[english]{babel}
\usepackage{amssymb,amsmath,amsthm}
\usepackage{bbm}
\usepackage{framed}
\usepackage{fancyhdr}
\usepackage{enumitem}
\usepackage{ifthen}
\usepackage{color}
\usepackage{hyperref}
\usepackage{graphicx}
\usepackage{csquotes}
\usepackage{cleveref}
\usepackage{appendix}

\theoremstyle{plain} 
\theoremstyle{plain} \newtheorem{theorem}{Theorem}
\theoremstyle{plain} \newtheorem{corollary}{Corollary}
\theoremstyle{plain} 
\theoremstyle{plain} \newtheorem{lemma}{Lemma}
\theoremstyle{plain} 
\theoremstyle{plain} \newtheorem{proposition}{Proposition}
\theoremstyle{plain} \newtheorem{definition}{Definition}
\theoremstyle{definition}
\theoremstyle{definition}\newtheorem{remark}{Remark}
\theoremstyle{definition}
\theoremstyle{definition} 


\newcommand{\Rdd}{{\mathbb{R}^{N}}}

\newcommand{\R}{{\mathbb{R}}}

\newcommand{\eps}{\varepsilon}

\newcommand{\nphi}{\mathcal{B}\varphi}

\newcommand{\tW}{\tilde{W}}

\newcommand{\al}{\alpha}
\newcommand{\lm}{\lambda}

\newcommand{\vsp}[1]{\vspace{0.5cm}\par}

\newcommand{\mres}{\mathbin{\vrule height 1.6ex depth 0pt width
0.13ex\vrule height 0.13ex depth 0pt width 1.3ex}}
\newcommand{\margnote}[1]{
\ifthenelse{\boolean{shownotes}}%
{\marginpar{\raggedright\tiny\texttt{#1}}}%
{}%
}
\newcommand{\hole}[1]{
\ifthenelse{\boolean{shownotes}}%
{\begin{center} \fbox{ \rule {.25cm}{0cm}
\rule[-.1cm]{0cm}{.4cm} \parbox{.85\textwidth}{\begin{center}
\texttt{#1}\end{center}} \rule {.25cm}{0cm}}\end{center}}
{}
}

\numberwithin{equation}{section}

\begin{document}

\title[$\mathcal{A}$-quasiconvexity, G\aa rding inequalities and applications]{$\mathcal{A}$-quasiconvexity, G\aa rding inequalities and applications in PDE constrained problems in dynamics and statics}

\author[K. Koumatos]{Konstantinos Koumatos}
\address[Konstantinos Koumatos]{\newline
Department of Mathematics, University of Sussex\\ Pevensey 2 Building,
Falmer,
Brighton, BN1 9QH, UK}
\email[]{\href{k.koumatos@sussex.ac.uk}{k.koumatos@sussex.ac.uk}}

\author[A. Vikelis]{Andreas Vikelis}
\address[Andreas Vikelis]{\newline
Department of Mathematics, University of Sussex\\ Pevensey 2 Building,
Falmer,
Brighton, BN1 9QH, UK}
\email[]{\href{a.vikelis@sussex.ac.uk}{a.vikelis@sussex.ac.uk}}

\begin{abstract}
 A G\aa rding-type inequality is proved for a quadratic form associated to  $\mathcal{A}$-quasiconvex functions. This quadratic form appears as the relative entropy in the theory of conservation laws and it is related to the Weierstrass excess function in the calculus of variations. The former provides weak-strong uniqueness results, whereas the latter has been used to provide sufficiency theorems for local minimisers. Using this new G\aa rding inequality we provide an extension of these results to PDE constrained problems in dynamics and statics under $\mathcal{A}$-quasiconvexity assumptions. The application in statics improves existing results by proving uniqueness of $L^p$ local minimisers in the classical $\mathcal{A}={\rm curl}$ case.
 \vspace{0.1cm}

\noindent \textsc{Keywords}: $\mathcal{A}$-quasiconvexity, PDE constraints, G\aa rding inequality, conservation laws, relative entropy, Weierstrass problem, local minimisers \vspace{0.1cm}

\noindent {MSC2010}: 35L65, 35G05, 35Q74, 49K20
\end{abstract}

\maketitle

\section{Introduction}

In the setting of continuum mechanics and the theory of electromagnetism, often problems are constrained by linear partial differential equations (PDEs), that is their solutions are constrained to lie in the kernel of a certain differential operator $\mathcal{A}$. The prototypical example arises in elasticity. In elastostatics, one is concerned with the minimisation of the functional
\begin{equation}\label{eq:intro0}
\int_\Omega W(\nabla y)
\end{equation}
and thus solutions $U = \nabla y$ are constrained by the operator $\mathcal{A} = {\rm curl}$. Similarly, in dynamics, the equations of elasticity can be written in the form of the first-order system of conservation laws
\begin{align*}
   	\partial_t v - {\rm div} DW(F)&=0, \\
   	\partial_t F - \nabla v&=0,\\
   	{\rm curl}\,F&=0.
\end{align*}
Note that the last equation constrains solutions $F$ to be gradients and it is satisfied as long as the initial data are curl-free. More generally, one may consider problems constrained by other differential operators $\mathcal{A}$, leading to the study of minimisation problems of the form
\begin{equation}\label{eq:intro1}
\mathcal{W}(U) = \int W(U),\quad\mathcal{A}U = 0
\end{equation}
and systems of conservation laws
\begin{equation}\label{eq:intro2}
\partial_t U + {\rm div} f(U) = 0
\end{equation}
with involutions $\mathcal{A}$, i.e. with the property that $\mathcal{A}U(t,\cdot) = 0$ whenever $\mathcal{A}U(0,\cdot) = 0$. PDE constrained problems of the above form, and others, have been studied extensively. Indeed, the theory of compensated compactness developed by Murat and Tartar originated within this $\mathcal{A}$-free context \cite{murat1, murat2, murat3}. 

In particular, they understood that quadratic forms that are convex in certain directions associated to $\mathcal{A}$ are lower semicontinuous along $\mathcal{A}$-free, weakly converging sequences. This set of directions, $\Lambda_{\mathcal{A}}$, is referred to as the wave cone of $\mathcal{A}$, see Section \ref{sec:prel}, and contains the amplitudes along which ellipticity is lost. For example, for vectorial problems and $\mathcal{A} = {\rm curl}$, the wave cone consists of rank-one matrices and rank-one convexity becomes the relevant convexity condition. Note that rank-one convexity for quadratic forms is equivalent to the less transparent notion of quasiconvexity which is itself equivalent to the weak lower semicontinuity of \eqref{eq:intro0}, see \cite{Dacorogna}.

Indeed, for problems of the form \eqref{eq:intro1}, an appropriate extension of quasiconvexity, called $\mathcal{A}$-quasiconvexity, was introduced by Dacorogna \cite{DacQC} and shown to be equivalent to the weak lower semicontinuity of \eqref{eq:intro1}, in \cite{DacQC, Fon}. 
More recently, and following the work in \cite{Guido}, a developing body of literature has emerged on PDE constrained problems, including results on appropriate modifications of BV spaces, lower semicontinuity, Young measures, Sobolev-type inequalities,  and others \cite{Arroyo19,  Anna2, breit19, BogEmb, BogKris, Potentials, Anna1, skipper19}.

In the context of dynamics, Dafermos in \cite{Daf} studied the system of conservation laws \eqref{eq:intro2} endowed with involutions where $\mathcal{A} = \sum_\alpha A_\alpha \partial_\alpha$ was assumed to be a first order operator. 
He showed that if the involutions are complete (see \cite{Daf13}) system \eqref{eq:intro2} becomes hyperbolic and constructed a first order potential operator $\mathcal{B}= \sum_\alpha B_\alpha\partial_\alpha$ such that $U=\mathcal{B}W$ whenever $\mathcal{A}U=0$. Through this potential $\mathcal{B}$, he extracted a Poincar\'e type inequality for $\mathcal{A}$-free functions which played a decisive role in the proof of his main tool: a G\aa rding-type inequality for the quadratic form 
\begin{align}\label{eq:intro_quadratic_form}
\eta(U|\bar U) & = \eta(U) - \eta(\bar U) - D\eta(\bar U)\cdot (U - \bar U)\nonumber \\
&= \int_0^1(1-t)D^2\eta\left(\bar U + t(U - \bar U)\right)\,dt (U - \bar U)\cdot (U - \bar U),
\end{align}
associated to the $\Lambda_\mathcal{A}$-convex entropy $\eta$. Nevertheless, this G\aa rding inequality required that the weak solutions, assumed bounded and in the space $BV$, satisfy an extra assumption of small local oscillations. Then, naturally, it leads to stability and weak-strong uniqueness results for such entropic weak solutions. In \cite{Spirito} and the case of elasticity, it was understood that the crucial G\aa rding inequality and the subsequent weak-strong uniqueness result can be proved without the assumption of small oscillations, provided the entropy instead satisfies the stronger condition of quasiconvexity\footnote{J. Kristensen and J. Campos Cordero \cite{KrisJCC} have obtained a similar G\aa rding inequality in the ${\rm curl}$-free setting following a different approach.}.

More generally, G\aa rding inequalities have been very important, for example, to establish existence, uniqueness and regularity for elliptic problems, see \cite{vich64, garding53, BVmin, mullerNotes, necas67, sil17}. Crucially, a G\aa rding-type inequality for the quadratic form in \eqref{eq:intro_quadratic_form} also appeared in the resolution of the so-called Weierstrass problem in the vectorial calculus of variations, i.e. the problem of finding (quasiconvexity based) sufficient conditions for a map $\bar y$ to be a strong (or $L^p$) local minimiser of \eqref{eq:intro0}, see Section \ref{sec:min}. This was indirectly employed in the original proof of \cite{GM09} and more explicitly in the subsequent proofs in \cite{JudithR, JudithKostas} which seek the positivity of
\[
\int W(\nabla \bar y + \nabla\varphi|\nabla \bar y),
\]
related to a G\aa rding-type inequality for the quadratic form $W(\cdot|\cdot)$. In this context, this quadratic form is known as the Weierstrass excess or E-function, see \cite{JudithKostas, GM09} for functionals depending on lower order terms.

In the present work, we consider general constant coefficient, linear differential operators $\mathcal{A}$ with constant rank and, for $p\geq 2$, prove the G\aa rding inequality, see Theorem \ref{theorem:2}, 
\begin{equation}\label{eq:intro_garding}
   \int_Q W(\bar U + \psi|\bar U) \gtrsim \int_Q \left(|\psi|^2 + |\psi|^p\right) - \|\psi\|_{W^{-1,2}}^2 - \|\psi\|_{W^{-1,p}}^p,
\end{equation}
for $\mathcal{A}$-quasiconvex $W$, $\bar U\in C^0(Q)$, and $\psi \in L^p(Q)$, $\mathcal{A}$-free and zero-average on the unit torus $Q$. This is the content of Section \ref{sec:garding} where we also prove Lemma \ref{Dec}, an extension of the Decomposition Lemma in \cite{JudithR}, see also \cite{KrisDec}, in the $\mathcal{A}$-free setting. 

Moreover, in Section \ref{sec:involutions}, we employ the G\aa rding inequality to prove stability and weak-strong uniqueness results for dissipative solutions of conservation laws with involutions under the assumption that the entropy is $\mathcal{A}$-quasiconvex. We note that no restrictions on the order of $\mathcal{A}$ are required and as in \cite{Daf} weak solutions need only be bounded but with no additional assumptions on the local oscillations. 
In Section \ref{sec:min}, we study a generalisation of the Weierstrass theory in this $L^p$, $\mathcal{A}$-free setting, see Theorem \ref{thm:min}, which comes naturally from the proof of Theorem \ref{theorem:2}. We note that our result entails the uniqueness of local minimisers in a quantitative way. This adds to the existing results in the case $\mathcal{A} = {\rm curl}$ and bounded domains. Indeed, in Corollary \ref{cor:unique}, we prove this uniqueness result for $\mathcal{A} = {\rm curl}$, bounded domains, and free boundary conditions, under the additional assumption of quasiconvexity at the boundary as in \cite{ JudithR, JudithKostas, GM09}. Section \ref{sec:prel}, collects all essential definitions and known results in the $\mathcal{A}$-free setting that are used in the following sections. 

\section{Preliminaries}\label{sec:prel}

\subsection{Constant Rank Linear Operators}\label{sb11}
 
For each d-multi index $\alpha$, let us consider a collection of linear operators $A_{\alpha}\in {\rm Lin}(\mathbb{R}^N,\mathbb{R}^M)$. We define a homogeneous k-th order linear operator $\mathcal{A}$ by
\begin{equation}\label{eq:A}
\mathcal{A}\psi:=\sum_{|\alpha|=k}A_{\alpha}\partial^{\alpha}\psi,\,\, \psi:Q\subseteq\mathbb{R}^d\to \mathbb{R}^N,
\end{equation}
where $Q=(0,1)^d$ is identified with the unit torus $\mathbb{T}^d$. We think of $\mathcal{A}$ as a polynomial in $\partial$ and so we write its principal symbol as
\[
\mathbb{A}:\mathbb{R}^d\to {\rm Lin}(\mathbb{R}^N,\mathbb{R}^M),\,\,\,\,\,\,\, \mathbb{A}(\xi)=(2\pi i)^k\sum_{|\alpha|=k}A_{\alpha}\xi^{\alpha}.
\]
The wave cone associated with $\mathcal{A}$ is denoted by 
\[
\Lambda_\mathcal{A}=\bigcup_{\quad\xi\in S^{d-1}}{\rm ker}\,\mathbb{A}(\xi),
\]
and contains the amplitudes $\lambda\in\mathbb{R}^N$ along which the system fails to be elliptic where ellipticity means that ${\rm ker}\,\mathbb{A}(\xi) = \{0\}$ for all $\xi\neq 0$. Indeed, $\lambda\in\Lambda_{\mathcal{A}}$ if and only if the operator $\mathcal{R}_\lambda(v):=\mathcal{A}(\lambda v)$ is not elliptic, where $v\in C^{\infty}(Q;\mathbb{R})$. Moreover, we assume throughout that the linear differential operator $\mathcal{A}$ has the constant rank property, i.e. there exists $r\in\mathbb{N}$ such that 
\[
{\rm rank}\;\mathbb{A}(\xi)=r \text{ for all }\xi\in S^{d-1}.
\]
The constant rank assumption, first introduced in the context of compensated compactness by Murat \cite{murat1}, ensures the smoothness of the projection mapping
\[\mathbb{P}:\mathbb{R}^d\setminus\{0\}\to {\rm Lin}(\mathbb{R}^N,\mathbb{R}^N),\,\,\,\xi\mapsto {\rm Proj}_{{\rm ker}\mathbb{A}(\xi)},
\]
and thus makes tools of pseudo-differential calculus available. Using some of these tools together with a result of Decell \cite{decell}, Rai{\c{t}}{\u{a}} in \cite{Potentials} gave a new characterisation for constant rank operators:
\begin{theorem}\label{thm:bogpot}
Let $\mathcal{A}$ be a linear homogeneous differential operator with constant coefficients. Then $\mathcal{A}$ has constant rank if and only if there exists a linear homogeneous differential operator $\mathcal{B}$ with constant coefficients such that 
\[
{\rm im}\,\mathbb{B}(\xi)= {\rm ker}\,\mathbb{A}(\xi) \text{ for 
 all }\xi\in\mathbb{R}^d\setminus\{0\}.
\]
We write, for some $B_{\alpha}\in {\rm Lin}(\mathbb{R}^{M'},\mathbb{R}^N)$,
\begin{equation}\label{eq:B}
\mathcal{B}\varphi:=\sum_{|\alpha|=l}B_{\alpha}\partial^{\alpha}\varphi,\,\,\, \varphi:Q\subseteq\mathbb{R}^d\to \mathbb{R}^{M'}.
\end{equation}
\end{theorem}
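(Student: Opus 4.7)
The statement is an equivalence, so I would prove both directions, with the easy implication first.

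\emph{Sufficiency (existence of $\mathcal{B}$ implies constant rank).} Assume $\mathrm{im}\,\mathbb{B}(\xi)=\ker\mathbb{A}(\xi)$ for all $\xi\neq 0$. Rank–nullity then gives $\mathrm{rank}\,\mathbb{A}(\xi)+\mathrm{rank}\,\mathbb{B}(\xi)=N$. Since the entries of $\mathbb{A}$ and $\mathbb{B}$ are polynomials in $\xi$, both rank functions are lower semicontinuous on $\mathbb{R}^d\setminus\{0\}$ (the sublevel sets $\{\mathrm{rank}<r\}$ cut out by the vanishing of all $r\times r$ minors are Zariski-closed). Because their sum is the constant $N$, each is also upper semicontinuous, hence continuous, hence locally constant. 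Connectedness of $\mathbb{R}^d\setminus\{0\}$ (trivial for $d=1$, standard for $d\geq 2$) forces both ranks to be globally constant, so $\mathcal{A}$ has constant rank.

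\emph{Necessity (constant rank implies existence of $\mathcal{B}$).} The strategy, following Rai\c{t}\u{a}, is to construct the symbol $\mathbb{B}(\xi)$ from the Moore–Penrose pseudo-inverse of $\mathbb{A}(\xi)$. Let $\mathbb{P}(\xi):=I-\mathbb{A}(\xi)^{\dagger}\mathbb{A}(\xi)$ denote the orthogonal projection onto $\ker\mathbb{A}(\xi)$, which under the constant rank assumption is smooth and $0$-homogeneous on $\mathbb{R}^d\setminus\{0\}$. Decell's theorem \cite{decell} expresses $\mathbb{A}(\xi)^{\dagger}$ algebraically through the coefficients of the characteristic polynomial of $\mathbb{A}(\xi)^{*}\mathbb{A}(\xi)$; this yields a decomposition $\mathbb{P}(\xi)=R(\xi)/q(\xi)$ with $R$ a matrix-valued polynomial and $q$ a scalar polynomial that does not vanish on $\mathbb{R}^d\setminus\{0\}$. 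Setting $\mathbb{B}(\xi):=R(\xi)$ and declaring $\mathcal{B}$ to be the constant-coefficient operator with this principal symbol, one has $\mathrm{im}\,\mathbb{B}(\xi)=\mathrm{im}\,\mathbb{P}(\xi)=\ker\mathbb{A}(\xi)$.

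The step I expect to require most care is verifying that the symbol $\mathbb{B}(\xi)$ produced by this recipe is indeed \emph{polynomial and homogeneous} in $\xi$, so that it corresponds to a genuine linear homogeneous differential operator $\mathcal{B}$ as in \eqref{eq:B}. Here one exploits the $k$-homogeneity of $\mathbb{A}(\xi)$: the entries of $\mathbb{A}(\xi)^{*}\mathbb{A}(\xi)$ are $2k$-homogeneous, so the coefficients appearing in Decell's formula are themselves homogeneous of matching total degrees, and by balancing these degrees one may arrange $R$ and $q$ to be homogeneous of the same degree. Non-vanishing of $q$ on $\mathbb{R}^d\setminus\{0\}$ (a direct consequence of constant rank, since it equals, up to sign, the leading nontrivial coefficient of the characteristic polynomial of $\mathbb{A}^{*}\mathbb{A}$) ensures that $\mathrm{im}\,R(\xi)=\mathrm{im}\,\mathbb{P}(\xi)$ on the whole punctured space and completes the construction.
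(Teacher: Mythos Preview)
The paper does not actually prove Theorem~\ref{thm:bogpot}; it is quoted from Rai\c{t}\u{a}'s work \cite{Potentials} (with Decell's result \cite{decell} as the key algebraic ingredient), and no argument is given in the present article. Your sketch reconstructs precisely the Rai\c{t}\u{a} approach: the easy direction via rank--nullity and lower semicontinuity of the rank, and the hard direction via Decell's algebraic formula for the Moore--Penrose inverse applied to $\mathbb{A}(\xi)^{*}\mathbb{A}(\xi)$, which under constant rank produces a polynomial, homogeneous symbol $\mathbb{B}(\xi)$ whose image is $\ker\mathbb{A}(\xi)$. So there is nothing to compare against in the paper itself, and your outline is faithful to the cited source; the caveat you flag about arranging homogeneity of $R$ and $q$ is exactly the technical point that needs to be checked in Rai\c{t}\u{a}'s construction.
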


We refer to the potential operator $\mathcal{B}$ simply as the potential of $\mathcal{A}$ although no meaningful notion of uniqueness is known, see \cite{NL} for a discussion. Lemma 3 in \cite{Potentials} implies also that the operator $\mathcal{B}$ has constant rank. 

\subsection{Sobolev estimates}

 Henceforth, for a function $\psi\in L^p(Q)$ we say that ``$\mathcal{A}\psi=0$ in $Q$" in the sense of distributions on the torus, i.e.
\begin{equation}\label{eqDC}
-\int_{Q}\psi\cdot\mathcal{A}^*\phi=0\,\,\,\text{for all}\,\,\,\phi\in C^\infty(Q), 
\end{equation}
where $C^\infty(Q)$ consists of smooth, $Q$-periodic functions and $\mathcal{A}^*$ is the adjoint operator. We call $\mathcal{A}$-free any function satisfying \eqref{eqDC}.

In this section, we present some fundamental estimates in Sobolev spaces for a class of primitive functions which we refer to as $\mathbb{B}^\dagger$-primitives, constructed in \cite{Potentials}. These estimates are necessary to replace Poincar\'e-type inequalities which we particularly require when introducing cut-offs. We note that these estimates may fail for 
general primitives.

\begin{remark}\label{remark1}
Throughout, $W^{l,q}(Q)$ denotes the closure of $C^\infty(Q)$ in the $W^{l,q}$ norm.
The space $W^{-l,p}(Q)$ is its dual and its norm is equivalent to 
\[
\Big\|\mathcal{F}^{-1}\Big[ \frac{\hat{\psi}(\xi)}{(1+|\xi|^2)^{l/2}}\Big] \Big\|_{L^p(Q)}.
\]
Note that when $\int_Q u=0$ this norm is equivalent to the norm
\[
\Big\|\mathcal{F}^{-1}\Big[ \frac{\hat{\psi}(\xi)}{|\xi|^l} \Big] \Big\|_{L^p(Q)}
\]
since the $Fourier$ multipliers $(1+|\xi|^2)^{-l/2}$ and $|\xi|^{-l}$ are comparable for $\xi\in\mathbb{Z}^d\setminus\{0\}$.
\end{remark}

\begin{lemma} \label{Ptype}
Let $\mathcal{A}$ and $\mathcal{B}$ as in Theorem \ref{thm:bogpot}. Then for all $\mathcal{A}$-free functions $\psi\in L^p(Q)$ with $\int_Q\psi=0$, there exists $\varphi\in W^{l,p}(Q)$ such that

\begin{enumerate}[label=(\roman*)]
    \item $\psi=\mathcal{B}\varphi$ ;
    \item $\|\varphi\|_{L^p(Q)}\leq C \|\psi\|_{W^{-l,p}(Q)}$ ;
    \item $\|\varphi\|_{W^{l,p}(Q)}\leq C \|\psi\|_{L^p(Q)}$;
    \item $\|\varphi\|_{W^{l-i,p}(Q)}\leq C \|\psi\|_{W^{-1,p}(Q)}$ for all $i=1,..,l$.
\end{enumerate}

We will call $\varphi$ the $\mathbb{B}^\dagger$-primitive of $\psi$.
\end{lemma}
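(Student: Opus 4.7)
The plan is to construct $\varphi$ explicitly as a Fourier series on $Q$ using the Moore--Penrose pseudoinverse $\mathbb{B}^{\dagger}(\xi)$ of the symbol $\mathbb{B}(\xi)$. For $\xi\in\mathbb{Z}^d\setminus\{0\}$ I would set $\hat\varphi(\xi):=\mathbb{B}^{\dagger}(\xi)\hat\psi(\xi)$ and $\hat\varphi(0):=0$. The identity $\mathcal{A}\psi=0$ amounts to $\mathbb{A}(\xi)\hat\psi(\xi)=0$ for every $\xi\in\mathbb{Z}^d$, and Theorem \ref{thm:bogpot} gives $\hat\psi(\xi)\in\ker\mathbb{A}(\xi)=\mathrm{im}\,\mathbb{B}(\xi)$. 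The defining property of the pseudoinverse then yields $\mathbb{B}(\xi)\hat\varphi(\xi)=\mathbb{B}(\xi)\mathbb{B}^{\dagger}(\xi)\hat\psi(\xi)=\hat\psi(\xi)$, which, read on the Fourier side, is exactly $\mathcal{B}\varphi=\psi$, establishing (i).

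The Sobolev estimates (ii)--(iv) will all be derived from the same Mikhlin multiplier analysis. The key observation is that $\mathbb{B}^{\dagger}$ is smooth on $\mathbb{R}^d\setminus\{0\}$ and homogeneous of degree $-l$: homogeneity is automatic since $\mathbb{B}(\xi)$ is $l$-homogeneous, while smoothness uses the constant rank of $\mathcal{B}$ noted after Theorem \ref{thm:bogpot} together with Decell's polynomial representation of the pseudoinverse (cf.\ \cite{decell,Potentials}). Consequently the symbol $m(\xi):=|\xi|^{l}\mathbb{B}^{\dagger}(\xi)$ is $0$-homogeneous and $C^{\infty}$ on $S^{d-1}$, so it satisfies the Mikhlin--H\"ormander condition and defines a bounded operator on $L^p(Q)$ for $1<p<\infty$ via the periodic multiplier theorem.

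With this in hand, the bounds are a matter of bookkeeping using the equivalent norms recorded in Remark \ref{remark1}. For (iii), $\|\varphi\|_{W^{l,p}(Q)}$ is comparable to $\|\mathcal{F}^{-1}[|\xi|^{l}\hat\varphi]\|_{L^p(Q)}$ and the identity $|\xi|^{l}\hat\varphi(\xi)=m(\xi)\hat\psi(\xi)$ yields the bound by $\|\psi\|_{L^p(Q)}$. For (ii), write $\hat\varphi(\xi)=m(\xi)\cdot|\xi|^{-l}\hat\psi(\xi)$ to control $\|\varphi\|_{L^p(Q)}$ by $\|\mathcal{F}^{-1}[|\xi|^{-l}\hat\psi]\|_{L^p(Q)}\simeq\|\psi\|_{W^{-l,p}(Q)}$. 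For (iv), factor $|\xi|^{l-i}\hat\varphi(\xi)=m(\xi)\cdot|\xi|^{-i}\hat\psi(\xi)$ to obtain $\|\varphi\|_{W^{l-i,p}(Q)}\lesssim\|\psi\|_{W^{-i,p}(Q)}$, and then invoke the embedding $W^{-1,p}(Q)\hookrightarrow W^{-i,p}(Q)$, which holds on the torus because on the non-zero frequencies $\{|\xi|\geq 1\}$ the symbol $|\xi|^{1-i}$ is itself a bounded Mikhlin multiplier for $i\geq 1$.

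The principal obstacle I foresee is the smoothness and $(-l)$-homogeneity of $\mathbb{B}^{\dagger}$ at the symbol level: without the constant rank hypothesis the rank of $\mathbb{B}(\xi)$ could jump along rays and $\mathbb{B}^{\dagger}$ would fail to be a smooth symbol, breaking the entire Mikhlin multiplier argument. Once this smoothness is in place, the proof reduces to essentially mechanical symbol arithmetic.
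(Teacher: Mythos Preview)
Your proposal is correct and follows essentially the same route as the paper: define $\hat\varphi(\xi)=\mathbb{B}^{\dagger}(\xi)\hat\psi(\xi)$ for $\xi\neq 0$, verify (i) via $\ker\mathbb{A}(\xi)=\mathrm{im}\,\mathbb{B}(\xi)$, and derive all the bounds from Mikhlin applied to the $0$-homogeneous, smooth multiplier $|\xi|^{l}\mathbb{B}^{\dagger}(\xi)$. The paper organises (iii) and (iv) slightly differently---it reduces $\|\varphi\|_{W^{l,p}}$ to $\|\nabla^{l}\varphi\|_{L^p}$ via the Poincar\'e inequality on zero-mean periodic functions and then applies Mikhlin, whereas you invoke the Bessel-potential characterisation of $W^{l,p}$ directly---but this is only a cosmetic difference in bookkeeping.
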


Although $(ii)$, $(iii)$ and $(iv)$ follow from the construction in \cite{Potentials}, a proof is not explicitly given. Hence, for completeness, we provide a proof here.

\begin{proof}
 We prove the result for $\psi\in C^{\infty}(Q)$ and the general case follows by approximation. 
 Indeed, $(i)$ is known from \cite[Lemma 2]{Potentials}, where the primitive function $\varphi\in C^\infty(Q)$ is constructed as 
 \[
 \varphi(x)=\sum_{\xi\neq 0}\mathbb{B}^\dagger(\xi) \widehat{\psi}(\xi) e^{2\pi i\xi\cdot x},
 \]
and $\mathbb{B}^\dagger(\cdot)$ is the pseudo?inverse of $\mathbb{B}(\cdot)$ which is itself smooth whenever $\mathcal{B}$ is, see \cite{NL}. This justifies our adopted terminology $\mathbb{B}^\dagger$-primitive.

For $(ii)$, since $\mathbb{B}^\dagger(\cdot)$ is smooth and $(-l)$-homogeneous, the operator $\mathbb{B}^\dagger(\xi/|\xi|)$ is $0$-homogeneous and smooth, and thus a Fourier multiplier, see \cite[Proposition 2.13]{Fon}. Hence, by the Mikhlin multiplier theorem and Remark \ref{remark1}
\begin{align*}
\|\varphi\|_{L^p(Q)}=\Big\|\mathcal{F}^{-1}\Big[ \frac{1}{|\xi|^l}\mathbb{B}^\dagger(\frac{\xi}{|\xi|})\widehat{\psi}(\xi) \Big] &\Big\|_{L^p}
\lesssim \Big\|\mathcal{F}^{-1}\Big[ \frac{1}{|\xi|^l}\widehat{\psi}(\xi) \Big] \Big\|_{L^p}=\|\psi\|_{W^{-l,p}(Q)},
\end{align*}

For $(iii)$, by applying the Poincar\'e inequality for all the derivatives of $\varphi$, since $\int_Q \nabla^i\varphi=\widehat{\nabla^i\varphi}(0)=0$, we have that $\|\nabla^{l-i}\varphi\|_{L^p}\lesssim \|\nabla^l\varphi\|_{L^p}$ for all $i$=0,..,$l$ and so $\|\varphi\|_{W^{l,p}}\lesssim \|\nabla^l\varphi\|_{L^p}$.
Then, by differentiating $\varphi$ we obtain 
\[
\nabla^l\varphi(x)=\sum_{\xi\neq 0}\mathbb{B}^\dagger(\xi) \widehat{\psi}(\xi) e^{2\pi i\xi\cdot x}\otimes \xi^{\otimes l},
\]
which is a  $0$-homogeneous multiplier of $\psi$, since $\mathbb{B}^\dagger(\cdot)$ is $(-l)$-homogeneous. Hence, by Mikhlin's multiplier theorem, we find that 
\[
\|\nabla^l\varphi\|_{L^p(Q)}=\Big\|\mathcal{F}^{-1}\Big[ \mathbb{B}^\dagger(\frac{\xi}{|\xi|})\widehat{\psi}(\xi) \Big] \Big\|_{L^p}\lesssim \Big\|\mathcal{F}^{-1}\Big[ \widehat{\psi}(\xi) \Big] \Big\|_{L^p}=\|\mathcal{B}\varphi\|_{L^p(Q)}.
\]

For $(iv)$, by working similarly to $(iii)$ we prove that 
\begin{align*}
\|\nabla^{l-1}\varphi\|_{L^p(Q)}\lesssim  \Big\|\mathcal{F}^{-1}\Big[ \frac{1}{|\xi|}\widehat{\psi}(\xi) \Big] \Big\|_{L^p(Q)}=\|\psi\|_{W^{-1,p}(Q)}
\end{align*}
and since $\|\nabla^{l-i}\varphi\|_{L^p}\lesssim \|\nabla^{l-1}\varphi\|_{L^p}$ for $i$=1,..,$l$ we conclude the proof.
\end{proof}

\subsection{$\mathcal{A}$-quasiconvexity}\label{sec:AQ}
Here we recall the definition of $\mathcal{A}$-quasiconvexity and collect results that are used in the sequel. The following definition is due to Fonseca and M\"{u}ller in \cite{Fon}.
\begin{definition}\label{eq: FMdef}
A locally bounded, Borel function $W:\mathbb{R}^N\to\mathbb{R}$ is $\mathcal{A}$-quasiconvex at $\lambda\in\mathbb{R}^N$ if 
\[
\int_Q \big[W(\lambda+\psi(x))-W(\lambda)\big]dx\geq 0,
\]
for all $\psi\in C^\infty(Q)$ such that $\mathcal{A}\psi=0$ and $\int_Q\psi=0$.
\end{definition}
It is proved in \cite{Potentials} that the above definition can equivalently be expressed over arbitrary domains and compactly supported test functions, i.e. it coincides with Dacorogna's definition of $\mathcal{A}$-$\mathcal{B}$ quasiconvexity \cite{Dacorogna} given below. 
\begin{definition}
Let $\Omega\subseteq\mathbb{R}^d$ be a non-empty open subset. A locally bounded, Borel function $W:\mathbb{R}^N\to \mathbb{R}$ is $\mathcal{A}$-quasiconvex at $\lambda\in\mathbb{R}^N$ if and only if
\[
\int_\Omega \big[W(\lambda+\mathcal{B}\varphi(x))-W(\lambda)\big]dx\geq 0,\mbox{ for all }\varphi\in C^\infty_c(\Omega).
\]
\end{definition}

Henceforth, we assume that $W$ has $p$-growth, i.e. $|W(z)|\leq c(1+|z|^p)$.
Then, by density, the above definitions can also be expressed with test functions in $L^p(Q)$ and $W^{l,p}_0(\Omega)$, respectively, where $W^{l,p}_0(\Omega)$ denotes the closure of $C_c^\infty(\Omega)$ in the $W^{l,p}$-norm.

The results presented in this paper, require a strengthened version of the quasiconvexity condition which we now introduce. Let $p\geq 2$ and for $k\in\mathbb{N}$ define the auxiliary function $V:\mathbb{R}^k\to\mathbb{R}$ as
\begin{align}\label{eq:auxf}
    V(z)&:=(|z|^2+|z|^p)^{1/2},\,\,\,z\in\mathbb{R}^k.
\end{align}
If there exists a constant $c_0>0$ such that 
\[
\int_\Omega \big[W(\lambda+\mathcal{B}\varphi(x))-W(\lambda)\big]dx\geq c_0\int_\Omega|V(\mathcal{B}\varphi(x))|^2dx,
\]
for all $\varphi\in W^{l,p}_0(\Omega)$, we say that $W$ is strongly $\mathcal{A}$-quasiconvex at $\lambda\in\mathbb{R}^N$. Equivalently, $W$ is strongly $\mathcal{A}$-quasiconvex at $\lambda\in\mathbb{R}^N$ if
 \[
\int_Q \big[W(\lambda+\psi(x))-W(\lambda)\big]dx\geq c_0\int_Q|V(\psi(x))|^2dx, \label{def:sAqc}
\]
for all $\psi\in L^p(Q)$ with $\mathcal{A}\psi=0$ and $\int_Q\psi=0$. We say that $W$ is (strongly) $\mathcal{A}$-quasiconvex, if it is (strongly) $\mathcal{A}$-quasiconvex at $\lambda$ for all $\lambda\in\mathbb{R}^N$. 

Note that $\mathcal{A}$-quasiconvex functions are not in general continuous as, unlike quasiconvex functions, they are not generally separately convex. However, the condition ${\rm span}\Lambda_{\mathcal{A}}=\mathbb{R}^N$ recovers this loss of separate convexity and then 
 \[
|W(z_1)-W(z_2)|\leq C(1+|z_1|^{p-1}+|z_2|^{p-1})|z_1-z_2|,\mbox{ for all }z_1,z_2\in\mathbb{R}^N.
 \]
The proof can be found in \cite[Lemma 4.4]{NL} and it is based on \cite[Lemma 2.3]{spJan}.

We end this section with a remark on quadratic forms. It is well-known that for these functions rank-one convexity implies quasiconvexity. Similarly, it is not hard to verify that the same holds in the $\mathcal{A}$-quasiconvex setting.

\begin{lemma}\label{lemma:quadratic2}
  Let $M\in\mathbb{R}^{N\times N}$ be a symmetric matrix and define the function $f(\xi)=M\xi\cdot\xi,$ for all $\xi\in\mathbb{R}^N$ . Then if $f$ is convex on the wave cone $\Lambda_\mathcal{A}$, it is also $\mathcal{A}$-quasiconvex.
\end{lemma}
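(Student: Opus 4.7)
The plan is to prove $\mathcal{A}$-quasiconvexity of $f$ directly from Definition \ref{eq: FMdef} via Plancherel's identity on the torus. First I would expand, using the symmetry of $M$,
\[
f(\lambda+\psi(x))-f(\lambda) = 2M\lambda\cdot\psi(x)+M\psi(x)\cdot\psi(x).
\]
Integrating over $Q$ and using that $\int_Q\psi=0$ kills the linear term, so the claim reduces to showing
\[
\int_Q M\psi(x)\cdot\psi(x)\,dx\ \ge\ 0
\]
for every $\psi\in C^\infty(Q)$ with $\mathcal{A}\psi=0$ and zero average.

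Next I would pass to Fourier series: writing $\psi(x)=\sum_{\xi\neq 0}\widehat{\psi}(\xi)\,e^{2\pi i\xi\cdot x}$ (the $\xi=0$ mode vanishes by the zero-average assumption), the constraint $\mathcal{A}\psi=0$ translates, using the definition of the principal symbol, into the pointwise condition $\mathbb{A}(\xi)\widehat{\psi}(\xi)=0$ for every $\xi\in\mathbb{Z}^d\setminus\{0\}$. Since $\mathbb{A}$ is $k$-homogeneous, $\ker\mathbb{A}(\xi)=\ker\mathbb{A}(\xi/|\xi|)$, and therefore each $\widehat{\psi}(\xi)$ (as a complex vector) belongs to the complexification of a real subspace contained in $\Lambda_{\mathcal{A}}$. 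Splitting $\widehat{\psi}(\xi)=a(\xi)+ib(\xi)$ into real and imaginary parts, one has $a(\xi),b(\xi)\in\ker\mathbb{A}(\xi)\subset\Lambda_{\mathcal{A}}$.

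Then Plancherel together with the symmetry of $M$ yields
\[
\int_Q M\psi\cdot\psi\,dx \;=\; \sum_{\xi\neq 0}\bigl[Ma(\xi)\cdot a(\xi)+Mb(\xi)\cdot b(\xi)\bigr] \;=\; \sum_{\xi\neq 0}\bigl[f(a(\xi))+f(b(\xi))\bigr].
\]
Finally I would invoke the hypothesis that $f$ is convex on $\Lambda_{\mathcal{A}}$: since $f$ is a quadratic form satisfying $f(tv)=t^{2}f(v)$, convexity restricted to the ray $\{tv:t\in\mathbb{R}\}\subset\Lambda_{\mathcal{A}}$ forces $f(v)\ge 0$ for every $v\in\Lambda_{\mathcal{A}}$. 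Applied with $v=a(\xi)$ and $v=b(\xi)$, every summand above is non-negative, whence $\int_Q M\psi\cdot\psi\ge 0$, concluding the argument.

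The only subtle point, which I would flag as the one requiring care rather than being a genuine obstacle, is the identification of what "convex on the wave cone" means for a quadratic form and the correct handling of complex Fourier coefficients: since $\ker\mathbb{A}(\xi)$ is a real subspace contained in $\Lambda_{\mathcal{A}}$, one must split into real and imaginary parts before appealing to the one-dimensional reduction $f(tv)=t^2 f(v)\ge 0$. Density then extends the inequality from $C^\infty(Q)$ test functions to $L^p(Q)$, as already remarked after Definition~\ref{eq: FMdef}.
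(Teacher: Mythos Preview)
Your argument is correct and is precisely the standard Plancherel-based proof that the paper is pointing to when it says the argument is ``almost identical to $\mathcal{A}={\rm curl}$, see \cite{Dacorogna}''. The only content of the general case beyond ${\rm curl}$ is exactly the observation you make, namely that the Fourier coefficients lie in the complexification of $\ker\mathbb{A}(\xi/|\xi|)\subset\Lambda_{\mathcal{A}}$ so that the real and imaginary parts can be tested separately against the $\Lambda_{\mathcal{A}}$-convexity hypothesis.
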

The proof of Lemma \ref{lemma:quadratic2} is almost identical to $\mathcal{A} = {\rm curl}$, see \cite{Dacorogna}.

\section{A G\aa rding-type Inequality for $\mathcal{A}$-quasiconvex Functions}\label{sec:garding}

\subsection{Decomposition Lemma}

The proof of our main result is based on a decomposition lemma which splits a weakly converging sequence into an $\mathcal{A}$-free oscillating and concentrating part. This extends \cite[Theorem 3.4]{JudithR} for the operator $\mathcal{B}$, rather than $\nabla$, and finds its origins in the decomposition results of Kristensen \cite{KrisDec}, and Fonseca and M\"uller \cite{Fon}.
The former of these results is based on the Helmholtz Decomposition, a version of which in the $\mathcal{A}$-free setting can be found in \cite{NL}. Below, we instead use the construction of Fonseca and M\"uller \cite[Lemma 2.14]{Fon} but follow the structure of proof found in \cite{JudithR} to help the reader understand the connection and differences between the ${\rm curl}$-free and $\mathcal{A}$-free cases.

Below we present a crucial result of Fonseca and M\"uller \cite[Lemma 2.14]{Fon} in which the constant rank property is essential and cannot be avoided.

\begin{lemma}\label{FMdec}
Let $\mathcal{A}$ as in \S \ref{sb11}. For every $1<p<+\infty$, there exists a linear and continuous projection operator $\mathcal{P}:L^p(Q)\to L^p(Q)$ and $C>0$ such that
\[
\mathcal{A}(\mathcal{P}v)=0,\quad\int_Q\mathcal{P}v=0\quad\text{and}\quad \|v-\mathcal{P}v\|_{L^p(Q)}\leq C \|\mathcal{A}v\|_{W^{-l,p}(Q)},
\]
for all $v\in L^p(Q)$ with $\int_Q v=0$.
\end{lemma}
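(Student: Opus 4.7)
The plan is to construct $\mathcal{P}$ explicitly as a Fourier multiplier operator on the torus. For each $\xi \in \mathbb{Z}^d \setminus \{0\}$, let $P(\xi)$ denote the orthogonal projection of $\mathbb{R}^N$ onto $\ker \mathbb{A}(\xi)$; under the constant rank hypothesis this extends to a smooth, $0$-homogeneous matrix-valued function on $\mathbb{R}^d \setminus \{0\}$ (this is the projection $\mathbb{P}$ already introduced in Section \ref{sb11}). Given $v\in L^p(Q)$ with $\int_Q v=0$ and Fourier expansion $v = \sum_{\xi\neq 0} \hat v(\xi)\, e^{2\pi i \xi\cdot x}$, I set
\[
\mathcal{P}v := \sum_{\xi\neq 0} P(\xi)\,\hat v(\xi)\, e^{2\pi i \xi\cdot x}.
\]
The algebraic identities $\mathbb{A}(\xi) P(\xi)=0$ and $P(\xi)^2 = P(\xi)$ yield $\mathcal{A}(\mathcal{P}v)=0$ and $\mathcal{P}^2 = \mathcal{P}$ at once, while $\widehat{\mathcal{P}v}(0)=0$ by construction. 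Boundedness $\mathcal{P}:L^p(Q)\to L^p(Q)$ for $1<p<\infty$ then follows from the Mikhlin multiplier theorem (in its standard toroidal form) applied to the smooth, $0$-homogeneous symbol $P(\cdot)$.

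For the quantitative estimate, I would exploit the Moore--Penrose pseudo-inverse $\mathbb{A}(\xi)^\dagger$, which, under the constant rank assumption, is smooth and $(-k)$-homogeneous on $\mathbb{R}^d \setminus \{0\}$ with $k$ the order of $\mathcal{A}$; this is precisely Decell's theorem invoked in Theorem \ref{thm:bogpot} and used in \cite{Potentials, NL}. Since $I-P(\xi)$ is the orthogonal projection onto $(\ker\mathbb{A}(\xi))^\perp = \operatorname{im} \mathbb{A}(\xi)^*$, it satisfies $\mathbb{A}(\xi)^\dagger\mathbb{A}(\xi)=I-P(\xi)$, and I may rewrite the Fourier coefficient as
\[
(I-P(\xi))\hat v(\xi) \;=\; \mathbb{A}(\xi)^\dagger \mathbb{A}(\xi)\,\hat v(\xi) \;=\; \mathbb{A}(\xi)^\dagger\,\widehat{\mathcal{A}v}(\xi).
\]
Factoring $\mathbb{A}(\xi)^\dagger = |\xi|^{-k}\,\mathbb{A}(\xi/|\xi|)^\dagger$, with the second factor a smooth, $0$-homogeneous multiplier, a second application of Mikhlin gives
\[
\|v-\mathcal{P}v\|_{L^p(Q)} \;\lesssim\; \Bigl\| \mathcal{F}^{-1}\bigl[\,|\xi|^{-k}\,\widehat{\mathcal{A}v}(\xi)\bigr] \Bigr\|_{L^p(Q)} \;=\; \|\mathcal{A}v\|_{W^{-k,p}(Q)},
\]
where the last equality uses the norm characterisation of Remark \ref{remark1} and the fact that $\mathcal{A}v$ has zero mean since $\mathcal{A}$ is a pure differential operator of positive order. (The symbol $l$ in the lemma statement is thus the order of $\mathcal{A}$.)

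The essential obstacle is purely a matter of symbol regularity: the constant rank hypothesis is indispensable precisely to guarantee that both $P(\xi)$ and $\mathbb{A}(\xi)^\dagger$ are smooth on $\mathbb{R}^d\setminus\{0\}$, which is what unlocks Mikhlin's theorem at the two decisive points in the argument. Without constant rank, $\dim \ker\mathbb{A}(\xi)$ may jump and $P(\xi)$ need not be smooth, so the construction collapses. A secondary but routine technical point is the standard transference passage from the Mikhlin multiplier theorem on $\mathbb{R}^d$ to its toroidal version for $1<p<\infty$, and verifying throughout that the negative Sobolev norm characterisation is being applied to zero-mean distributions.
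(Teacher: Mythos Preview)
Your proof is correct and is essentially the standard argument; it is in fact the proof given in \cite[Lemma~2.14]{Fon}, which the paper cites rather than reproves. The only remark worth making is that the paper does not supply its own proof of this lemma, so there is nothing to compare against beyond the cited source, and your construction via the Fourier multiplier $\xi\mapsto\mathbb{P}(\xi)$ together with the identity $I-\mathbb{P}(\xi)=\mathbb{A}(\xi)^\dagger\mathbb{A}(\xi)$ is precisely the mechanism behind that result. Your observation that the exponent in the negative Sobolev norm should be the order $k$ of $\mathcal{A}$ (rather than the $l$ appearing in the statement, which elsewhere denotes the order of the potential $\mathcal{B}$) is also well taken.
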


To reduce the number of indices in the proof of Lemma \ref{Dec} we assume that the operator $\mathcal{A}$ has order 1 and its potential operator $\mathcal{B}$ has order $l\geq 1$. Nevertheless, the result holds in the general case where the operator $\mathcal{A}$ has order $k\geq 1$ and the proof remains essentially the same.

\begin{lemma} \label{Dec}
Let $2\leq p < +\infty$ and $(\varphi_j)_j\subset W^{l,2}(Q)$ such that $\mathcal{B}\varphi_j\rightharpoonup \mathcal{B}\varphi$ in $L^2(Q)$. Let also $(r_j)_j\subset (0,1)$ such that $(r_j\mathcal{B}\varphi_j)_j$ bounded in $L^p(Q)$. Then, up to a subsequence, there exist sequences $(f_j)_j\subseteq W^{l,2}(Q)$ and $(b_j)_j\subseteq W^{l,2}(Q)$ such that 
\begin{enumerate}
\item[(1)] $\mathcal{B}f_j\rightharpoonup 0$ and $\mathcal{B}b_j\rightharpoonup 0$;
\item[(2)] $(|\mathcal{B}f_j|^2)_j$ is equiintegrable;
\item[(3)] $\mathcal{B}b_j\to 0$ in measure;
\item[(4)] $\mathcal{B}\varphi_{j}=\mathcal{B}\varphi+\mathcal{B}f_j+\mathcal{B}b_j$.
\end{enumerate}
In addition, for a further subsequence, $(f_j)_j$ and $(b_j)_j$ can be chosen so that
\begin{enumerate}
\item[(1\,$^\prime$)] $r_{j}\mathcal{B}f_j\rightharpoonup 0$ and $r_{j}\mathcal{B}b_j\rightharpoonup 0$ in $L^p(Q)$;
\item[(2\,$^\prime$)] $(|r_{j}\mathcal{B}f_j|^p)_j$ is equiintegrable;
\item[(3\,$^\prime$)] $r_{j}\mathcal{B}b_j\to 0$ in measure.
\end{enumerate}
\end{lemma}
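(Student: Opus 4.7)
The plan is to build $f_j$ by a higher-order Lipschitz truncation of the primitives and to take $b_j$ to be the residual; $\mathcal{A}$-freeness of $\mathcal{B}f_j$ and $\mathcal{B}b_j$ will be automatic from $\mathcal{A}\mathcal{B}\equiv 0$, so no appeal to the projection in Lemma~\ref{FMdec} is required. As a preliminary normalisation, I would first replace each $\varphi_j$ by its $\mathbb{B}^\dagger$-primitive (Lemma~\ref{Ptype}): the hypotheses on $\mathcal{B}\varphi_j$ are unaffected, but one gains the Sobolev bounds $\|\varphi_j\|_{W^{l,2}(Q)} \lesssim \|\mathcal{B}\varphi_j\|_{L^2(Q)}$ and $\|r_j\varphi_j\|_{W^{l,p}(Q)}\lesssim \|r_j\mathcal{B}\varphi_j\|_{L^p(Q)}$. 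Writing $u_j:=\varphi_j-\varphi$, the problem reduces to decomposing a sequence with $\mathcal{B}u_j\rightharpoonup 0$ in $L^2$, uniformly bounded in $W^{l,2}(Q)$, and with $r_j u_j$ uniformly bounded in $W^{l,p}(Q)$.

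The core step is a simultaneous two-scale truncation. For thresholds $\lambda_j,\mu_j\to\infty$ to be chosen, define the good set
\[
G_j := \{M(|\nabla^l u_j|)\le \lambda_j\}\cap \{M(|r_j\nabla^l u_j|)\le \mu_j\},
\]
where $M$ is the Hardy--Littlewood maximal operator, and invoke a higher-order Whitney (or Acerbi--Fusco type) extension to produce $\tilde u_j\in W^{l,\infty}(Q)$ with $\tilde u_j=u_j$ on $G_j$, $\|\nabla^l \tilde u_j\|_{L^\infty(Q)}\le C\lambda_j$ and $\|r_j\nabla^l \tilde u_j\|_{L^\infty(Q)}\le C\mu_j$. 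Set $f_j:=\tilde u_j$ and $b_j:=u_j-\tilde u_j$; property (4) is then immediate, $\mathcal{B}b_j$ vanishes on $G_j$ which yields (3) and (3$'$) once $|Q\setminus G_j|\to 0$, and the weak convergences (1), (1$'$) will follow from (2), (2$'$) together with uniqueness of weak limits.

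Calibrating $\lambda_j,\mu_j$ is the crux. I want both $\lambda_j,\mu_j\to\infty$ and $\lambda_j^2|Q\setminus G_j|\to 0$, $\mu_j^p|Q\setminus G_j|\to 0$; this is achievable by a diagonal selection, since for each fixed $j$ the integrability of $|\nabla^l u_j|^2$ and $|r_j\nabla^l u_j|^p$ forces $t^2|\{M(|\nabla^l u_j|)>t\}|\to 0$ and $t^p|\{M(|r_j\nabla^l u_j|)>t\}|\to 0$ as $t\to\infty$. With this calibration, the pointwise bound
\[
|\mathcal{B}f_j|^2\le |\mathcal{B}u_j|^2\chi_{G_j} + C\lambda_j^2\chi_{Q\setminus G_j}
\]
kills the contribution of the bad set; equiintegrability of the remaining piece, along a further subsequence, follows from Chacon's biting lemma applied to the $L^1$-bounded family $(|\mathcal{B}u_j|^2)$, with the biting set absorbed into $Q\setminus G_j$ by further enlarging $\lambda_j$ if necessary. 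The analogous argument with $|r_j\mathcal{B}u_j|^p$ gives (2$'$).

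The principal obstacle is the truncation itself: a first-order Lipschitz modification is insufficient, because we must pointwise control $\nabla^l u_j$ at two independent scales $\lambda_j,\mu_j$. A higher-order Whitney extension over the cubes of a Calder\'on--Zygmund decomposition at level $\max(\lambda_j,\mu_j)$, prescribing the $l$-jet of $u_j$ on the good set, delivers the required $\tilde u_j$. The rest of the argument is diagonal book-keeping, largely parallel to the proof of \cite[Theorem 3.4]{JudithR} in the $\mathrm{curl}$-free case, with the $\mathbb{B}^\dagger$-primitive estimates of Lemma~\ref{Ptype} playing the role traditionally served by the Poincar\'e inequality and the identity $\mathcal{A}\mathcal{B}=0$ removing the need to project back onto the $\mathcal{A}$-free constraint.
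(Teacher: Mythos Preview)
Your strategy is genuinely different from the paper's. There the \emph{image} $\mathcal{B}\varphi_j$ is truncated pointwise by $\tau_k$, which destroys $\mathcal{A}$-freeness; it is then restored via the Fonseca--M\"uller projection $\mathcal{P}$ of Lemma~\ref{FMdec}, and equiintegrability of $(|v_k|^2)$ comes from a Young-measure diagonal arranging $\int_Q|\tau_k(\mathcal{B}\varphi_{j_k})|^2\to\int_Q\langle|\cdot|^2,\nu_x\rangle$. You instead truncate the \emph{primitive} via a higher-order Lipschitz extension, so that $f_j=\tilde u_j$ automatically lies in the range of $\mathcal{B}$ and no projection is needed. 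That is a fair trade: Lemma~\ref{FMdec} is avoided at the price of a less standard higher-order Whitney machinery on the torus. (A minor point: since $M(|r_j\nabla^l u_j|)=r_jM(|\nabla^l u_j|)$, your ``two-scale'' good set is really the single sublevel set $\{M(|\nabla^l u_j|)\le\min(\lambda_j,\mu_j/r_j)\}$; the Calder\'on--Zygmund level is $\min(\lambda_j,\mu_j/r_j)$, not $\max(\lambda_j,\mu_j)$.)

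The substantive gap is in your equiintegrability step for $(|\mathcal{B}f_j|^2)$. You write that the biting set from Chacon's lemma can be ``absorbed into $Q\setminus G_j$ by further enlarging $\lambda_j$''. But enlarging $\lambda_j$ \emph{shrinks} the bad set $\{M(|\nabla^l u_j|)>\lambda_j\}$; it cannot swallow anything. If instead you enlarge the bad set by hand, taking $G_j'=G_j\setminus E_{n(j)}$ and extending from $G_j'$, you keep $\|\nabla^l\tilde u_j\|_\infty\le C\lambda_j$, but you must now arrange both $\lambda_j^2|E_{n(j)}|\to 0$ and equiintegrability of the diagonal family $\bigl(|\mathcal{B}u_j|^2\chi_{(E_{n(j)})^c}\bigr)_j$; the standard biting lemma gives the latter only for each \emph{fixed} $n$, and letting $n(j)\to\infty$ can destroy it. The clean fix is exactly the device the paper uses: pass to a subsequence generating a Young measure $\nu$ and choose the truncation level so that $\int_Q|\tau_{m_j}(\mathcal{B}u_j)|^2\to\int_Q\langle|\cdot|^2,\nu_x\rangle$, which \emph{is} $2$-equiintegrability; since on $G_j$ one has $|\mathcal{B}u_j|\le C\lambda_j$, taking $m_j\sim C\lambda_j$ transfers this to $\mathcal{B}f_j$. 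You should invoke that argument (or an equivalent level-selection lemma) rather than the biting lemma as you have written it.
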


\begin{proof}
 By extracting a subsequence, we may assume that $\mathcal{B}\varphi_j\xrightarrow{Y}(\nu_x)_x$ and $r_j\mathcal{B}\varphi_j\xrightarrow{Y}(\mu_x)_x$. The latter notation means that the sequences generate the respective Young measures and in particular that
 \[
 G(\mathcal{B}\varphi_j) \rightharpoonup \langle\nu_x, G\rangle = \int_{\mathbb{R}^N} G(z)\,d\nu_x(z) \mbox{ in }L^1(Q),
 \]
 whenever $(G(\mathcal{B}\varphi_j))$ is equiintegrable, see \cite{Pedregal,Rindler} for details on Young measures. 
 We also observe that, by working with the sequence $\varphi_j-\varphi$ instead of $\varphi_j$, we assume that $\varphi=0$. We split the proof into 4 steps.
\vspace{0.25cm}

\noindent\underline{Step 1. Truncation}: Define, for $k\in\mathbb{N}$, the truncation operator $\tau_k$ by
\[
\tau_k(z):=
     \begin{cases}
        z, &|z|\leq k, \\
        k\,{z}/{|z|}, &|z|>k.
    \end{cases}    
\]
By standard arguments, e.g. \cite[Lemma 2.15]{Fon}, we may find a subsequence such that
\begin{equation}\label{eqq2}
\lim_{k\to\infty}\int_Q|\tau_k(\mathcal{B}\varphi_{j_k})|^2=\int_Q\langle |.|^2,\mathcal{\nu}_x\rangle,
\end{equation}
\begin{equation}\label{eq3}
\lim_{k\to\infty}\int_Q|\tau_k(\mathcal{B}\varphi_{j_k})-\mathcal{B}\varphi_{j_k}|^q=0,
\end{equation}
for $1\leq q<2$. Letting $v_k:=\tau_k(\mathcal{B}\varphi_{j_k})$, it then follows from \eqref{eqq2}, \eqref{eq3} that $(v_k)_k$ is 2-equiintegrable and generates $(\nu_x)_x$. From \eqref{eq3} and the continuity of the operator $\mathcal{A}$, it also follows that $\mathcal{A}v_k\to 0$ in $W^{-1,q}(Q)$.
\vspace{0.25cm}

\noindent\underline{Step 2. Decomposition}: Since $v_k\in L^2(Q)$, we can extend it periodically to $\mathbb{R}^d$ and then apply Lemma \ref{FMdec} to infer that
\[
v_k-\int_Qv_k= F_k+ B_k
\]
where ${F}_k:=\mathcal{P}\Big(v_k-\int_Qv_k\Big)$, ${B}_k:=v_k-\int_Qv_k-\mathcal{P}\Big(v_k-\int_Qv_k\Big)$.
\vspace{0.25cm}

\noindent\underline{Claim 1}: ${B}_k\to 0$ in measure.\vspace{0.2cm}

\noindent By Lemma \ref{FMdec} 
we infer that
\[
\|{B}_k\|_{L^q(Q)}=\|v_k-\int_Qv_k-\mathcal{P}\Big(v_k-\int_Qv_k\Big)\|_{L^q(Q)}
\leq C\|\mathcal{A}v_k\|_{W^{-1,q}(Q)}
\to 0
\]
for all $1\leq q<2$. Hence, ${B}_k\to 0$ in $L^q(Q)$ and so in measure.
\vspace{0.25cm}

\noindent\underline{Claim 2}: $(|{F}_k|^2)_k$ is equiintegrable.\vspace{0.2cm}

\noindent By Step 1, $\Big(v_k-\int_Qv_k\Big)_k$ is 2-equiintegrable, and hence for $\varepsilon>0$ and $q>2$ there exists a sequence $(W_k)_k$ such that 
\[
\|v_k-\int_Qv_k-W_k\|_{L^2(Q)}\leq \varepsilon /C
\]
and $\sup_k\|W_k\|_{L^q(Q)}<+\infty$. This is an equivalent characterisation of equiintegrability, see \cite{KrisDec}. Taking into account the properties of the projection $\mathcal{P}$, we infer that 
\[
\|{F}_k-\mathcal{P}(W_k)\|_{L^2}=\|\mathcal{P}\Big(v_k-\int_Qv_k-W_k\Big)\|_{L^2}\leq C \|v_k-\int_Qv_k-W_k\|_{L^2}\leq \varepsilon 
\]
and 
\[
\sup_k\|\mathcal{P}(W_k)\|_{L^q}\leq C \sup\|W_k\|_{L^q}<+\infty.
\]
This concludes the proof of Claim 2.
\vspace{0.25cm}

\noindent\underline{Claim 3}: ${F}_k,\, {B}_k\rightharpoonup 0$ in $L^2(Q)$.\vspace{0.2cm}

\noindent Since $\mathcal{B}\varphi_{j_k}$ has zero average, \eqref{eq3} and Claim 2 imply that 
\[
{F}_k-\mathcal{B}\varphi_{j_k}=v_k-\int_Qv_k-\mathcal{B}\varphi_{j_k}-{B}_k=v_k-\mathcal{B}\varphi_{j_k}-\int_Q(v_k-\mathcal{B}\varphi_{j_k})-{B}_k\to 0
\]
in measure. In addition, by \eqref{eqq2}, $v_k$ is bounded in $L^2(Q)$ and by the continuity of $\mathcal{P}$, $({F}_k)_k$ is also bounded in $L^2(Q)$ and ${F}_k-\mathcal{B}\varphi_{j_k}\rightharpoonup 0$ in $L^2(Q)$. This proves the claim for $F_k$, since $\mathcal{B}\varphi_{j_k}\rightharpoonup 0$ in $L^2(Q)$. For $({B}_k)_k$ the claim is immediate as it is bounded in $L^2(Q)$ and converges to 0 in measure.
\vspace{0.25cm}

\noindent\underline{Step 3. Concluding the $L^2$-decomposition}: Since ${F}_k$ is $\mathcal{A}$-free with zero average, from Lemma \ref{Ptype} $(i)$, there exists a function $f_k\in W^{l,2}(Q)$ such that ${F}_k=\mathcal{B}f_k$. Set $b_k:=\varphi_{j_k}-f_k$. We thus conclude that
\[
\mathcal{B}b_k=\mathcal{B}\varphi_{j_k}-v_k+\int_Qv_k+{B}_k\to 0
\]
in measure as, by Claim 1, $\mathcal{B}\varphi_{j_k}-v_k\to 0$ in measure. Also, $\int_Qv_k\to 0$ since $\int_Q\mathcal{B}\varphi_{j_k}=0$ and \eqref{eqq2} with $q=1$, and ${B}_k\to 0$ by Claim 1.
Thus, 
\[
\mathcal{B}\varphi_{j_k}=\mathcal{B}f_k+\mathcal{B}b_k
\]
satisfying (1)-(4).
\vspace{0.5cm}

\noindent\underline{Step 4. $L^p$-decomposition}: This follows the arguments in \cite{JudithR} but we include it for completeness. Similarly to Step 1 we can extract a p-equiintegrable subsequence (not relabelled) such that
\begin{equation}\label{eq2}
\lim_{k\to\infty}\int_Q|\tau_k(r_j\mathcal{B}\varphi_j)|^p=\int_Q\langle |.|^p,\mathcal{\mu}_x\rangle,
\end{equation}
and with $v_k=\tau_k(\mathcal{B}\varphi_{j_k})$, we infer that 
\begin{equation*}
|r_{j_k}v_k(x)|=|\tau_{r_{j_k}k}(r_{j_k}\mathcal{B}\varphi_{j_k}(x))|
\leq |\tau_k(r_{j_k}\mathcal{B}\varphi_{j_k}(x))| 
=|\tau_k(r_j\mathcal{B}\varphi_j(x))|,
\end{equation*}
since $r\tau_k(z)=\tau_{rk}(rz)$, $r_{j_k}k\leq k$ and $k\mapsto \tau_k(z)$ is non-decreasing in $z$. Hence, the sequence $(r_{j_k}v_k)_k$ is p-equiintegrable and bounded in $L^p(Q)$. From the linearity and continuity of the projection $\mathcal{P}$, we find that 
\begin{align*}
\mathcal{P}\Big(r_{j_k}v_k-\int_Qr_{j_k}v_k\Big)=r_{j_k}\mathcal{P}\Big(v_k-\int_Qv_k\Big)=r_{j_k}{F}_k
\end{align*} 
and so $\|r_{j_k}{F}_k\|_{L^p(Q)}\lesssim \|r_{j_k}v_k\|_{L^p(Q)}$
which implies that the sequence $(r_{j_k}{F}_k)_k$ is also bounded in $L^p(Q)$. Hence, we can proceed as in Steps 2 and 3 and deduce that $\mathcal{B}f_k$, $\mathcal{B}b_k\rightharpoonup 0$ in $L^p(Q)$. Since $r_{k_j}\in (0,1)$, (3\,$^\prime$) is a straightforward implication of (3).
\end{proof}

\begin{remark}
We remark that the above decomposition applies to any $\mathcal{A}$-free and zero-average sequence $(\psi_j)_j\subseteq L^2(Q)$ with $\psi_j\rightharpoonup \psi$ in $L^2(Q)$. Indeed, by Lemma \ref{Ptype} $(i)$, $\psi_j=\mathcal{B}\varphi_j$, $\psi=\mathcal{B}\varphi$ for some $\varphi_j,\,\varphi\in W^{l,p}(Q)$. In addition, we can choose $b_j$ to be a $\mathbb{B}^\dagger$-primitive and hence to satisfy the bounds of Lemma \ref{Ptype}. Note that $f_j$ is already chosen as a $\mathbb{B}^\dagger$-primitive.

Moreover, we note that the decomposition lemma can also be applied to functions $\varphi_j$ which are defined on an open, bounded domain $\Omega\subset \mathbb{R}^d$. In that case, in Step 3, we need to truncate the functions $f_k$ and so, after the action of the operator $\mathcal{B}$ on the truncated functions, lower order terms will appear. Nevertheless, the strong convergence of the sequence $(f_j)_j$ in $W^{l-1,2}$
is enough to control the lower order terms and conclude the proof.
\end{remark}

\subsection{The G\aa rding-type inequality}

In this section, we prove the G\r{a}rding-type inequality in Theorem \ref{theorem:2}. We assume that $p\geq 2$ and for fixed $K\in\mathbb{R}$, we collect all continuous functions $\bar{U}:\overline{Q}\to\mathbb{R}^N$ in the $K$-ball of $L^{\infty}(Q)$ with a uniform modulus of continuity $\omega$ in the set
\[
\mathcal{U}_K:=\{\bar{U}\in C(\overline{Q}):\|\bar{U}\|_{L^{\infty}(Q)}\leq K,\,|\bar U(x) - \bar U(y)|\leq \omega(|x-y|),\,  \forall\,x,y\in \overline{Q}\}.
\]
Henceforth, we write $C=C(K)$ for any constant uniform for all $\bar{U}\in\mathcal{U}_K$.

Additionally, we assume that $W:\mathbb{R}^N\to\mathbb{R}$ satisfies the following:
\begin{enumerate}
    \item[(H1)] $W\in C^2(\mathbb{R}^N)$;
    \item[(H2)] $W$ is strongly $\mathcal{A}$-quasiconvex;
    \item[(H3)] $|W(z)|\leq c(1+|z|^p)$ and $|DW(z)|\leq c(1+|z|^{p-1})$;
    \item[(H4)] $c(|z|^{p}-1)\leq W(z)$.
\end{enumerate}
\vspace{0.1cm}
\begin{remark}
Recall that, as discussed in \S \ref{sec:AQ}, if $\Lambda_\mathcal{A}$ spans $\mathbb{R}^N$, the growth on $DW$ in (H3) follows from (H1), (H2) and the growth of $W$.
\end{remark}

Next, for $\bar{U}\in\mathcal{U}_K$  we define the function $W(\cdot|\cdot)$ by 
\begin{align*}
W(\bar{U}(x)+z|\bar{U}(x)) & = W(\bar{U}(x)+z) - W(\bar{U}(x)) - DW(\bar{U}(x))\cdot z \\
& = \int_0^1(1-s)D^2W(\bar{U}(x)+sz)\,ds\,z\cdot z.
\end{align*}
We note that this function is related to the relative entropy in the theory of conservation laws and to the Weierstrass excess function in the calculus of variations, see Sections \ref{sec:involutions} and \ref{sec:min}. We also define the auxiliary mapping $\|\cdot\|_{W^{-1,(2,p)}}:L^p(Q)\to\mathbb{R}$ (though not a norm) by
\begin{align}\label{eq:auxpr}
    \|u\|_{W^{-1,(2,p)}}&:=\big(\,\|u\|_{W^{-1,2}(Q)}^2+\|u\|_{W^{-1,p}(Q)}^p\,\big)^{1/2}.
\end{align}

\begin{theorem}\label{theorem:2}
Assume that $W$ satisfies (H1), (H2), (H3) and (H4). There exist constants ${C}_0={C}_0(W,K)>0$, ${C}_1={C}_1(W,K)>0$ such that for all $\bar{U}\in\mathcal{U}_{K}$ and all $\mathcal{A}$-free functions $\psi\in L^{p}(Q)$ with $\int_Q\psi=0$, it holds that
\begin{equation}\label{eq:GardingMain}
\int_Q |V(\psi(x))|^2 dx
\leq C_0 \int_Q W(\bar{U}(x)+\psi(x)|\bar{U}(x)) dx +  C_1\|\psi\|^2_{W^{-1,(2,p)}}.
\end{equation}
\end{theorem}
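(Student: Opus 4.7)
I would argue by contradiction in the spirit of the compactness/blow-up strategy of Acerbi--Fusco--Kristensen \cite{KrisDec}, adapted to the $\mathcal{A}$-free setting via Lemmas~\ref{Ptype} and~\ref{Dec}. Suppose no pair of constants $(C_0,C_1)$ makes \eqref{eq:GardingMain} true. Then there exist $\bar U_j\in\mathcal U_K$ and $\mathcal A$-free, zero-average $\psi_j\in L^p(Q)$ with
\[
\int_Q|V(\psi_j)|^2 \;>\; j\Bigl(\int_Q W(\bar U_j+\psi_j\,|\,\bar U_j)+\|\psi_j\|^2_{W^{-1,(2,p)}}\Bigr).
\]
By rescaling I normalise $\int_Q|V(\psi_j)|^2=1$, so that $(\psi_j)$ is bounded in $L^2\cap L^p$, while $\int_Q W(\bar U_j+\psi_j|\bar U_j)\to 0$ and $\|\psi_j\|_{W^{-1,(2,p)}}\to 0$. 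Arzel\`a--Ascoli furnishes $\bar U\in\mathcal U_K$ with $\bar U_j\to\bar U$ uniformly, and extracting further I obtain $\psi_j\rightharpoonup\psi$ in both $L^2$ and $L^p$; the decay of the $W^{-1,2}$ norm then forces $\psi=0$.

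Next, I apply Lemma~\ref{Dec} to $\varphi_j$, the $\mathbb B^\dagger$-primitive of $\psi_j$ supplied by Lemma~\ref{Ptype}, choosing any $r_j\in(0,1)$ (e.g.\ $r_j=1/2$), which is allowed since $\psi_j$ is already $L^p$-bounded. This yields a splitting $\psi_j=\mathcal Bf_j+\mathcal Bb_j$ with $\mathcal Bf_j,\mathcal Bb_j\rightharpoonup 0$ weakly in $L^2$ and $L^p$, $(|\mathcal Bf_j|^2)$ and $(|\mathcal Bf_j|^p)$ equiintegrable, and $\mathcal Bb_j\to 0$ in measure. By the remark following Lemma~\ref{Dec}, I may take $b_j$ to be a $\mathbb B^\dagger$-primitive, so Lemma~\ref{Ptype}(iv) gives $\|b_j\|_{W^{l-i,p}}+\|b_j\|_{W^{l-i,2}}\to 0$ for $i=1,\dots,l$ thanks to $\|\mathcal Bb_j\|_{W^{-1,(2,p)}}\to 0$.

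The main quantitative step combines a freezing argument with strong $\mathcal A$-quasiconvexity. I partition $Q$ into cubes $\{Q_i\}$ of side $\epsilon$ and set $\lambda_i:=\bar U(x_i)$. The $C^2$ regularity and $p$-growth of $W$, the uniform convergence $\bar U_j\to\bar U$, and the modulus $\omega$ let me compare
\[
\int_{Q_i}W(\bar U_j+\psi_j\,|\,\bar U_j)=\int_{Q_i}W(\lambda_i+\psi_j\,|\,\lambda_i)+O\!\bigl(\omega(\epsilon)+\|\bar U_j-\bar U\|_\infty\bigr)\int_{Q_i}\!\bigl(1+|\psi_j|^p\bigr).
\]
To invoke strong $\mathcal A$-quasiconvexity at the constant $\lambda_i$ on $Q_i$, I cut off the primitive $\varphi_j=f_j+b_j$ by a smooth $\chi_i$ supported in $Q_i$. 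The Leibniz remainders $\mathcal B(\chi_i\varphi_j)-\chi_i\mathcal B\varphi_j$ are differential operators of order strictly less than $l$ applied to $\varphi_j$, with coefficients in derivatives of $\chi_i$; the part coming from $b_j$ is $o(1)$ by Lemma~\ref{Ptype}(iv), while the part coming from $f_j$ is controlled through the $L^p$- and $L^2$-equiintegrability of $\mathcal Bf_j$, since the support of $\nabla\chi_i$ is a boundary layer of vanishing measure. Strong $\mathcal A$-quasiconvexity on $Q_i$ then delivers $\int_{Q_i}W(\lambda_i+\psi_j|\lambda_i)\geq c_0\int_{Q_i}|V(\psi_j)|^2 - o(1)$, and summing over $i$, then sending $j\to\infty$ and $\epsilon\to 0$, yields
\[
0=\lim_{j}\int_Q W(\bar U_j+\psi_j\,|\,\bar U_j)\;\geq\; c_0\limsup_{j}\int_Q|V(\psi_j)|^2\;=\;c_0,
\]
a contradiction.

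The principal obstacle I anticipate is precisely this localization step: strong $\mathcal A$-quasiconvexity demands $\mathcal A$-free compactly supported perturbations on $Q_i$, so one cannot simply cut off $\psi_j$ itself but must cut off its potential $\varphi_j$ and then control the lower-order Leibniz errors created when $\mathcal B$ hits $\chi_i$. The decomposition Lemma~\ref{Dec} is essential here because it separates the two mechanisms that generate uncontrollable errors: the oscillating part $\mathcal Bf_j$ has equiintegrable $L^p$ and $L^2$ densities so that boundary-layer contributions vanish uniformly, whereas the concentrating part $\mathcal Bb_j$ has a $\mathbb B^\dagger$-primitive with arbitrarily small lower-order Sobolev norms. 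This separation, together with the freezing of $\bar U$, is what permits the passage from the pointwise (constant-coefficient) strong $\mathcal A$-quasiconvexity estimate to a Gårding inequality for variable $\bar U$.
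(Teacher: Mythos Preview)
Your argument has a genuine gap at the normalisation step and, relatedly, misses an entire second tier of the proof. First, you cannot ``rescale'' to $\int_Q|V(\psi_j)|^2=1$: none of the three quantities in \eqref{eq:GardingMain} is homogeneous in $\psi$, so replacing $\psi_j$ by $t_j\psi_j$ does not preserve the contradicting inequality. What \emph{does} follow from the negation together with the coercivity (H4) (via Lemma~\ref{lemma:technical_general}(c)) is that $(\psi_j)$ is bounded in $L^2\cap L^p$; but you cannot conclude that $\|\psi_j\|_{W^{-1,(2,p)}}\to 0$, since $\int_Q W(\bar U_j+\psi_j\,|\,\bar U_j)$ may well be strictly negative for non-constant $\bar U_j$ --- indeed, this is precisely why a penalty term is needed in \eqref{eq:GardingMain}. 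Without $\psi_j\rightharpoonup 0$ the lower-order Sobolev norms of the $\mathbb B^\dagger$-primitive $\varphi_j$ do not vanish, and your Leibniz errors are uncontrolled.

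The paper repairs the first point by disposing of the regime $\|\psi\|_{W^{-1,p}}\geq\eps_0$ directly via (H4), which forces $\|\psi_j\|_{W^{-1,p}}\to 0$ along the contradicting sequence. Even then, your frozen quasiconvexity estimate (which is essentially Proposition~\ref{prop:2} with $a_k=1$) yields only
\[
\liminf_j\int_Q W(\bar U_j+\psi_j\,|\,\bar U_j)\ \gtrsim\ \liminf_j\int_Q|V(\psi_j)|^2,
\]
and combined with the contradiction hypothesis this gives merely $\int_Q|V(\psi_j)|^2\to 0$, i.e.\ $\psi_j\to 0$ strongly --- \emph{not} a contradiction, because the negated inequality is scale-free and perfectly compatible with $\psi_j\to 0$. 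To actually derive the contradiction the paper blows up by $\alpha_k=\|\psi_k\|_{L^2}$, applies Lemma~\ref{Dec} to the \emph{rescaled} sequence $w_k=\alpha_k^{-1}\varphi_k$, and analyses the limiting second variation $\int_Q D^2\tW(\bar U)\,\mathcal Bw\cdot\mathcal Bw$ through Young measures and the linear G\aa rding inequality of Proposition~\ref{prop:1}; this is Steps~2--5 of the proof of Theorem~\ref{thm:L^2small}, which your sketch omits entirely. (As a minor aside: the Leibniz remainders coming from $f_j$ involve $\nabla^{l-k}f_j$, not $\mathcal Bf_j$; they are small by Lemma~\ref{Ptype}(iv), not by equiintegrability of $|\mathcal Bf_j|^2$.)
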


The main component of the proof Theorem \ref{theorem:2} is presented as Theorem \ref{thm:L^2small} below which is of independent interest in Section \ref{sec:min}.

\begin{theorem}\label{thm:L^2small}
Assume that $W$ satisfies (H1), (H2), (H3) and (H4). There exists $\eps_0>0$ and constants $\tilde{C}_0=\tilde{C}_0(W,K)>0$, $\tilde{C}_1=\tilde{C}_1(W,K)>0$ such that for all $\bar{U}\in\mathcal{U}_K$ and all $\mathcal{A}$-free functions $\psi\in L^{p}(Q)$ with $\int_Q\psi=0$ and  $\|\psi\|_{W^{-1,p}(Q)}<\eps_0$, it holds that
\[
\int_Q |V(\psi(x))|^2dx \leq \tilde{C}_0 \int_QW(\bar{U}(x)+\psi(x)|\bar{U}(x))dx + \tilde{C}_1\|\psi\|^2_{W^{-1,(2,p)}}.
\]

\end{theorem}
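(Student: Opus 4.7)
My plan is to argue by contradiction and use Lemma \ref{Dec} to decompose a failing sequence into oscillating and concentrating $\mathcal{A}$-free parts, each treated by a pointwise application of strong $\mathcal{A}$-quasiconvexity via Young measures and partition-of-unity freezing on the $\mathbb{B}^\dagger$-primitives.

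Suppose the claim fails. Then there exist sequences $\bar U_j\in\mathcal{U}_K$ and $\mathcal{A}$-free $\psi_j\in L^p(Q)$ with $\int \psi_j=0$, $\|\psi_j\|_{W^{-1,p}}<1/j$, and $\int V(\psi_j)^2 > j\int W(\bar U_j+\psi_j|\bar U_j) + j\|\psi_j\|^2_{W^{-1,(2,p)}}$. Since $V$ is not homogeneous, an ad hoc case analysis (depending on whether $\|\psi_j\|_{L^2}^2$ or $\|\psi_j\|_{L^p}^p$ dominates in $\int V(\psi_j)^2$) normalises to $\int V(\psi_j)^2 = 1$, forcing $\int W(\bar U_j+\psi_j|\bar U_j)\to 0$ and $\|\psi_j\|_{W^{-1,(2,p)}}\to 0$. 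By Arzel\`a--Ascoli, $\bar U_j\to\bar U\in\mathcal{U}_K$ uniformly, and $\psi_j\rightharpoonup 0$ in $L^2\cap L^p$. Writing $\psi_j=\mathcal{B}\varphi_j$ via Lemma \ref{Ptype}(i) and applying Lemma \ref{Dec} in both its $L^2$ and $L^p$ forms, split $\psi_j=\mathcal{B}f_j+\mathcal{B}b_j$ with $(|\mathcal{B}f_j|^2)$ and $(|\mathcal{B}f_j|^p)$ equi-integrable, $\mathcal{B}b_j\to 0$ in measure, and both pieces $\mathcal{A}$-free with zero average. Via (H3) and Vitali, cross contributions vanish, so $\int W(\bar U_j+\psi_j|\bar U_j)$ and $\int V(\psi_j)^2$ split additively into the $\mathcal{B}f_j$- and $\mathcal{B}b_j$-pieces up to $o(1)$.

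The equi-integrable part $\mathcal{B}f_j$ generates a classical $L^p$-Young measure $(\nu_x)_x$. Partition $\overline Q$ by cubes $Q_i$ of side $\rho$, cut off $f_j$ by a smooth $\eta_i$ supported in $Q_i$, and apply strong $\mathcal{A}$-quasiconvexity at $\bar U(x_i)$ to $\mathcal{B}(\eta_i f_j)$, which is $\mathcal{A}$-free, zero-average, and supported in $Q_i$. By Leibniz, the commutator $\mathcal{B}(\eta_i f_j)-\eta_i\mathcal{B}f_j$ consists of terms $\partial^\beta\eta_i\otimes \partial^{l-|\beta|}f_j$ with $|\beta|\geq 1$, and Lemma \ref{Ptype}(iv) bounds it in $L^2\cap L^p$ by $\rho^{-l}\|\mathcal{B}f_j\|_{W^{-1,(2,p)}}\to 0$ as $j\to\infty$ for fixed $\rho$. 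Sending $j\to\infty$ and then $\rho\to 0$ yields
\[
\liminf_{j}\int W(\bar U_j+\mathcal{B}f_j|\bar U_j)\,dx \geq c_0\int_Q\langle \nu_x,V^2\rangle\,dx = c_0\liminf_j\int V(\mathcal{B}f_j)^2\,dx.
\]
The concentrating part $\mathcal{B}b_j$ is treated analogously via the generalised Young measure with atom at infinity, invoking (H4) for $p$-coercivity of $W(\bar U+\cdot|\bar U)$ at infinity. Summing,
\[
\liminf_{j}\int W(\bar U_j+\psi_j|\bar U_j)\,dx \geq c_0\liminf_j\int V(\psi_j)^2\,dx = c_0 > 0,
\]
contradicting $\int W(\bar U_j+\psi_j|\bar U_j)\to 0$.

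\textbf{Main obstacle.} The crux is the freezing step: applying strong $\mathcal{A}$-quasiconvexity at the varying point $\bar U(x)$ requires localising via a partition of unity on the primitive $f_j$, and the resulting Leibniz commutators carry a factor $\rho^{-l}$ that would blow up as $\rho\to 0$. The smallness hypothesis $\|\psi\|_{W^{-1,p}}<\varepsilon_0$, amplified along the contradictory sequence to $\|\psi_j\|_{W^{-1,p}}\to 0$, is precisely what enables Lemma \ref{Ptype}(iv) to dominate these commutators and to interchange the limits $j\to\infty$ and $\rho\to 0$; this is why the theorem is \emph{local} in $W^{-1,p}$. A secondary technical point is the concentrating part $\mathcal{B}b_j$, whose generalised Young measure may have a nontrivial atom at infinity; matching its contribution to $\int W(\bar U_j+\mathcal{B}b_j|\bar U_j)$ with that to $\int V(\mathcal{B}b_j)^2$ relies on the $p$-coercivity provided by (H4), which is used nowhere else in the argument.
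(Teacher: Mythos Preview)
Your proof has a genuine gap at the normalisation step. You cannot rescale so that $\int V(\psi_j)^2 = 1$: the relative energy $W(\bar U + \cdot\,|\,\bar U)$ is not homogeneous, so no rescaling of $\psi_j$ preserves the failing inequality, and splitting into cases on which of $\|\psi_j\|_{L^2}^2$, $\|\psi_j\|_{L^p}^p$ dominates does not help. Worse, your own freezing argument---which is essentially Proposition~\ref{prop:2} of the paper---applied directly to the unscaled sequence yields only $\liminf_j \int W(\bar U_j + \psi_j|\bar U_j) \gtrsim \liminf_j \int V(\psi_j)^2$; since the failing inequality drives the left side to zero, this forces $\int V(\psi_j)^2 \to 0$, \emph{not} a contradiction. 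This is precisely what the paper records as Step~1 of its proof.

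The missing idea is a blow-up followed by linearisation. Once $\alpha_j := \|\psi_j\|_{L^2} \to 0$ (and $\beta_j^p/\alpha_j^2$ stays bounded, via Lemma~\ref{lemma:technical_general}(c)), one sets $w_j := \alpha_j^{-1}\varphi_j$ and analyses $\alpha_j^{-2}\int \tW(\bar U_j + \alpha_j \mathcal{B}w_j\,|\,\bar U_j)$ with $\tW = W - c_2V^2$ from Lemma~\ref{lemma:tWall}. As $\alpha_j \to 0$ this quantity linearises on the oscillating part to the \emph{second variation} $\frac12\int\langle\nu_x, D^2\tW(\bar U)z\cdot z\rangle$ along the Young measure generated by $\mathcal{B}w_j$; the contradiction then comes from a separate G\aa rding inequality for this quadratic form (Proposition~\ref{prop:1}) together with Jensen's inequality for $\mathcal{A}$-quasiconvex quadratics. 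Your freezing argument is still needed---the paper uses it both to establish $\alpha_j\to 0$ and to handle the concentrating part $\mathcal{B}b_j$ after blow-up---but it is not the endgame. Because you never pass to the second variation, you also miss the structural reason the penalty $\|\psi\|^2_{W^{-1,(2,p)}}$ must survive in the final inequality: it absorbs the negative lower-order term in Proposition~\ref{prop:1}, not the Leibniz commutators (those do vanish in the limit, as you correctly observe).
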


We immediately infer Theorem \ref{theorem:2}.

\begin{proof}[Proof of Theorem \ref{theorem:2}]
We claim that for all $\varepsilon>0$ and all $\mathcal{A}$-free and zero-average functions $\psi\in L^p(Q)$ with $\|\psi\|_{W^{-1,p}(Q)}\geq \varepsilon$ it holds that 
\[
\int_Q |V(\psi)|^2 \leq C_0(\varepsilon) \int_Q W(\bar{U}+\psi|\bar{U}) + C_1(\varepsilon)\|\psi\|^2_{W^{-1,(2,p)}},
\]
where $C_0$ and $C_1$ also depend on $\varepsilon$. By Lemma \ref{Ptype} $(i)$ we find $\varphi\in W^{l,p}(Q)$ such that $\psi=\mathcal{B}\varphi$ and by the assumed coercivity of $W$, its smoothness and the fact that $\bar{U}\in\mathcal{U}_K$, we estimate by Young's inequality
\begin{align}
W(\bar{U}+\mathcal{B}\varphi|\bar{U}) &\geq c\left(-1 + |\bar{U}+\mathcal{B}\varphi|^p\right) - C(W,K) - C(\delta)|DW(\bar{U})|^q - \delta |\mathcal{B}\varphi|^p\nonumber \\
& \geq C |\mathcal{B}\varphi|^p - C(W,K),\label{eq:1.1}
\end{align}
for $\delta$ small enough. Note that since $\|\mathcal{B}\varphi\|_{W^{-1,p}(Q)}\geq \eps$, it follows that
\[
C(W,K) \leq \frac{C(W,K)}{\eps^p}\|\mathcal{B}\varphi\|_{W^{-1,p}(Q)}^p
\]
so that, integrating \eqref{eq:1.1} over $Q$ with $|Q|=1$, we infer that
\begin{equation}\label{eq:1.2}
C\int_Q |\mathcal{B}\varphi|^p \leq \int_QW(\bar{U}+\mathcal{B}\varphi|\bar{U}) + \frac{C(W,K)}{\eps^p}\|\mathcal{B}\varphi\|_{W^{-1,p}(Q)}^p.
\end{equation}
However, $\int_Q |V(\mathcal{B}\varphi)|^2 \leq 1 + 2\|\mathcal{B}\varphi\|^p_{L^p}$ and by virtue of the compact embedding $L^p(Q)\hookrightarrow W^{-1,p}(Q)$,
\[
\eps^p \leq  \|\mathcal{B}\varphi\|^p_{W^{-1,p}} \leq C \|\mathcal{B}\varphi\|^p_{L^p},
\]
i.e. $\int_Q |V(\mathcal{B}\varphi)|^2 \leq  C(\eps) \|\mathcal{B}\varphi\|^p_{L^p}$. In particular, \eqref{eq:1.2} says that
\begin{align*}
C(\eps) \int_Q |V(\mathcal{B}\varphi)|^2 
&\leq\int_Q W(\bar{U}+\mathcal{B}\varphi|\bar{U})+\frac{C}{\eps^p}\|\mathcal{B}\varphi\|^2_{W^{-1,(2,p)}},
\end{align*}
which is the desired inequality. Combined with Theorem \ref{thm:L^2small} and choosing $\eps=\eps_0$, we conclude the proof of Theorem \ref{theorem:2}.
\end{proof} 

We next prove a series of results which lead to the proof of Theorem \ref{thm:L^2small}. Lemma \ref{lemma:technical_general} provides some properties of the relative function $W(\cdot|\cdot)$ and its proof can be found in the Appendix. Parts (a)-(c) are collected from \cite{JudithR, JudithKostas,GM09}.

\begin{lemma}\label{lemma:technical_general}
Let $f$ satisfy (H1), (H3), (H4). Then the following hold: 
\begin{itemize}
\item[(a)] There exists $C=C(f,K)$ such that for all $\lambda\in \overline{B(0,K)}$
\[
|f(\lambda+z_1|\lambda) - f(\lambda+z_2|\lambda)| \leq C (|z_1| + |z_2| + |z_1|^{p-1}+|z_2|^{p-1})|z_1 - z_2|.
\]
In particular,
\[
|f(\lambda+z|\lambda)| \leq C|V(z)|^2.
\]
\item[(b)] For every $\delta>0$ there exists $R = R(\delta,f,K)>0$ such that for all $\lambda_1,\,\lambda_2\in \overline{B(0,K)}$ with $|\lambda_1-\lambda_2|<R$, it holds that
\[
|f(\lambda_1+z|\lambda_1) -f(\lambda_2+z|\lambda_2)| \leq \delta |V(z)|^2.
\]
\item[(c)] There exist constants $C=C(f,K)$, $\tilde{C}=\tilde{C}(f,K)$ such that for all $\lambda\in \overline{B(0,K)}$
\[
f(\lambda+z|\lambda) \geq C |z|^p - \tilde{C}|z|^2.
\]
\item[(d)] If $f$ is also strongly convex, i.e. $D^2f(\lambda) z\cdot z\geq \gamma |z|^2$, then there exists $C=C(f,K)$ such that for all $\lambda\in \overline{B(0,K)}$
\[
f(\lambda+z|\lambda) \geq C|V(z)|^2.
\]
\end{itemize}
\end{lemma}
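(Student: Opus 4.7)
The plan is to treat the four items in order. Parts (a)-(c) are essentially the statements already proven in the cited references \cite{JudithR, JudithKostas, GM09} for $\mathcal{A}={\rm curl}$, and their proofs transfer verbatim since only pointwise analytic properties of $f$ enter, with no role played by the differential constraint. The genuine novelty is (d), which I would extract by combining the strong-convexity bound with (c).

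For (a), I would use the fundamental theorem of calculus to write
\[
f(\lambda+z_1|\lambda) - f(\lambda+z_2|\lambda) = \int_0^1 \bigl[Df(\lambda + z_2 + s(z_1-z_2)) - Df(\lambda)\bigr]\cdot (z_1 - z_2)\,ds,
\]
and bound the integrand by the standard estimate $|Df(\lambda+\mu)-Df(\lambda)|\leq C(|\mu|+|\mu|^{p-1})$, which follows from (H1) and (H3) uniformly for $\lambda\in\overline{B(0,K)}$. Since $|z_2+s(z_1-z_2)|\leq|z_1|+|z_2|$, the claim follows; the second assertion is the special case $z_2=0$. For (b), splitting
\[
f(\lambda_1+z|\lambda_1) - f(\lambda_2+z|\lambda_2) = \bigl(f(\lambda_1+z) - f(\lambda_2+z)\bigr) - \bigl(f(\lambda_1) - f(\lambda_2)\bigr) - \bigl(Df(\lambda_1) - Df(\lambda_2)\bigr)\cdot z,
\]
the mean value theorem and (H3) yield a bound of the form $C|\lambda_1 - \lambda_2|(1 + |z| + |z|^{p-1})$, and Young's inequality absorbs the $|z|$ and $|z|^{p-1}$ factors into $\delta|V(z)|^2$ once $|\lambda_1-\lambda_2|$ is small enough in terms of $\delta$. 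Part (c) combines coercivity (H4) with uniform $C(K)$-bounds on $f$ and $Df$ on $\overline{B(0,K)}$, together with the elementary rearrangement $|\lambda+z|^p \geq 2^{1-p}|z|^p - |\lambda|^p$ and Young's inequality to absorb the $|z|$-term into $|z|^2$ (up to enlarging $\tilde{C}$ to kill the residual additive constant, which is inert for $|z|$ small).

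For the main part (d), strong convexity applied to
\[
f(\lambda+z|\lambda) = \int_0^1 (1-s)\,D^2 f(\lambda + sz)z\cdot z\,ds
\]
immediately yields $f(\lambda+z|\lambda) \geq \tfrac{\gamma}{2}|z|^2$ for all $\lambda,z$. To upgrade this to the $|V(z)|^2$ bound, I would split into two regimes using part (c). Choose $R\geq 1$ with $R^{p-2}\geq 2\tilde{C}/C$, where $C,\tilde{C}$ are the constants from (c); then for $|z|\geq R$, part (c) gives $f(\lambda+z|\lambda) \geq \tfrac{C}{2}|z|^p$, and since $|V(z)|^2 \leq 2|z|^p$ in this regime, we obtain $f(\lambda+z|\lambda)\geq\tfrac{C}{4}|V(z)|^2$. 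For $|z|\leq R$, the elementary estimate $|V(z)|^2 \leq (1+R^{p-2})|z|^2$ together with the strong convexity bound gives $f(\lambda+z|\lambda)\geq \tfrac{\gamma}{2(1+R^{p-2})}|V(z)|^2$. Taking the minimum of the two constants finishes the proof.

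The proof is not technically deep; the main obstacle is merely the bookkeeping to ensure all constants depend solely on $f$ and $K$. The conceptual point is that the strong-convexity bound ($|z|^2$) and the coercive bound in (c) ($|z|^p$ minus a lower-order defect) carry \emph{different homogeneities}, so neither alone suffices to control $|V(z)|^2 = |z|^2 + |z|^p$; the two-regime split above is precisely what combines them into a single uniform $|V(z)|^2$ lower bound.
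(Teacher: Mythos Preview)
Your arguments for (a), (c), and (d) are correct and essentially match the paper. In (a) you avoid the paper's case split by invoking the intermediate estimate $|Df(\lambda+\mu)-Df(\lambda)|\leq C(|\mu|+|\mu|^{p-1})$ directly, which is cleaner; in (d) your two-regime split is exactly what the paper does.

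There is, however, a genuine gap in (b). Your decomposition together with the mean value theorem and (H3) yields a bound of order $C|\lambda_1-\lambda_2|\,(|z|+|z|^{p-1})$ (even dropping the ``$1+$'', which a more careful grouping of the first two terms allows). The problem is the behaviour as $z\to 0$: this quantity is $O(|z|)$, whereas $|V(z)|^2=|z|^2+|z|^p$ is $O(|z|^2)$. No application of Young's inequality can absorb an $O(|z|)$ term into $\delta|V(z)|^2$ uniformly near the origin---Young produces an additive constant that $|V(z)|^2$ cannot swallow when $z$ is small.

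The fix, which is what the paper does, is to treat $|z|\leq 1$ separately using the second-order representation
\[
f(\lambda_1+z|\lambda_1)-f(\lambda_2+z|\lambda_2)=\int_0^1(1-s)\bigl[D^2f(\lambda_1+sz)-D^2f(\lambda_2+sz)\bigr]z\cdot z\,ds,
\]
which carries an explicit $|z|^2$ factor; uniform continuity of $D^2f$ on $\overline{B(0,K+1)}$ then gives the $\delta$-smallness directly. For $|z|>1$ your bound is perfectly adequate, since then $1+|z|+|z|^{p-1}\lesssim |V(z)|^2$. This is the same small/large split you already use implicitly in (c) and explicitly in (d); (b) needs it too.
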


 Next, we define the function $\tilde{W}$ which plays a crucial role in our analysis. It retains the key quasiconvexity property of $W$ in $\overline{B(0,K)}$ and provides the left hand side in the G\r{a}rding inequality \eqref{eq:GardingMain} from Theorem \ref{theorem:2}. 

\begin{lemma}\label{lemma:tWall}
There exists a constant $c_2 = c_2(W,K)$ such that
\[
\tW(z) : = W(z) -  c_2 |V(z)|^2
\]
is $p$-coercive, i.e. $\tW(z) \gtrsim -1 + |z|^p$ and satisfies the following: 
\vspace{0.25cm}

\noindent(1) $\tW$ is strongly $\mathcal{A}$-quasiconvex with constant $c_0/2$ at all $\lambda\in \overline{B(0,K)}$, i.e. for any $Q^\prime\subset Q$ and all $|\lambda|\leq K$, 
\[
\int_{Q^\prime}\tW(\lambda+\nphi) - \tW(\lambda) \geq \frac{c_0}{2} \int_{Q^\prime} |V(\nphi)|^2,\quad\text{for all}\,\,\,\varphi\in W^{l,p}_0(Q^\prime)\,.
\]
\noindent(2) For all $Q^\prime\subset Q$ and $\lambda\in \overline{B(0,K)}$ it holds that
\[
\int_{Q^\prime} D^2\tW(\lambda) \nphi\cdot \nphi \geq c_0 \int_{Q^\prime} |\nphi|^2\quad\text{for all}\,\,\,\varphi\in W^{l,p}_0(Q^\prime).
\]
Equivalently, (1) and (2) can be stated over the torus $Q$ and test functions $\psi\in L^p(Q)$, $\mathcal{A}$-free and zero-average.
\end{lemma}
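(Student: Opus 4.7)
The strategy is to choose $c_2 = c_2(W,K)>0$ sufficiently small that subtracting $c_2|V(z)|^2$ from $W$ damages neither its quasiconvexity constant nor the positive-definiteness of its Hessian by more than a factor of two. Throughout, the key structural fact I will exploit is that $|V(z)|^2 = |z|^2 + |z|^p$ is convex (for $p\geq 2$) and, crucially, that $D^2|V|^2(\lambda) = 2\mathbb{I} + p|\lambda|^{p-2}\mathbb{I} + p(p-2)|\lambda|^{p-4}\lambda\otimes\lambda$ is uniformly bounded by some $C_V=C_V(K,p)$ whenever $|\lambda|\leq K$.

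For $p$-coercivity, I would simply combine (H4) with Young's inequality $|z|^2\leq 1+|z|^p$: this yields $\tW(z) \geq c|z|^p - c - 2c_2|z|^p - c_2$, which is $p$-coercive once $c_2 < c/4$, say.

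The heart of (1) is a uniform upper bound on the Weierstrass excess of $|V|^2$. Fix $\lambda\in\overline{B(0,K)}$ and $\varphi\in W^{l,p}_0(Q')$. Since $\mathcal{B}$ is homogeneous of order $l\geq 1$ and $\varphi$ is compactly supported, an integration by parts gives $\int_{Q'}\mathcal{B}\varphi=0$, so the cross terms in the expansions of $|\lambda+\mathcal{B}\varphi|^2$ and $p|\lambda|^{p-2}\lambda\cdot\mathcal{B}\varphi$ drop out, leaving
\[
\int_{Q'}\bigl(|V(\lambda+\mathcal{B}\varphi)|^2-|V(\lambda)|^2\bigr) = \int_{Q'}|\mathcal{B}\varphi|^2 + \int_{Q'}\bigl(|\lambda+\mathcal{B}\varphi|^p-|\lambda|^p - p|\lambda|^{p-2}\lambda\cdot\mathcal{B}\varphi\bigr).
\]
A Taylor expansion of $t\mapsto|\lambda+tz|^p$ with $|\lambda|\leq K$ yields the pointwise bound $\bigl||\lambda+z|^p-|\lambda|^p-p|\lambda|^{p-2}\lambda\cdot z\bigr|\leq C(K,p)(|z|^2+|z|^p)$, so the right-hand side is controlled by $C'\int_{Q'}|V(\mathcal{B}\varphi)|^2$ for some $C'=C'(K,p)$. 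Combining with the strong $\mathcal{A}$-quasiconvexity of $W$ (with constant $c_0$) applied at $\lambda$ gives
\[
\int_{Q'}\tW(\lambda+\mathcal{B}\varphi)-\tW(\lambda) \geq (c_0-c_2 C')\int_{Q'}|V(\mathcal{B}\varphi)|^2,
\]
and choosing $c_2\leq c_0/(2C')$ produces (1).

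For (2), I would linearise the strong $\mathcal{A}$-quasiconvexity of $W$: replacing $\mathcal{B}\varphi$ by $\varepsilon\mathcal{B}\varphi$ in the defining inequality, dividing by $\varepsilon^2$ and sending $\varepsilon\to 0^+$ yields
\[
\int_{Q'}D^2W(\lambda)\mathcal{B}\varphi\cdot\mathcal{B}\varphi \geq 2c_0\int_{Q'}|\mathcal{B}\varphi|^2.
\]
Since $D^2|V|^2(\lambda)$ is bounded by $C_V(K,p)$, subtracting gives $\int_{Q'}D^2\tW(\lambda)\mathcal{B}\varphi\cdot\mathcal{B}\varphi \geq (2c_0-c_2 C_V)\int_{Q'}|\mathcal{B}\varphi|^2$, and the choice $c_2\leq c_0/C_V$ yields (2) with constant $c_0$. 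Taking $c_2$ to be the minimum of the bounds imposed in the three steps produces a constant depending only on $W$ and $K$. The equivalent torus formulation follows from the established equivalence of the two definitions of strong $\mathcal{A}$-quasiconvexity recalled in \S\ref{sec:AQ}.

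The only non-routine step is the Weierstrass excess bound for $|z|^p$ in the proof of (1); everything else is a careful bookkeeping of constants. The Taylor estimate itself is classical, but I would want to verify that its constants depend only on $K$ and $p$ (not on $\lambda$ itself) so that $c_2$ can be chosen uniformly in $\lambda\in\overline{B(0,K)}$.
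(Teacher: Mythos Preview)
Your proof is correct and follows essentially the same approach as the paper. The only notable difference is in part (2): the paper derives the second-variation inequality directly from part (1) by linearising the strong $\mathcal{A}$-quasiconvexity of $\tW$ itself (so no further constraint on $c_2$ is needed), whereas you linearise $W$ first and then subtract the bounded Hessian of $|V|^2$, which imposes an additional harmless smallness condition on $c_2$. Both routes are valid; the paper's is slightly more economical in its bookkeeping, while yours makes the dependence on the Hessian bound for $|V|^2$ more explicit.
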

\begin{proof}
The coercivity of $\tW$ follows from that of $W$ and the fact that $|z|^2\leq 1 + |z|^p$. For (1), let $f(z):= |V(z)|^2$ and note that by Lemma \ref{lemma:technical_general} (a)
\begin{align*}
f(\lambda+\mathcal{B}\varphi) - f(\lambda) = Df(\lambda)\cdot \mathcal{B}\varphi + f(\lambda+\mathcal{B}\varphi|\lambda)
\leq Df(\lambda)\cdot \mathcal{B}\varphi + C|V(\mathcal{B}\varphi)|^2
\end{align*}
for all $|\lambda|\leq K$. Hence, for $\varphi\in W^{l,p}_0(Q^\prime)$, noting that $\int_{Q^\prime}\mathcal{B}\varphi=0$, 
\begin{align*}
\int_{Q^\prime} |V(\lambda + \mathcal{B}\varphi)|^2 - |V(\lambda)|^2 & \leq C\int_{Q^\prime} |V(\psi)|^2.
\end{align*}
Using again that $\int_{Q^\prime}\mathcal{B}\varphi=0$, by the strong $\mathcal{A}$-quasiconvexity of $W$,
\begin{align*}
\int_{Q^\prime} \tW(\lambda + \mathcal{B}\varphi) - \tW(\lambda)& \geq  c_0 \int_{Q^\prime}|V(\mathcal{B}\varphi)|^2 - c_2 C\int_{Q^\prime}|V(\mathcal{B}\varphi)|^2.
\end{align*}
Hence, choosing $c_2 \leq c_0/(2C)$, we conclude the proof of (1). 

For (2), fix $\lambda\in\Rdd$, $|\lambda|\leq K$, and note that $\mathcal{A}$-quasiconvexity says that $I(0) \leq I(\mathcal{B}\varphi)$ for all $\varphi\in W^{l,p}_0(Q^\prime)$,
where
\[
I(z) := \int_{Q^\prime} \tW(\lambda + z) - \tW(\lambda) - \frac{c_0}{2}  |V(z)|^2.
\]
Hence, for all $\varphi\in W^{l,p}_0(Q^\prime)$,
\[
0 \leq \frac{d^2}{d\eps^2} I(\eps\mathcal{B}\varphi)|_{\eps = 0} = \int_{Q^\prime} D^2 \tW(\lambda)\nphi\cdot \nphi - c_0|\nphi|^2.
\]
This concludes the proof of the lemma.
\end{proof}

In the next proposition we prove a G\aa rding-type inequality based on the $\Lambda_\mathcal{A}$-convexity of $\tilde{W}$ which is crucial for the contradiction argument of the proof of Theorem \ref{thm:L^2small}. The proof follows the arguments of \cite[Lemma 4.3]{Daf} and \cite{Giaquinta}. Note that compared to \cite{Daf}, since $\bar{U} \in \mathcal{U}_K$, we do not need to assume any smallness on the local oscillations.

\noindent\begin{proposition}\label{prop:1}
For every $\delta>0$, there exists a constant $c_1 = c_1(W,K,\delta)$ such that for all $\bar{U}\in\mathcal{U}_K$ and $\varphi\in W^{l,p}(Q)$
\[
\int_{Q} D^2\tW(\bar{U}(x))\nphi\cdot \nphi \geq c_0(1-\delta) \int_Q |\nphi|^2 - c_1 \sum_{i=1}^l\int_Q|\nabla^{l-i}\varphi|^2.
\]
\end{proposition}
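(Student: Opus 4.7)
The strategy is the classical Gårding / freezing-of-coefficients argument combined with a smooth partition of unity, in the spirit of Dafermos \cite{Daf} and Giaquinta: one reduces the inequality to finitely many problems with constant coefficients, on each of which Lemma \ref{lemma:tWall}(2) directly applies. The twist is that the available coercivity is formulated along $\nphi$ rather than $\nabla^l\varphi$, so one must commute the operator $\mathcal{B}$ with the cut-off functions, and the commutator (which involves derivatives of $\varphi$ of order at most $l-1$) is precisely what produces the lower-order remainder $\sum_{i=1}^l\int_Q|\nabla^{l-i}\varphi|^2$ on the right-hand side.

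Concretely, I would proceed as follows. Since $\bar U\in\mathcal{U}_K$ has uniform modulus of continuity $\omega$ and $D^2\tilde W$ is uniformly continuous on $\overline{B(0,K)}$, for any $\eta>0$ one can choose $r=r(\eta)>0$ with $|D^2\tilde W(\bar U(x))-D^2\tilde W(\bar U(y))|\le \eta$ whenever $|x-y|\le 2r$, uniformly in $\bar U$. Cover the torus $Q$ by finitely many balls $B(x_j,r)$ sitting in a fundamental domain, and construct a smooth partition of unity $\{\chi_j\}$ with $\chi_j\in C_c^\infty(B(x_j,2r))$, $\sum_j\chi_j^2\equiv 1$, bounded overlap, and $|\partial^\gamma\chi_j|\le C_\gamma r^{-|\gamma|}$. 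Set $\lambda_j:=\bar U(x_j)$ and use Leibniz to write
\[
\chi_j\nphi = \mathcal{B}(\chi_j\varphi) - E_j(\varphi),
\]
where each term of $E_j(\varphi)$ is of the form $\partial^\beta\chi_j\,\partial^{\alpha-\beta}\varphi$ with $|\alpha|=l$ and $|\beta|\ge 1$, so that $|E_j(\varphi)|^2\le C(r)\sum_{i=1}^l|\nabla^{l-i}\varphi|^2$ pointwise.

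Using $\sum_j\chi_j^2\equiv 1$, split
\[
\int_Q D^2\tilde W(\bar U)\nphi\cdot\nphi = \sum_j \int_Q D^2\tilde W(\bar U)(\chi_j\nphi)\cdot(\chi_j\nphi).
\]
On each summand, replace $D^2\tilde W(\bar U)$ by $D^2\tilde W(\lambda_j)$; the resulting error is bounded by $\eta\int_Q|\chi_j\nphi|^2$ by the choice of $r$. For the frozen quadratic form, substitute $\chi_j\nphi = \mathcal{B}(\chi_j\varphi)-E_j(\varphi)$, expand, and use Young's inequality to absorb the cross term, obtaining
\[
D^2\tilde W(\lambda_j)(\chi_j\nphi)\cdot(\chi_j\nphi) \ge (1-\delta')\,D^2\tilde W(\lambda_j)\mathcal{B}(\chi_j\varphi)\cdot\mathcal{B}(\chi_j\varphi) - C_{\delta'}|E_j(\varphi)|^2.
\]
Now $\mathcal{B}(\chi_j\varphi)$ is $\mathcal{A}$-free on $Q$ (since $\mathcal{A}\circ\mathcal{B}\equiv 0$) and has zero average on the torus (being the image under a differential operator of order $\ge 1$), so Lemma \ref{lemma:tWall}(2), applied with constant $\lambda=\lambda_j\in\overline{B(0,K)}$, yields
\[
\int_Q D^2\tilde W(\lambda_j)\mathcal{B}(\chi_j\varphi)\cdot\mathcal{B}(\chi_j\varphi) \ge c_0\int_Q|\mathcal{B}(\chi_j\varphi)|^2 \ge c_0(1-\delta'')\int_Q|\chi_j\nphi|^2 - c_0 C_{\delta''}\int_Q|E_j(\varphi)|^2,
\]
where the last step is another application of Young's inequality to $\mathcal{B}(\chi_j\varphi) = \chi_j\nphi + E_j(\varphi)$. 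Summing over $j$, invoking $\sum_j\chi_j^2\equiv 1$ and the pointwise bound on $\sum_j|E_j(\varphi)|^2$, I obtain
\[
\int_Q D^2\tilde W(\bar U)\nphi\cdot\nphi \ge (c_0-\eta)(1-\delta')(1-\delta'')\int_Q|\nphi|^2 - C(r,\delta',\delta'')\sum_{i=1}^l\int_Q|\nabla^{l-i}\varphi|^2.
\]
Choosing $\delta'$, $\delta''$ and then $\eta$ (hence $r$) small enough so that the leading coefficient is at least $c_0(1-\delta)$ yields the claim with $c_1:=C(r,\delta',\delta'')$ depending only on $W$, $K$, and $\delta$.

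The principal obstacle is the bookkeeping of the three small parameters $\eta,\delta',\delta''$ and their dependence on the scale $r$ of the cover so that the coefficient of $\int|\nphi|^2$ lands exactly at $c_0(1-\delta)$ while $c_1$ remains a genuine constant once $\delta$ is fixed. A minor technical point is verifying admissibility of $\mathcal{B}(\chi_j\varphi)$ as a test object in Lemma \ref{lemma:tWall}(2): this uses the identity $\mathcal{A}\mathcal{B}\equiv 0$ and the vanishing of the torus-average of any exact $\mathcal{B}$-image, both of which are immediate from the construction of $\mathcal{B}$ in Theorem \ref{thm:bogpot}.
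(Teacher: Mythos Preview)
Your proposal is correct and follows essentially the same route as the paper: freeze coefficients via uniform continuity of $D^2\tW$ on $\overline{B(0,K)}$, introduce a quadratic partition of unity, commute $\mathcal{B}$ with the cut-offs to produce lower-order remainders, apply Lemma~\ref{lemma:tWall}(2) on each frozen piece, and reassemble. The paper invokes the compactly-supported form of Lemma~\ref{lemma:tWall}(2) on subcubes $Q_i$ whereas you use the equivalent torus formulation, but this is inconsequential.

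One technical slip: the displayed pointwise inequality
\[
D^2\tW(\lambda_j)(\chi_j\nphi)\cdot(\chi_j\nphi) \ge (1-\delta')\,D^2\tW(\lambda_j)\mathcal{B}(\chi_j\varphi)\cdot\mathcal{B}(\chi_j\varphi) - C_{\delta'}|E_j(\varphi)|^2
\]
is not valid as written, because $D^2\tW(\lambda_j)$ need not be pointwise positive semi-definite (it is only $\mathcal{A}$-quasiconvex). The correct pointwise estimate, obtained by bounding the cross term via $|2D^2\tW(\lambda_j)\,\mathcal{B}(\chi_j\varphi)\cdot E_j|\le \delta'|\mathcal{B}(\chi_j\varphi)|^2 + C_{\delta'}|E_j|^2$, is
\[
D^2\tW(\lambda_j)(\chi_j\nphi)\cdot(\chi_j\nphi) \ge D^2\tW(\lambda_j)\mathcal{B}(\chi_j\varphi)\cdot\mathcal{B}(\chi_j\varphi) - \delta'|\mathcal{B}(\chi_j\varphi)|^2 - C_{\delta'}|E_j|^2,
\]
which after integration and application of Lemma~\ref{lemma:tWall}(2) yields $(c_0-\delta')\int|\mathcal{B}(\chi_j\varphi)|^2$ on the right, and the rest of your argument goes through unchanged. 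This is exactly how the paper handles the corresponding terms $I$, $II$, $III$.
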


\begin{proof}
Fix $\delta>0$ and pick a finite cover $\{Q_i\}\subset Q$, $Q_i=Q_i(x_i,r_i)$, such that
\[
|D^2\tW(\bar{U}(x)) - D^2\tW(\bar{U}(x_i))| \leq c_0 \delta(1-\delta)^2.
\]
Note that since $\bar{U}\in\mathcal{U}_K$ are bounded with a uniform modulus of continuity, and $\tW\in C^2(\Rdd)$ the cover can be chosen uniformly for any $\bar{U}\in\mathcal{U}_K$. 

Next, choose a partition of unity $\{\rho_i\}$ subordinate to the cover $\{Q_i\}$ such that $\rho_i\in C^\infty_c(Q_i)$ and $\sum_i \rho_i^2 = 1$. Given $\varphi\in W^{l,p}(Q)$, 
\begin{align*}
\int_{Q} D^2\tW(\bar{U}(x))\nphi\cdot \nphi & = \sum_i \int_{Q_i}\rho_i^2 D^2\tW(\bar{U}(x_i))\nphi\cdot \nphi\nonumber\\
& + \sum_i \int_{Q_i}\rho_i^2 \left[D^2\tW(\bar{U}(x)) - D^2\tW(\bar{U}(x_i))\right]\nphi\cdot \nphi 
\end{align*}
so that, by the choice of the cover, and for all $\bar{U}\in\mathcal{U}_K$,
\begin{align}
\int_{Q} D^2\tW(\bar{U}(x))\nphi\cdot \nphi & \geq \sum_i \int_{Q_i} D^2\tW(\bar{U}(x_i))(\rho_i\nphi)\cdot(\rho_i\nphi)\nonumber\\
& \quad - c_0\delta(1-\delta)^2 \int_{Q}|\nphi|^2. 
\label{eq:1}
\end{align}
Note that $\rho_i\nphi = \mathcal{B}(\rho_i\varphi) - \sum_{j=1}^lB^L_j[\nabla^j\rho_i,\nabla^{l-j}\varphi]$, where $B^L_j$ are given by the Leibniz rule. However, $\rho_i\varphi\in W^{l,p}_0(Q_i)$ and $|\bar{U}(x_i)|\leq K$ so that by Lemma \ref{lemma:tWall}
\begin{equation} \label{eq:2}
\int_{Q_i} D^2\tW(\bar{U}(x_i))\mathcal{B}(\rho_i\varphi)\cdot\mathcal{B}(\rho_i\varphi) \geq c_0 \int_{Q_i} |\mathcal{B}(\rho_i\varphi)|^2.
\end{equation}
Moreover, note that
\begin{equation}\label{eq:2b}
\|\sum_{j=1}^lB^L_j[\nabla^j\rho_i,\nabla^{l-j}\varphi]\|_{L^2(Q_i)}^2 \leq C(\sup_j\|\nabla^j\rho_i\|_\infty)\sum_{j=1}^l\int_{Q_i}|\nabla^{l-j}\varphi|^2 .
\end{equation}
Then,
\begin{align*}
&\int_{Q_i}\rho_i^2 D^2\tW(\bar{U}(x_i))\nphi\cdot\nphi = \int_{Q_i} D^2\tW(\bar{U}(x_i))\mathcal{B}(\rho_i\varphi)\cdot\mathcal{B}(\rho_i\varphi) \\
 &+ \int_{Q_i} D^2\tW(\bar{U}(x_i))\Big(\sum_{j=1}^lB^L_j[\nabla^j\rho_i,\nabla^{l-j}\varphi]\Big)\cdot\Big(\sum_{j=1}^lB^L_j[\nabla^j\rho_i,\nabla^{l-j}\varphi]\Big)\\&
 - 2 \int_{Q_i} D^2\tW(\bar{U}(x_i))\mathcal{B}(\rho_i\varphi)\cdot\Big(\sum_{j=1}^lB^L_j[\nabla^j\rho_i,\nabla^{l-j}\varphi]\Big)
 =: I + II + III.
\end{align*}
By \eqref{eq:2} and \eqref{eq:2b}, we find that
\[
I \geq c_0 \int_{Q_i}|\mathcal{B}(\rho_i\varphi)|^2,\quad II \geq -C\sum_{j=1}^l\int_{Q_i}|\nabla^{l-j}\varphi|^2
\]
where $C = C(\tW,K)$. For term $III$, Young's inequality and \eqref{eq:2b} say that
\begin{equation*}
-III  \leq c_0\delta \int_{Q_i}|\mathcal{B}(\rho_i\varphi)|^2 + C\sum_{j=1}^l\int_{Q_i}|\nabla^{l-j}\varphi|^2,
\end{equation*}
where $C = C(\tW,K,\delta)$. Putting these together we deduce that
\begin{equation}\label{eq:3}
\int_{Q_i}\rho_i^2 D^2\tW(\bar{U}(x_i))\nphi\cdot\nphi \geq c_0(1-\delta)\int_{Q_i} |\mathcal{B}(\rho_i\varphi)|^2 - C\sum_{j=1}^l\int_{Q_i}|\nabla^{l-j}\varphi|^2,
\end{equation}
for all $\bar{U}\in\mathcal{U}_K$. Applying Young's inequality again,
\begin{equation*}
\int_{Q_{i}}|\mathcal{B}(\rho_i\varphi)|^2\geq (1-\delta)\int_{Q_{i}}\rho_i^2|\nphi|^2-\frac{C}{\delta}\sum_{j=1}^l\int_{Q_{i}}|\nabla^{l-j}\varphi|^2,
\end{equation*}
where $C$ only depends on the cover. Now \eqref{eq:3} reads as,
\[
\int_{Q_i}\rho_i^2 D^2\tW(\bar{U}(x_i))\nphi\cdot\nphi \geq c_0(1-\delta)^2\int_{Q_i} \rho_i^2|\mathcal{B}\varphi|^2 - C(\delta)\sum_{j=1}^l\int_{Q_{i}}|\nabla^{l-j}\varphi|^2.
\]
After summing up, \eqref{eq:1} results in
\begin{equation*}
\int_{Q} D^2\tW(\bar{U}(x))\nphi\cdot\nphi \geq c_0(1-\delta)^3\int_{Q} |\mathcal{B}\varphi|^2 - C(\delta)\sum_{j=1}^l\int_Q|\nabla^{l-j}\varphi|^2.
\end{equation*}
This concludes the proof.
\end{proof}

We next prove a central proposition which is an equivalent characterisation of $\mathcal{A}$-quasiconvexity in $\overline{B(0,K)}$. It can be seen as a limiting version of a G\aa rding inequality which replaces the $\mathcal{A}$-quasiconvexity condition in the proof of Theorem \ref{thm:L^2small}. Its proof follows \cite{JudithR, JudithKostas} and relies on an observation in \cite{Zhang} that smooth extremals are spatially local minimisers.

\begin{proposition}\label{prop:2}
Let $\left(\bar{U}_k\right)_k\subset \mathcal{U}_K$, $(h_k)_k\subset W^{l,p}(Q)$, $(a_k)_k\subset \R$ such that
\begin{itemize}
\item $a_k^{-1}V(\nabla^{l-i}h_k) \rightarrow 0$ strongly in $L^2(Q)$ for all i=1,..,$l$,
\item $\left(a_k^{-1}V(\mathcal{B} h_k)\right)_k$ is bounded in $L^2(Q)$.
\end{itemize}
Then,
\[
\liminf_{k\to\infty} \frac{c_0}{4} a_k^{-2}\int_Q |V(\mathcal{B} h_k(x))|^2dx \leq \liminf_{k\to\infty} a_k^{-2}\int_Q \tW(\bar{U}_k(x)+\mathcal{B}h_k(x)|\bar{U}_k(x))dx.
\]
\end{proposition}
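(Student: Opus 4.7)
I would first pass to a (non-relabelled) subsequence along which both $\liminf$s are realised as limits; assuming the right-hand limit is finite (otherwise there is nothing to prove), Arzel\`a--Ascoli on $\mathcal{U}_K$ (uniformly bounded, equicontinuous) gives $\bar U\in\mathcal{U}_K$ with $\bar U_k\to\bar U$ uniformly on $\overline Q$. Setting $g_k:=a_k^{-1}h_k$, the first hypothesis translates to $\nabla^{l-i}g_k\to 0$ strongly in $L^2(Q)$ for $i=1,\dots,l$, whilst the second gives $\mathcal{B}g_k$ bounded in $L^2(Q)$; testing against $\mathcal{A}^*$-adjoints identifies the weak $L^2$-limit of $\mathcal{B}g_k$ as zero. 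Choosing a scale parameter $r_k\in(0,1)$ so that $(r_k\mathcal{B}g_k)$ is bounded in $L^p(Q)$, Lemma \ref{Dec} yields $\mathcal{B}g_k=\mathcal{B}f_k+\mathcal{B}b_k$ with $|\mathcal{B}f_k|^2,|r_k\mathcal{B}f_k|^p$ equiintegrable and $\mathcal{B}b_k,\,r_k\mathcal{B}b_k\to 0$ in measure; taking $f_k$, $b_k$ as $\mathbb{B}^\dagger$-primitives ensures $\|\nabla^{l-j}f_k\|_{L^2}\lesssim\|\mathcal{B}f_k\|_{W^{-1,2}}\to 0$ by the compact embedding $L^2\hookrightarrow W^{-1,2}$.

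\textbf{Splitting the excess.} The pointwise algebraic identity
\begin{align*}
\tW(\lambda+u+v\,|\,\lambda) &= \tW(\lambda+u\,|\,\lambda)+\tW(\lambda+v\,|\,\lambda) \\
&\quad + \bigl[\tW(\lambda+u+v\,|\,\lambda+v)-\tW(\lambda+u\,|\,\lambda)\bigr]+\bigl[D\tW(\lambda+v)-D\tW(\lambda)\bigr]\cdot u,
\end{align*}
applied with $\lambda=\bar U_k(x)$, $u=a_k\mathcal{B}f_k(x)$, $v=a_k\mathcal{B}b_k(x)$, produces two diagonal main terms and two remainders. After dividing by $a_k^2$, I expect both remainders to vanish in the limit: for the first, split $Q$ into $\{|a_k\mathcal{B}b_k|\leq\eta\}$ and its complement (the latter of vanishing measure); on the good set, Lemma \ref{lemma:technical_general}(b) (with $\lambda_1=\lambda+v,\lambda_2=\lambda$) gives pointwise control by $\delta|V(a_k\mathcal{B}f_k)|^2$, whose scaled integral stays $O(\delta)$ thanks to the two-part structure of the $V$-equiintegrability; on the bad set, Lemma \ref{lemma:technical_general}(a) together with equiintegrability closes the bound. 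The linear cross term is treated similarly via Young's inequality, $\mathcal{B}f_k\rightharpoonup 0$ in $L^2$, and the pointwise growth $|D\tW(\lambda+v)-D\tW(\lambda)|\lesssim|v|+|v|^{p-1}$ coming from (H3).

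\textbf{Lower bounds and main obstacle.} For the concentrating diagonal, Lemma \ref{lemma:technical_general}(c) yields $\tW(\lambda+v|\lambda)\geq C|v|^p-\tilde C|v|^2$, producing a non-negative $|V(\mathcal{B}b_k)|^p$-type contribution modulo an absorbable remainder. For the oscillating diagonal, uniform continuity of $\bar U$ on $\overline Q$ combined with uniform convergence $\bar U_k\to\bar U$ permits a finite partition of unity $\{\rho_i^2\}$ subordinate to cubes $\{Q_i\}$ small enough that $|\bar U_k(x)-\bar U(x_i)|<R(\delta)$ on $Q_i$ for large $k$; Lemma \ref{lemma:technical_general}(b) then lets me replace $\bar U_k(x)$ by the constant $\bar U(x_i)\in\overline{B(0,K)}$ at the cost of $\delta|V(u)|^2$. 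On each $Q_i$ I would then follow the commutator calculation of Proposition \ref{prop:1}: write $\rho_i\mathcal{B}f_k=\mathcal{B}(\rho_i f_k)-\sum_{j=1}^l B^L_j[\nabla^j\rho_i,\nabla^{l-j}f_k]$, apply strong $\mathcal{A}$-quasiconvexity of $\tW$ at $\bar U(x_i)$ with constant $c_0/2$ (Lemma \ref{lemma:tWall}(1)) to the compactly supported $\rho_i f_k$, and absorb the commutator remainders via $\sum_j\|\nabla^{l-j}f_k\|_{L^2}^2\to 0$ (this is exactly where the lower-order hypothesis on $h_k$ is used essentially, not cosmetically). Summing in $i$ and sending $\delta\to 0$ produces $\tfrac{c_0}{2}\int|V(\mathcal{B}f_k)|^2-o(1)$, which combined with $|V(\mathcal{B}g_k)|^2\lesssim|V(\mathcal{B}f_k)|^2+|V(\mathcal{B}b_k)|^2$ and the concentrating-part bound yields the stated factor $c_0/4$. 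The hard part will be showing that the two Taylor cross terms vanish after the $a_k^{-2}$ rescaling; this demands careful exploitation of the two-scale structure $|V|^2=|\cdot|^2+|\cdot|^p$, the calibration of $r_k$ against $a_k$, and the interplay between the convergence in measure of $\mathcal{B}b_k$ and the equiintegrability of $\mathcal{B}f_k$.
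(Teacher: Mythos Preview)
Your approach is substantially different from the paper's and contains genuine gaps.

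\textbf{Gap 1: the concentrating diagonal.} You propose to bound $a_k^{-2}\tilde W(\bar U_k+a_k\mathcal{B}b_k\,|\,\bar U_k)$ from below using Lemma~\ref{lemma:technical_general}(c). After rescaling this gives
\[
a_k^{-2}\tilde W(\bar U_k+a_k\mathcal{B}b_k\,|\,\bar U_k)\;\geq\; C\,a_k^{p-2}|\mathcal{B}b_k|^p - \tilde C\,|\mathcal{B}b_k|^2.
\]
But $\int_Q|\mathcal{B}b_k|^2$ is merely bounded, not $o(1)$: the whole point of $b_k$ being the concentrating part is that $|\mathcal{B}b_k|^2$ is \emph{not} equiintegrable, so $\mathcal{B}b_k\to 0$ in measure does not force $\|\mathcal{B}b_k\|_{L^2}\to 0$. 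The negative quadratic term is therefore not absorbable, and your lower bound for this diagonal fails.

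\textbf{Gap 2: the oscillating diagonal.} You invoke ``the commutator calculation of Proposition~\ref{prop:1}''. That calculation works because the second variation is quadratic: $\rho_i^2\,D^2\tilde W(\lambda)u\cdot u = D^2\tilde W(\lambda)(\rho_i u)\cdot(\rho_i u)$, which lets you pull $\rho_i$ inside and then apply Lemma~\ref{lemma:tWall}(2) to the compactly supported field. For the full nonlinear excess the identity $\rho_i^2\,\tilde W(\lambda+u\,|\,\lambda)=\tilde W(\lambda+\rho_i u\,|\,\lambda)$ is false, so there is no way to convert $\sum_i\rho_i^2\,\tilde W(\bar U(x_i)+a_k\mathcal{B}f_k\,|\,\bar U(x_i))$ into a sum of quasiconvexity inequalities over compactly supported test functions. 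The partition-of-unity route of Proposition~\ref{prop:1} does not transfer.

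\textbf{What the paper does instead.} The paper does not decompose at all. It first proves the \emph{nonlinear} local estimate: for every cube $Q(x_0,R)$ with $R$ small (depending only on $K$, via Lemma~\ref{lemma:technical_general}(b)) and every $\varphi\in W^{l,p}_0(Q(x_0,R))$,
\[
\int_{Q(x_0,R)}\tilde W(\bar U(x)+\mathcal{B}\varphi\,|\,\bar U(x))\;\geq\;\frac{c_0}{4}\int_{Q(x_0,R)}|V(\mathcal{B}\varphi)|^2,
\]
by freezing $\bar U$ at $x_0$ and using strong $\mathcal{A}$-quasiconvexity (Lemma~\ref{lemma:tWall}(1)). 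It then takes a finite cover by such cubes, chooses indicator-type cutoffs $\rho_j$ with $\mathbbm{1}_{Q(x_j,\lambda r_j)}\leq\rho_j\leq\mathbbm{1}_{Q(x_j,r_j)}$, and applies the local estimate directly to $\varphi=\rho_j h_k$. On the inner cube $\rho_j h_k=h_k$, so one recovers the desired integrand exactly; the mismatch on the annulus $Q(x_j,r_j)\setminus Q(x_j,\lambda r_j)$ is bounded via Lemma~\ref{lemma:technical_general}(a) and the commutator terms $B^L_j[\nabla^j\rho_j,\nabla^{l-j}h_k]$ vanish by the first hypothesis. The annular contributions are killed by passing to the weak-$*$ limit $a_k^{-2}|V(\mathcal{B}h_k)|^2\mathcal{L}^d\mres Q\overset{*}{\rightharpoonup}\mu$, choosing the cover so that $\mu(\partial Q(x_j,r_j))=0$, and letting $\lambda\to 1$.

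In short: no Decomposition Lemma, no algebraic splitting, no separate treatment of oscillation and concentration---just a direct cutoff applied to $h_k$ together with a measure-theoretic boundary-layer argument.
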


\begin{proof}


Observe that by Lemma \ref{lemma:technical_general} (b), letting $\delta = c_0/4$ we find $R=R(c_0,\tW,K)$ such that for all $\bar{U}\in\mathcal{U}_K$ and whenever $|x-x_0|<R$
\[
|\tW(\bar{U}(x)+z|\bar{U}(x)) - \tW(\bar{U}(x_0)+z|\bar{U}(x_0))|\leq \frac{c_0}{4} |V(z)|^2.
\]
Indeed, since $\bar U\in\mathcal{U}_K$, this follows by the assumed growth on $W$, (H3). In particular,
for $x\in Q(x_0,R)$, let $z=\mathcal{B}\varphi(x)$ where $\varphi\in W^{l,p}_0(Q(x_0,R))$ and integrate to find that
\begin{align}
\int_{Q(x_0,R)} \tW(\bar{U}(x)+\mathcal{B}\varphi|\bar{U}(x)) & \geq \int_{Q(x_0,R)} \tW(\bar{U}(x_0)+\mathcal{B}\varphi|\bar{U}(x_0)) - \frac{c_0}{4}|V(\mathcal{B}\varphi)|^2 \nonumber \\
& \geq \frac{c_0}{4}\int_{Q(x_0,R)}|V(\mathcal{B}\varphi)|^2,\label{lemma:Zhang}
\end{align}
by the strong $\mathcal{A}$-quasiconvexity of $\tW$ in $\overline{B(0,K)}$, see Lemma \ref{lemma:tWall}, and the fact that $\int_{Q(x_0,R)} D\tW(\bar{U}(x_0))\cdot\mathcal{B}\varphi = 0$.
Next, note that since $\left(a_k^{-1}V(\mathcal{B} h_k)\right)_k$ is bounded in $L^2(Q)$ we may assume that (up to a subsequence)
\[
a_k^{-2} |V(\mathcal{B} h_k)|^2 \mathcal{L}^d\mres Q \overset{\ast}\rightharpoonup \mu,\quad\mbox{in }\mathcal{M}(\overline{Q}) = \left(C(\overline{Q})\right)^\ast.
\]
Since $\mu$ is a positive measure, there can be at most a countable number of hyperplanes parallel to the coordinate axes which admit non-null $\mu$-measure. Hence, we can extract a finite cover of $Q$ by cubes $Q(x_j,r_j)$ with the property that $r_j<R$, so that \eqref{lemma:Zhang} applies and that
\begin{equation}\label{eq:muzero}
\mu(\overline{Q}\cap \partial Q(x_j,r_j)) = 0.
\end{equation}
Next, consider cut-off functions $\rho_j \in C^\infty_c(Q(x_j,r_j))$ such that for $\lambda\in(0,1)$
\[
\mathbbm{1}_{Q(x_j,\lambda r_j)} \leq \rho_j \leq \mathbbm{1}_{Q(x_j, r_j)},\,\, \|\nabla^i\rho_j\|_{L^{\infty}(Q)}\leq \frac{C}{(1-\lambda)^i},
\]
for i=1,..,$l$. Let $\varphi = \rho_j h_k\in W^{l,p}_0(Q(x_j,r_j))$ in \eqref{lemma:Zhang} to find that
\[
\frac{c_0}{4}\int_{Q(x_j,r_j)}|V(\mathcal{B}(\rho_j h_k))|^2 \leq \int_{Q(x_j,r_j)}\tW(\bar{U}_k+\mathcal{B}(\rho_jh_k)|\bar{U}_k),
\]
where $\bar{U}_k\in\mathcal{U}_K$. Thus, by Lemma \ref{lemma:technical_general} (a) and for $C=C(\tW,K)$,
\begin{multline*}
\frac{c_0}{4}\int_{Q(x_j,\lambda r_j)}|V(\mathcal{B}h_k)|^2 + \frac{c_0}{4}\int_{Q(x_j,r_j)\setminus Q(x_j,\lambda r_j)}|V(\mathcal{B}(\rho_j h_k))|^2 \\
\leq \int_{Q(x_j,\lambda r_j)}\tW(\bar{U}_k+\mathcal{B}h_k|\bar{U}_k)+ \int_{Q(x_j,r_j)\setminus Q(x_j,\lambda r_j)}\tW(\bar{U}_k+\mathcal{B}(\rho_jh_k)|\bar{U}_k) \\
\leq \int_{Q(x_j,\lambda r_j)} \tW(\bar{U}_k+\mathcal{B}h_k|\bar{U}_k)+ C\int_{Q(x_j,r_j)\setminus Q(x_j,\lambda r_j)}|V(\mathcal{B}(\rho_j h_k))|^2.
\end{multline*}
Since the second term on the left-hand side is positive, we infer that
\begin{multline*}
\frac{c_0}{4}\int_{Q(x_j,\lambda r_j)}|V(\mathcal{B}h_k)|^2 \leq \int_{Q(x_j,\lambda r_j)} \tW(\bar{U}_k+\mathcal{B}h_k|\bar{U}_k) \\
+ C\int_{Q(x_j,r_j)\setminus Q(x_j,\lambda r_j)}|V(\mathcal{B} h_k)|^2 + \sum_{i=1}^l\left| V\left(\frac{\nabla^{l-i} h_k}{(1-\lambda)^i}\right)\right|^2,
\end{multline*}
as $\rho_j\in[0,1]$. Summing over $j$, we deduce that
\begin{multline*}
\frac{c_0}{4}\int_{Q}|V(\mathcal{B}h_k)|^2 - \frac{c_0}{4}\sum_j \int_{Q(x_j,r_j)\setminus Q(x_j,\lambda r_j)}|V(\mathcal{B} h_k)|^2\\
\leq \int_{Q} \tW(\bar{U}_k+\mathcal{B}h_k|\bar{U}_k) - \sum_j \int_{Q(x_j,r_j)\setminus Q(x_j,\lambda r_j)} \tW(\bar{U}_k+\mathcal{B}h_k|\bar{U}_k)\\
+ C\sum_j \int_{Q(x_j,r_j)\setminus Q(x_j,\lambda r_j)} |V(\mathcal{B} h_k)|^2 + \sum_{i=1}^l\left| V\left(\frac{\nabla^{l-i} h_k}{(1-\lambda)^i}\right)\right|^2,
\end{multline*}
so that by Lemma \ref{lemma:technical_general} (a),
\begin{multline*}
\frac{c_0}{4}\int_{Q}|V(\mathcal{B}h_k)|^2 \leq \int_{Q} \tW(\bar{U}_k+\mathcal{B}h_k|\bar{U}_k)\\
+ C\sum_j \int_{Q(x_j,r_j)\setminus Q(x_j,\lambda r_j)} |V(\mathcal{B} h_k)|^2 + \sum_{i=1}^l\left| V\left(\frac{\nabla^{l-i} h_k}{(1-\lambda)^i}\right)\right|^2.
\end{multline*}
Next, multiply by $a_k^{-2}$ and take the limit $k\to\infty$ to obtain
\begin{multline*}
\liminf_{k\to\infty} \frac{c_0}4 a_k^{-2} \int_Q |V(\mathcal{B} h_k)|^2 \leq \liminf_{k\to\infty} a_k^{-2} \int_Q\tW(\bar{U}_k+\mathcal{B}h_k|\bar{U}_k)\\
+ C\limsup_{k\to\infty} \sum_j \int_{Q(x_j,r_j)\setminus Q(x_j,\lambda r_j)} a_k^{-2}|V(\mathcal{B} h_k)|^2 + a_k^{-2}\sum_{i=1}^l\left| V\left(\frac{\nabla^{l-i}h_k}{(1-\lambda)^i}\right)\right|^2\\
\leq \liminf_{k\to\infty} a_k^{-2} \int_Q\tW(\bar{U}_k+\mathcal{B}h_k|\bar{U}_k)\\
+ C\sum_j \mu\left(\overline{Q}\cap\left(\overline{Q(x_j,r_j)}\setminus Q(x_j,\lambda r_j)\right)\right),
\end{multline*}
since $a_k^{-1}V(\nabla^{l-i}h_k)\rightarrow 0$ in $L^2(Q)$ and $a_k^{-2} |V(\mathcal{B} h_k)|^2 \mathcal{L}^d\mres Q \overset{\ast}\rightharpoonup \mu$ in $\mathcal{M}(Q)$. Take the limit $\lambda\to 1$ to complete the proof after noting \eqref{eq:muzero}.
\end{proof}

We may now prove Theorem \ref{thm:L^2small}. Note that all primitive functions constructed in the proof are $\mathbb{B}^\dagger$-primitives and satisfy the bounds of Lemma \ref{Ptype}. Otherwise, the loss of control of the full Sobolev norm, prevents the application of Proposition \ref{prop:2}.

\begin{proof}[Proof of Theorem \ref{thm:L^2small}]
We show the following: there exists $\eps_0>0$ such that for all $\psi\in L^{p}(Q)$, $\mathcal{A}$-free and zero-average, and $\bar{U}\in\mathcal{U}_K$, $\|\psi\|_{W^{-1,p}(Q)}<\eps_0$ implies that
\begin{equation}\label{eq:tGF>0}
\int_Q\tW(\bar{U}+\mathcal{B}\varphi|\bar{U}) + \frac{c_1}{2}\sum_{i=1}^l\int_Q|\nabla^{l-i}\varphi|^2 \geq 0,
\end{equation}
where $\varphi$ is the $\mathbb{B}^\dagger$-primitive of $\psi$ whose existence is guaranteed by Lemma \ref{Ptype}. Then, since $f(z)=|V(z)|^2$ is strongly convex, Lemma \ref{lemma:technical_general} (d) says that for $C=C(p,K)$
\begin{align*}
C\int_Q |V(\nphi)|^2 & \leq
c_2\int_Q  f(\bar{U}+\mathcal{B}\varphi|\bar{U}) \\
& \leq \int_Q W(\bar{U}+\mathcal{B}\varphi|\bar{U}) + \frac{c_1}{2}\sum_{i=1}^l\int_Q|V(\nabla^{l-i}\varphi)|^2.
\end{align*}
This concludes the proof of Theorem \ref{thm:L^2small} since by Lemma \ref{Ptype} $(iv)$,
\[
\sum_{i=1}^l\int_Q|V(\nabla^{l-i}\varphi)|^2=\|\varphi\|^2_{W^{l-1,2}(Q)}+\|\varphi\|^p_{W^{l-1,p}(Q)}\leq C\|\mathcal{B}\varphi\|^2_{W^{-1,(2,p)}}.
\]

We proceed to prove \eqref{eq:tGF>0} by contradiction. Suppose \eqref{eq:tGF>0} is false. Then, there exist $(\bar{U}_k)_k\subset \mathcal{U}_K$ and pairs $(\psi_k,\varphi_k)\subseteq L^p(Q)\times W^{l,p}(Q)$  with
\[
\|\psi_k\|_{W^{-1,p}(Q)} \rightarrow 0,\, \bar{U}_k \overset{\ast}{\rightharpoonup} \bar{U} \mbox{ in }L^{\infty}(Q)
\]
such that
\begin{equation}\label{eq:6}
\int_Q \tW(\bar{U}_k+\mathcal{B}\varphi_k|\bar{U}_k) + \frac{c_1}2\sum_{i=1}^l\int_Q|\nabla^{l-i}\varphi_k|^2 <0,
\end{equation}
where $\varphi_k$ is the $\mathbb{B}^\dagger$-primitive of $\psi_k$. Note that $\|\varphi_k\|_{W^{l-1,p}}\lesssim \|\psi_k\|_{W^{-1,p}} \rightarrow 0$ by Lemma \ref{Ptype} $(iv)$, and $\bar{U}_k\rightarrow \bar{U}$ in $C^0(Q)$ by the Arzel\`a-Ascoli theorem with $\bar{U}\in\mathcal{U}_K$.
We split the proof into 5 steps.\\\quad\\
\underline{Step 1}: Let $\alpha_k = \|\nphi_k\|_{L^2(Q)}$, $\beta_k = \|\nphi_k\|_{L^p(Q)}$. We show that $\alpha_k,\,\beta_k\to 0$, as $k\to\infty$ and
\begin{equation}\label{eq:Lambdabounded}
\sup_k \frac{\beta_k^p}{\alpha_k^2} =:\Lambda <\infty.
\end{equation}

To show that $\alpha_k,\,\beta_k\to 0$, recall that, by Lemma \ref{lemma:tWall} (a), $\tW$ is $p$-coercive and, as in the proof of Theorem \ref{theorem:2} with $\tW$ instead of $W$, we may estimate
\begin{align*}
\tW(\bar{U}_k+\mathcal{B}\varphi_k|\bar{U}_k) \geq  - C(\tW,K) + c  |\mathcal{B}\varphi_k|^p,
\end{align*}
which, combined with \eqref{eq:6}, states that
$(\mathcal{B}\varphi_k)_k$ is bounded in $L^{p}(Q)$.
By Proposition \ref{prop:2} with $a_k=1$ and $h_k = \varphi_k$, since
\[
a_k^{-1}V(\nabla^{l-i}h_k) = V(\nabla^{l-i}\varphi_k) \rightarrow 0\mbox{ in }L^2(Q),\,\,\forall\,i=1,..,l
\]
and $(a_k^{-1}V(\mathcal{B} h_k))_k = (V(\mathcal{B}\varphi_k))_k$ is bounded in $L^2(Q)$, we find that
\begin{align*}
\liminf_{k\to\infty} \frac{c_0}4\int_Q|V(\nphi_k)|^2  \leq \liminf_{k\to\infty}\int_Q\tW(\bar{U}_k+\mathcal{B}\varphi_k|\bar{U}_k)\leq 0
\end{align*}
by \eqref{eq:6}. In particular, up to a subsequence, $\alpha_k$, $\beta_k\to 0$.
Regarding the bound on $\beta_k^p/\alpha_k^2$, Lemma \ref{lemma:technical_general} (c) and the coercivity of $\tW$ imply that
\begin{equation}\label{eq:new_coercivity}
\tW(\bar{U}_k+\mathcal{B}\varphi_k|\bar{U}_k) \geq d |\nphi_k|^p - c |\nphi_k|^2,
\end{equation}
for constants $d,\,c>0$ uniform for $\bar{U}\in\mathcal{U}_K$. Dividing by $\alpha_k^2$, we infer that
\[
d \frac{\beta_k^p}{\alpha_k^2} - c \leq \alpha_k^{-2} \int_Q  \tW(\bar{U}_k+\mathcal{B}\varphi_k|\bar{U}_k) < 0,
\]
by \eqref{eq:6} which concludes Step 1.
Note that \eqref{eq:6} implies that $\alpha_k\neq 0$.\\\quad\\
\underline{Step 2}: Following \cite{JudithR, JudithKostas, GM09}, we decompose the normalised sequence
\[
w_k := \alpha_k^{-1} \varphi_k.
\]
Moreover, $\|\mathcal{B}w_k\|_{L^2(Q)} = 1$,  $\int_Q\mathcal{B}w_k=0$,  $\mathcal{A}(\mathcal{B}w_k) = 0$ and we can find $w\in W^{l,p}(Q)$ such that $\mathcal{B}w_k\rightharpoonup \mathcal{B}w$ in $L^{2}(Q)$. Setting 
\[
\eta_k = \frac{\alpha_k}{\beta_k}\in(0,1],
\]
we also infer that $(\eta_k\mathcal{B}w_k)_k$ is bounded in $L^{p}(Q)$ with $\|\eta_k\mathcal{B}w_k\|_{L^p} = 1$. We may thus apply Lemma \ref{Dec} to find $\mathbb{B}^\dagger$-primitives $f_k,\,b_k \in W^{l,2}(Q)$ such that
\begin{enumerate}
\item[(a)] $\mathcal{B}w_k = \mathcal{B}w + \mathcal{B}f_k + \mathcal{B}b_k$;
\item[(b)] $\mathcal{B}f_k\rightharpoonup 0$,  $\mathcal{B} b_k \rightharpoonup 0$ in $L^{2}(Q)$, and $\eta_k \mathcal{B}f_k\rightharpoonup 0$, $\eta_k\mathcal{B} b_k \rightharpoonup 0$ in $L^{p}(Q)$;
\item[(c)] $\mathcal{B} b_k \rightarrow 0$ and $\eta_k \mathcal{B} b_k \rightarrow 0$ in measure;
\item[(d)] $\left(|\mathcal{B} f_k|^2\right)_k$ and $\left(|\eta_k \mathcal{B} f_k|^p\right)_k$ are equiintegrable.
\end{enumerate}
Write
\begin{equation}\label{eq:7}
g_k(x) = \alpha_k^{-2}\left[\tW(\bar{U}_k+\alpha_k\mathcal{B}w_k|\bar{U}_k)-\tW(\bar{U}_k+\alpha_k\mathcal{B} b_k|\bar{U}_k)\right]
\end{equation}
and note that, since $\alpha_kw_k = \varphi_k$,
\begin{equation*}\label{eq:part}
\begin{split}
&\int_Q g_k(x) + \alpha_k^{-2} \tW(\bar{U}_k+\alpha_k\mathcal{B} b_k|\bar{U}_k) + \frac{c_1}2\sum_{i=1}^l\int_Q|\nabla^{l-i}w_k|^2 \\ 
=& \alpha_k^{-2}\left(\int_Q \tW(\bar{U}_k+\mathcal{B} \varphi_k|\bar{U}_k) + \frac{c_1}2\sum_{i=1}^l\int_Q|\nabla^{l-i}\varphi_k|^2  \right)< 0. 
\end{split}
\end{equation*}
The idea in the proof of \cite{JudithR} is to show that quasiconvexity forces the contribution of the concentrating part $\alpha_k^{-2} \int_Q \tW(\bar{U}_k+\alpha_k\mathcal{B} b_k|\bar{U}_k)$ to be nonnegative and thus the contribution of the oscillating part $\int_Q g_k$ must be negative by \eqref{eq:part}. Step 4, shows that the latter bounds a Young measure version of the second variation which is hence itself negative. This contradicts Proposition \ref{prop:1} in Step 5, noting that this is the only point where Proposition \ref{prop:1} is used.\\\quad\\
\underline{Step 3}: In this step we show that the contribution of the concentrating part must be nonnegative in the limit due to $\mathcal{A}\text{-quasiconvexity}$. In particular, we prove that
\begin{equation}\label{eq:7b}
\liminf_{k\to\infty}\alpha_k^{-2} \int_Q \tW(\bar{U}_k+\alpha_k\mathcal{B} b_k|\bar{U}_k) \geq 0,
\end{equation}
as a consequence of Proposition \ref{prop:2}. Combined with \eqref{eq:part} and the fact that  $\nabla^{l-i}w_k\rightarrow \nabla^{l-i}w$ for all i=1,..,$l$ strongly in $L^2(Q)$, this says that
\begin{equation}\label{eq:8}
\frac{c_1}2\sum_{i=1}^l\int_Q|\nabla^{l-i}w|^2 + \liminf_{k\to\infty} \int_Q g_k(x) \leq 0.
\end{equation}
To prove \eqref{eq:7b}, simply apply Proposition \ref{prop:2} with $a_k = \alpha_k$ and $h_k = \alpha_k b_k$ after noting that
\[
\alpha_k^{p-2} = \frac{\beta_k^p}{\alpha_k^2}\eta_k^p = \Lambda \eta_k^p,
\]
where, by Step 1, $\Lambda = \beta_k^p/\alpha_k^2$ is bounded. Thus, again due to the control of the full Sobolev norm of the $\mathbb{B}^\dagger$-primitives $b_k$, Lemma \ref{Ptype}, we infer that
\[
a_k^{-2}|V(\alpha_k\nabla^{l-i}b_k)|^2 = |\nabla^{l-i}b_k|^2 + \Lambda |\eta_k \nabla^{l-i}b_k|^p \rightarrow 0\mbox{ in }L^1(Q),
\]
for i=1,..,$l$. Also, $\Big(a_k^{-2}|V(\mathcal{B} h_k)|^2\Big)_k =\Big( |\mathcal{B} b_k|^2 + \Lambda |\eta_k \mathcal{B} b_k|^p\Big)_k$  is bounded in $L^1(Q)$. So, Proposition \ref{prop:2} says that
\begin{align*}
0\leq \liminf_{k\to\infty} \frac{c_0}4\int_Q\alpha_k^{-2} |V(\alpha_k \mathcal{B} b_k)|^2 \leq \liminf_{k\to\infty} \alpha_k^{-2}\int_Q\tW(\bar{U}_k+\alpha_k\mathcal{B} b_k|\bar{U}_k).
\end{align*}
\underline{Step 4}: Next, consider the $\mathcal{A}$-p-Young measure generated by the sequence $\mathcal{B}w_k$, say $\nu = (\nu_x)_{x\in Q}$, and recall that $\bar{U}_k \rightarrow \bar{U}$ in $C^0(Q)$. In this Step we show that
\begin{equation}\label{eq:8a}
\frac12 \int_Q \langle \nu_x, D^2\tW(\bar{U}(x))z\cdot z\rangle \leq \liminf_{k\to\infty} \int_Q g_k(x).
\end{equation}
In particular, in conjunction with \eqref{eq:8}, we infer that
\begin{equation}\label{eq:9}
\frac{1}2\sum_{i=1}^l\int_Q c_1 |\nabla^{l-i}w|^2 +  \langle \nu_x, D^2\tW(\bar{U}(x))z\cdot z\rangle \leq 0.
\end{equation}
In Step 5 we show how \eqref{eq:9} leads to a contradiction. 

To show \eqref{eq:8a} we first prove the equiintegrability of $(g_k)_k$ in \eqref{eq:7}. By Lemma \ref{lemma:technical_general} (a) and a constant $C=C(\tW,K)$, Young's inequality gives
\begin{align*}
|g_k| & \leq C(|\mathcal{B}w_k|+ |\mathcal{B} b_k| + \alpha_k^{p-2}|\mathcal{B}w_k|^{p-1} + \alpha_k^{p-2}|\mathcal{B} b_k|^{p-1})|\mathcal{B}w_k - \mathcal{B} b_k|\\
& \leq C\delta (|\mathcal{B}w_k|^2+ |\mathcal{B} b_k|^2) + C(\delta)| \mathcal{B}(w+f_k)|^2\\
&\quad + C\delta(\alpha_k^{p-2}|\mathcal{B}w_k|^p+ \alpha_k^{p-2}|\mathcal{B} b_k|^p) + C(\delta) \alpha_k^{p-2} | \mathcal{B}(w+f_k)|^p,
\end{align*}
recalling that, by Lemma \ref{Dec}, $\mathcal{B}w_k - \mathcal{B} b_k = \mathcal{B}(w+f_k)$. However, by the same lemma, $(\mathcal{B}w_k)_k$ and $(\mathcal{B}b_k)_k$ are bounded in $L^{2}(Q)$, \text{and} $(|\mathcal{B}(w+f_k)|^2)_k$ is equiintegrable. Similarly, $\alpha_k^{p-2} = \Lambda \eta_k^p$, where, by Step 1, $\Lambda = \beta_k^p/\alpha_k^2$ is bounded. Thus
$(\alpha_k^{p-2}|\mathcal{B}w_k|^p)_k$ and $(\alpha_k^{p-2}|\mathcal{B} b_k|^p)_k$ are bounded in $L^1(Q)$ and $(\alpha_k^{p-2} | \mathcal{B}(w+f_k)|^p)_k$ is equiintegrable. Hence,
given a set $A\subset Q$
\[
|g_k|\leq \delta C + C(\delta)\int_A|\mathcal{B}(w+f_k)|^2 + C(\delta)\int_A\alpha_k^{p-2} | \mathcal{B}(w+f_k)|^p
\]
and so $(g_k)_k$ is also equiintegrable. Then, for $\eps > 0$ fixed, we can find $m_\eps$ such that
\[
\int_{\{|\mathcal{B}w_k|\geq m\}\cup\{|\mathcal{B} b_k|\geq m\}} |g_k| < \eps,\mbox{ for all } m\geq m_\eps.
\]
This indeed follows from the fact that $\mathcal{B} b_k\rightarrow 0$ in measure and that
\[
\lim_{R\to\infty} \sup_k \Big|\{x\in Q : |\mathcal{B}w_k(x)|>R\}\Big| = 0,
\]
where the last relation comes from Chebyshev's inequality. Hence,
\begin{equation}\label{eq:10}
\int_Q g_k > -\eps + \int_{\{|\mathcal{B}w_k| < m\}\cap\{|\mathcal{B} b_k|< m\}} g_k,\mbox{ for all }\, m\geq m_\eps.
\end{equation}
By choosing $m_\eps$ larger if necessary, we also assume that
\begin{equation}\label{eq:11}
\left|\int_Q \langle \nu_x, D^2\tW(\bar{U})z\cdot z\,\mathbbm{1}_{\Rdd\setminus B(0,m)}(z)\rangle\right| < \eps,\mbox{ for all }\, m\geq m_\eps,
\end{equation}
where $\mathbbm{1}_{A}$ denotes the indicator function of a set $A\subset\Rdd$. Note that \eqref{eq:11} indeed holds true since $\int_Q \langle\nu_x, |z|^2\rangle < \infty$ and 
\begin{multline*}
\left|\int_Q \langle \nu_x, D^2\tW(\bar{U})z\cdot z\mathbbm{1}_{\Rdd\setminus B(0,m)}(z)\rangle\right| \leq C(W,K) \int_Q |\langle \nu_x, |z|^2\mathbbm{1}_{\Rdd\setminus B(0,m)}(z) \rangle|\\
= C(W,K) \int_Q |\langle \nu_x, |z|^2\rangle - \langle \nu_x, |\xi|^2\mathbbm{1}_{ B(0,m)}(z) \rangle|,
\end{multline*}
so \eqref{eq:11} follows by monotone and dominated convergence. In particular, \eqref{eq:11} says that for all $m\geq m_\eps$
\begin{equation}\label{eq:12}
\int_Q \langle \nu_x, D^2\tW(\bar{U})z\cdot z \rangle \leq \int_Q \langle \nu_x, D^2\tW(\bar{U})z\cdot z \mathbbm{1}_{B(0,m)}(z) \rangle + \eps.
\end{equation}
However, $\mathbbm{1}_{B(0,m)}$ is lower semicontinuous as the indicator function of the open ball $B(0,m)$. Thus, for all $x\in Q$ the function
\[
z\mapsto D^2\tW(\bar{U})z\cdot z \mathbbm{1}_{B(0,m)(z)}
\]
is lower semicontinuous and, since $(\mathcal{B}w_k)_k$ generates $(\nu_x)_{x\in Q}$, we infer that
\begin{multline}\label{eq:13}
\int_Q \langle \nu_x, D^2\tW(\bar{U})z\cdot z \mathbbm{1}_{B(0,m)}(z) \rangle \leq \liminf_{k\to\infty}\int_{\{|\mathcal{B}w_k|<m\}}D^2\tW(\bar{U})\mathcal{B}w_k\cdot\mathcal{B}w_k\\
= \liminf_{k\to\infty}\int_{\{|\mathcal{B}w_k|<m\}}D^2\tW(\bar{U}_k)\mathcal{B}w_k\cdot\mathcal{B}w_k.
\end{multline}
Indeed, the equality in \eqref{eq:13} follows from the fact that
\begin{multline*}
\int_{\{|\mathcal{B}w_k|<m\}}D^2\tW(\bar{U}_k)\mathcal{B}w_k\cdot\mathcal{B}w_k = \int_{\{|\mathcal{B}w_k|<m\}}D^2\tW(\bar{U})\mathcal{B}w_k\cdot\mathcal{B}w_k \\
+ \int_{\{|\mathcal{B}w_k|<m\}}\left[D^2\tW(\bar{U}_k) - D^2\tW(\bar{U})\right]\mathcal{B}w_k\cdot\mathcal{B}w_k
\end{multline*}
and that $\bar{U}_k \rightarrow \bar{U}$ in $C^0(Q)$. Combining \eqref{eq:13} with \eqref{eq:12}, for $m\geq m_\eps$,
\begin{multline}\label{eq:14}
\int_Q \langle \nu_x, D^2\tW(\bar{U})z\cdot z \rangle \leq \liminf_{k\to\infty}\int_{\{|\mathcal{B}w_k|<m\}}D^2\tW(\bar{U}_k)\mathcal{B}w_k\cdot\mathcal{B}w_k
+ \eps.
\end{multline}
To conclude the proof, we next claim that
\begin{multline}\label{eq:15}
\frac12 \liminf_{k\to\infty}\int_{\{|\mathcal{B}w_k|<m\}}D^2\tW(\bar{U}_k)\mathcal{B}w_k\cdot\mathcal{B}w_k
= \lim_{k\to\infty}\int_{\{|\mathcal{B}w_k|<m\}\cap \{|\mathcal{B} b_k|<m\}} g_k.
\end{multline}
Before proving \eqref{eq:15}, note that by \eqref{eq:14} and \eqref{eq:10}, it results in
\[
\frac12 \int_Q \langle \nu_x, D^2\tW(\bar{U})z\cdot z \rangle \leq \liminf_{k\to\infty}\int_Q g_k + \frac{3\eps}{2}.
\]
Taking $\eps\to 0$ we conclude \eqref{eq:8a} and Step 4.
We are left to prove \eqref{eq:15}. Recall that
\begin{align*}
&g_k  = \alpha_k^{-2}\left[\tW(\bar{U}_k+\alpha_k\mathcal{B} w_k|\bar{U}_k) - \tW(\bar{U}_k+\alpha_k\mathcal{B} b_k|\bar{U}_k)\right]\\
= & \int_0^1(1-s) \left[D^2\tW(\bar{U}_k+ s\alpha_k\mathcal{B}w_k)\mathcal{B}w_k\cdot\mathcal{B}w_k - D^2\tW(\bar{U}_k+ s\alpha_k\mathcal{B} b_k)\mathcal{B} b_k\cdot\mathcal{B}b_k\right].
\end{align*}
For convenience, let us write
\[
A_k :=\{x\in Q :|\mathcal{B}w_k(x)|<m\}\mbox{ and }B_k:=\{x\in Q :|\mathcal{B} b_k(x)|<m\}.
\]
Then, noting that $\int_0^1(1-s)\,ds = 1/2$
\begin{align*}
\mathbbm{1}_{A_k\cap B_k} g_k & = \mathbbm{1}_{A_k\cap B_k}\int_0^1(1-s) \left[D^2\tW(\bar{U}_k+ s\alpha_k\mathcal{B}w_k) - D^2\tW(\bar{U}_k)\right]\mathcal{B}w_k\cdot\mathcal{B}w_k\,ds\\
&\quad + \mathbbm{1}_{A_k} \frac12 D^2\tW(\bar{U}_k)\mathcal{B}w_k\cdot\mathcal{B}w_k - \mathbbm{1}_{A_k} \frac12 D^2\tW(\bar{U}_k)\mathcal{B}w_k\cdot\mathcal{B}w_k \left(1 - \mathbbm{1}_{B_k}\right)\\
&\quad - \mathbbm{1}_{A_k\cap B_k} \int_0^1(1-s) D^2\tW(\bar{U}_k+ s\alpha_k\mathcal{B} b_k)\mathcal{B} b_k\cdot\mathcal{B} b_k \,ds\\
& =: I_1^k + I_2^k + I_3^k + I_4^k.
\end{align*}
We immediately infer that
\[
\int_Q I_2^k = \frac12 \int_{\{|\mathcal{B}w_k|<m\}}D^2\tW(\bar{U}_k)\mathcal{B}w_k\cdot\mathcal{B}w_k
\]
and in order to conclude to \eqref{eq:15} we show that 
\[
\lim_{k\to\infty} \int_Q I_1^k = \lim_{k\to\infty} \int_Q I_3^k = \lim_{k\to\infty} \int_Q I_4^k = 0.
\]
Recall that $\alpha_k\to 0$ and $\bar{U}_k \to \bar{U}$ in $C^0(Q)$. Thus, for $I_1^k$ and since we are in the set $A_k$, we find that
\[
\left|D^2\tW(\bar{U}_k+s\alpha_k\mathcal{B}w_k) - D^2\tW(\bar{U}_k)\right| \leq C(W,K) \alpha_k m^3 \to 0,\quad k\to\infty.
\]
Thus, $\int_Q I_1^k \to 0$ by dominated convergence. As for $ I_3^k$, again since $D^2\tW$ is continuous and $\|\bar{U}_k\|_{L^\infty(Q)}\leq K$, we get that
\[
|I_3^k| \leq C(W,K) m^2 \left(1 - \mathbbm{1}_{ \{ |\mathcal{B} b_k|<m\}}\right) = C(W,K) m^2  \mathbbm{1}_{\{|\mathcal{B} b_k|\geq m\}}.
\]
Hence, $\int_Q I_3^k \to 0$
as $\mathcal{B} b_k \to 0$ in measure. Lastly, for $I_4^k$, as we are in $B_k$ and $s\in(0,1)$, we get that $\bar{U}_k + s\alpha_k \mathcal{B} b_k \rightarrow \bar{U}$ uniformly as $k\to\infty$ and thus
\[
|I_4^k| \leq C(W,K)m |\mathcal{B} b_k| \to 0\mbox{ in measure}.
\]
In particular, restricting to $B_k$, $\int_Q I_4^k \to 0$ by dominated convergence. This concludes the proof of Step 4.\\\quad\\
\underline{Step 5}: We show how \eqref{eq:9} leads to a contradiction. By Lemma \ref{lemma:tWall} (2)
\[
f(x,z) := D^2\tW(\bar{U}(x))z\cdot z 
\]
is  $\mathcal{A}$-quasiconvex for each $x\in Q$. Since $(\mathcal{B}w_k)_k$ generates the $\mathcal{A}$-2-Young measure $(\nu_x)_{x\in Q}$ and $f(x,z)$ grows quadratically in $z$, Jensen's inequality for $\mathcal{A}$-quasiconvex functions \cite[Theorem 4.1.]{Fon} says that for a.e. $x\in Q$
\begin{align*}
D^2\tW(\bar{U}(x))\mathcal{B}w\cdot\mathcal{B}w = f(x,\langle\nu_x,{\rm id}\rangle)
 \leq \langle \nu_x, f(x,\cdot)\rangle = \langle\nu_x, D^2\tW(\bar{U}(x))z\cdot z\rangle.
\end{align*}
Adding $c_1\sum_{i=1}^l|\nabla^{l-i}w|^2$ on both sides and integrating over $Q$, we infer that
\begin{multline*}
c_1\sum_{i=1}^l\int_Q|\nabla^{l-i}w|^2 +\int_Q  D^2\tW(\bar{U}(x))\mathcal{B}w\cdot\mathcal{B}w  \\
\leq c_1\sum_{i=1}^l\int_Q|\nabla^{l-i}w|^2 + \langle\nu_x, D^2\tW(\bar{U}(x))z\cdot z\rangle \leq 0,
\end{multline*}
by \eqref{eq:9}. However, by Proposition \ref{prop:1}, since $w\in W^{l,p}(Q)$, we know that
\[
c_1\sum_{i=1}^l\int_Q|\nabla^{l-i}w|^2 + \int_Q D^2\tW(\bar{U}(x))\mathcal{B}w\cdot\mathcal{B}w \geq \frac{c_0}2\int_Q|\mathcal{B}w|^2,
\]
and, hence, $\mathcal{B}w = 0$ and $w= \mathcal{F}^{-1}(\mathcal{B}^{\dagger}(\cdot))\star\mathcal{B}w= 0$.
We may thus apply Proposition \ref{prop:2} with $a_k = \alpha_k$ and $h_k = \alpha_k w_k$. Recall that $\alpha_k^{p-2} = \Lambda \eta_k^p$,
where, by Step 1, $\Lambda = \beta_k^p/\alpha_k^2$ is bounded. Thus,
\[
\alpha_k^{-2}|V(\alpha_k \nabla^{l-i}w_k)|^2 = |\nabla^{l-i}w_k|^2 + \Lambda |\eta_k \nabla^{l-i}w_k|^p \rightarrow 0\mbox{ in }L^1(Q),
\]
 for i=1,..,$l$. Also, $\alpha_k^{-2}|V(\alpha_k \mathcal{B} w_k)|^2 = |\mathcal{B} w_k|^2 + \Lambda |\eta_k \mathcal{B} w_k|^p$ is bounded in $L^1(Q)$. So, recalling that $\alpha_k w_k = \varphi_k$, Proposition \ref{prop:2} says that
\begin{align*}
0 < \frac{c_0}4 & = \liminf_{k\to\infty} \frac{c_0}4 \int_Q |\mathcal{B}w_k|^2 \\
& \leq \liminf_{k\to\infty} \frac{c_0}4 \int_Q |\mathcal{B}w_k|^2 + \alpha_k^{p-2}|\mathcal{B}w_k|^p\\
& \leq \liminf_{k\to\infty}\alpha_k^{-2}\int_Q\tW(\bar{U}_k+\alpha_k\mathcal{B} w_k|\bar{U}_k)\\
& = \liminf_{k\to\infty}\alpha_k^{-2}\int_Q \tW(\bar{U}_k+\mathcal{B} \varphi_k|\bar{U}_k) + \frac{c_1}{2} \sum_{i=1}^l\int_Q|\nabla^{l-i}w_k|^2\leq 0,
\end{align*}
by \eqref{eq:6}. But $c_0>0$, concluding the proof of Theorem \ref{thm:L^2small}.
\end{proof}
\section{An application in dynamics: local stability and weak-strong uniqueness}\label{sec:involutions}

In this section, we study local stability and weak-strong uniqueness properties for general systems of conservation laws \eqref{eq:scl} possessing involutions \eqref{eq:involutions} and an $\mathcal{A}$-quasiconvex entropy. In particular, for $T>0$ and $Q=(0,1)^d$, we examine the system
\begin{equation}
  \label{eq:scl}
  \begin{aligned}
   	\partial_t U(t,x) + {\rm div}_x f(U(t,x))&=0,\quad(t,x)\in (0,T)\times Q  \\ U(0,x)&=U^0(x),\quad x\in Q
   \end{aligned}
\end{equation} 
for the unknown $Q$-periodic function $U:(0,T)\times Q\to\mathbb{R}^N$ with
\begin{equation}
\int_{Q}U(t,x)\;dx=0, \mbox{ for all } 0<t\leq T.
\end{equation}
In \eqref{eq:scl}, the flux function $f=(f_{i\al})_{(i,\al)\in\mathbb{R}^{N\times d}}:\mathbb{R}^N\to\mathbb{R}^{d\times N}$ is a given locally Lipschitz mapping.
We say that system \eqref{eq:scl} possesses an involution if there exists a linear differential operator $\mathcal{A}$ with the property that
\begin{equation}
\label{eq:involutions}
  \mathcal{A} U^0=0\,\,\Rightarrow\,\,\mathcal{A} U(t,\cdot)=0\quad\text{for all }t\in (0,T).
\end{equation}
Typical examples include the equations of elasticity and electromagnetism, see \cite{DafermosB}. Indeed, the equations of motion of a hyperelastic body in the absence of external forces take the form $y_{tt} = {\rm div}DW(\nabla y)$
where $W$ denotes the stored energy function. Upon the change of variables $v = y_t$ and $F = \nabla y$, we obtain the system
 \begin{align*}
   	\partial_t v - {\rm div}_x DW(F)&=0, \\
   	\partial_t F - \nabla v&=0,\\
   	{\rm curl}\,F&=0. \label{eq:nle}
\end{align*}
The second equation shows that $\mathcal{A} = {\rm curl}$ is an involution. Similarly, in linear elasticity, the equations take the form
 \begin{align*}
   	\partial_t u - {\rm div}_x C E&=0, \\
   	\partial_t E -  \mathcal{E}(u)&=0, \\
   	{\rm curl}\,{\rm curl}\, E &= 0,
\end{align*}
where $2 \mathcal{E}(u) = \nabla u + (\nabla u)^T$ and $\mathcal{A} = {\rm curl\,curl}$ is an involution whose kernel consists of symmetric gradients. Also, note that a natural assumption on the quadratic form $CE:E$ is convexity on the wave cone of the operator ${\rm curl\,curl}$ which, by Lemma \ref{lemma:quadratic2}, is equivalent to ${\rm curl\,curl}$-quasiconvexity. Moreover, the equations of electromagnetism in the absence of charges and currents become
\begin{align*}
   	\partial_t B + {\rm curl} E&=0, \\
   	\partial_t D - {\rm curl} H&=0,\\
   	{\rm div}\,B={\rm div}\,D&=0, 
\end{align*}
where $B$ is the magnetic induction, $D$ is the electric displacement, and $E$, $H$ are, respectively, the electric and magnetic fields. Typically, Maxwell's equations are assumed linear, however, there are relevant nonlinear theories, see \cite{Coleman}, \cite{Serre}, \cite{DafermosB}, with the so-called Maxwell's equations in the Born-Infeld medium being the most known. The reader is referred to \cite{Brenier} for a mathematical treatment.

Note that in continuum mechanics, systems like \eqref{eq:scl}, are typically supplemented with an inequality of the form
\begin{equation}\label{ecl}
    \partial_t \eta+ {\rm div}_x q\leq 0,
\end{equation}
known as the Clausius-Duhem inequality, expressing the second law of thermodynamics in this context. Mathematically, $\eta:\mathbb{R}^N\to\R$ is referred to as an entropy and $q:\R^N\to \R^d$ as an entropy flux and are assumed to satisfy
\begin{equation}\label{eq:etaq1}
\frac{\partial q_{\al}}{\partial U_i}=\frac{\partial\eta}{\partial U_j}\frac{\partial f_{j\al}}{\partial U_i}.
\end{equation}
In particular,  
\begin{equation}\label{eq:etaq2}
    \frac{\partial^2\eta}{\partial U_k\partial U_j}\frac{\partial f_{j\al}}{\partial U_i}=\frac{\partial^2\eta}{\partial U_i\partial U_j}\frac{\partial f_{j\al}}{\partial U_k}
\end{equation}
and thus Lipschitz solutions to \eqref{eq:scl} satisfy \eqref{ecl} as an equality. 

Entropies in physical systems are often convex which, combined with \eqref{eq:etaq2}, renders the system symmetrisable upon the change of variables $U\to D\eta(U)$ and hence locally well posed, see \cite{DafermosB}. At the same time, inequality \eqref{ecl} restricts admissible solutions and may rule out unphysical solutions.   

On the other hand, it is also known that convexity of the entropy may be ruled out as a consequence of physical invariance. This is precisely the case in nonlinear elasticity due to frame-indifference \cite{DafermosB}, and in electromagnetism due to Lorentz invariance \cite{Serre}. However, the presence of involutions may compensate this loss of convexity, but only in the directions where the operator $\mathcal{A}$ has elliptic behaviour. Essentially, the ``bad'' behaviour is expected to occur in the directions of the wave cone $\Lambda_\mathcal{A}$, and convexity along these directions, i.e. $\Lambda_\mathcal{A}$-convexity, may be enough to partially recover results ensured by convexity. 

Indeed, Dafermos in \cite{Daf} examined such systems endowed with a $\Lambda_\mathcal{A}$-convex entropy and, under additional assumptions on the involutions $\mathcal{A}$, recovered hyperbolicity. Moreover, he showed that local stability and weak-strong uniqueness results can also be recovered within a class of $BV$ weak solutions, if they satisfy an assumption of \emph{small local oscillations}, required to prove a G\aa rding-type inequality for $\Lambda_{\mathcal{A}}$-convex functions. In this section, we show that in fact this assumption is redundant when the entropy is $\mathcal{A}$-quasiconvex. In this sense, $\mathcal{A}$-quasiconvexity captures the structure of these systems and arises as a natural convexity condition.

We note that Maxwell's equations do not generally fall under this setting. For vector fields $B,\,D:\R^3\to\R^3$, the wave cone of $\mathcal{A}={\rm div}$ is the entire space $\R^6$ and thus $\mathcal{A}$-quasiconvexity and $\Lambda_{\mathcal{A}}$-convexity reduce to convexity. However, when $B,\,D:\R^2\to\R^3$, the wave cone is strictly smaller than $\R^6$.
Still, it is a matter of tedious computations to show that the entropy at least for the Born-Infeld medium is not even $\Lambda_{\mathcal{A}}$-convex and thus, unlike polyconvex elasticity, not convex in the null-Lagrangians of $\mathcal{A} = {\rm div}$. Nevertheless, similar to polyconvex elasticity, the system can be extended to an enlarged system that admits a convex entropy, see \cite{DafermosB,Serre}.

In the sequel, we assume that an entropy-entropy flux pair exists satisfying \eqref{eq:etaq1} and that $\eta$ satisfies the assumptions (H1)-(H4). Moreover, as in \cite{Daf}, we assume that weak solutions are bounded. We refer the reader to Remark \ref{remark:growths} following the proof for a discussion on these assumptions.

\noindent\begin{definition}\label{def1} 
Let $U\in L^\infty((0,T)\times Q)$. We say that the function $U$ is a dissipative weak solution to \eqref{eq:scl} with initial data $U^0$ if
  \begin{equation}\label{eq:weak}
      \int_Q \phi_i(0,\cdot)U^0_i+\int_0^T\int_Q \partial_t\phi_i\cdot U_i  + \int_0^T\int_Q \partial_{\al}\phi_i \cdot f_{i\al}(U) = 0
  \end{equation}
  for any $\phi\in C^1_c([0,T),C^1(Q))$ and $i$=1,..,$N$, and the dissipation inequality
   \begin{equation}\label{eqWE}
   \int_Q\theta(0)\eta(U^0)+\int_0^T\int_Q\dot{\theta}\;\eta(U) \geq 0
   \end{equation}
 holds for any nonnegative test function $\theta\in C^1_c([0,T))$.
\end{definition}

Recall that Lipschitz solutions $\bar{U}\in W^{1,\infty}([0,T]\times \overline{Q})$ satisfy \eqref{eqWE} as an equality, that is
\begin{equation}\label{eqSE}
   \int_Q\theta(0)\eta(\bar{U}^0) +\int_0^T\int_Q\dot{\theta}\;\eta(\bar{U})= 0.
   \end{equation}
Moreover, note that if $\int_Q U^0 = 0$ then also $\int_QU(t,\cdot) = 0$ for a.e. $t\in(0,T)$. This follows by testing \eqref{eq:weak} with $\phi(t,x) = \theta(t)$ where $\theta$ localises at a fixed time, as in \eqref{eq:localise}.
The main theorem of this section now follows, cf. \cite[Theorem 4.1]{Daf}.

\begin{theorem}\label{wsu}
Let $\bar{U}\in W^{1,\infty}([0,T]\times\overline{Q})$ and $U\in L^\infty((0,T)\times Q)$ be, respectively, a strong and a dissipative  weak solution of \eqref{eq:scl} emanating from the zero-average initial data $\bar{U}^0$, $U^0 \in L^\infty(Q)$. Assume that $U$ and $\bar{U}$ satisfy the PDE constraint $\mathcal{A}\bar{U} = \mathcal{A}U = 0$, and that the entropy $\eta$ satisfies (H1)-(H4). Then, there exist constants $C_1,\,C_2>0$
such that for almost all $t \in (0, T)$
\[
\int_Q|V(U(t,\cdot)-\bar{U}(t,\cdot))|^2 \leq C_1\int_Q |V(U^0-\bar{U}^0)|^2 \, e^{C_2\,t},
\]
where $V$ is the auxiliary function defined in \eqref{eq:auxf}.
\end{theorem}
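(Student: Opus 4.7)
The plan is to adapt the classical relative entropy method of Dafermos/DiPerna, with the key replacement being the use of Theorem \ref{theorem:2} in place of convexity of $\eta$. Setting $\psi(t,\cdot):= U(t,\cdot)-\bar U(t,\cdot)$, we first observe that $\psi$ is $\mathcal{A}$-free and zero-average (since $\mathcal{A}\bar U=\mathcal{A}U=0$ and both have zero mean), and that $\bar U(t,\cdot)\in \mathcal{U}_K$ for $K=\|\bar U\|_{L^\infty}$ with modulus of continuity inherited from $\bar U\in W^{1,\infty}$. Hence Theorem \ref{theorem:2} applies pointwise in $t$.

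\emph{Step 1 (relative entropy inequality).} By testing the weak formulation \eqref{eq:weak} for $U$ with a suitable approximation of $\theta(t)\,D\eta(\bar U(t,x))$, using the energy identity \eqref{eqSE} for the Lipschitz solution $\bar U$ together with the dissipation inequality \eqref{eqWE} for $U$, and a standard time-localisation argument
\begin{equation}\label{eq:localise}
\theta(s)\to \mathbbm{1}_{[0,t]}(s),
\end{equation}
one obtains the relative entropy inequality
\[
\int_Q \eta(U|\bar U)(t,\cdot) - \int_Q \eta(U^0|\bar U^0) \leq -\int_0^t\!\!\int_Q \partial_\alpha D\eta(\bar U)\cdot f_{\cdot\alpha}(U|\bar U)\,dx\,ds,
\]
where $f_{i\alpha}(U|\bar U) = f_{i\alpha}(U)-f_{i\alpha}(\bar U) - Df_{i\alpha}(\bar U)\cdot\psi$ is the relative flux. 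Since $\bar U\in W^{1,\infty}$ and both $U,\bar U$ lie in a fixed ball of $L^\infty$, the $C^2$-regularity of $f$ on this ball gives $|f(U|\bar U)|\leq C|\psi|^2\leq C|V(\psi)|^2$, so
\[
\int_Q \eta(U|\bar U)(t) \leq \int_Q \eta(U^0|\bar U^0) + C\int_0^t\!\!\int_Q |V(\psi)|^2 \,ds.
\]

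\emph{Step 2 (G\aa rding and negative Sobolev control).} By Theorem \ref{theorem:2} applied to $W=\eta$,
\[
\int_Q |V(\psi(t))|^2 \leq C_0 \int_Q \eta(U|\bar U)(t) + C_1 \|\psi(t)\|^2_{W^{-1,(2,p)}}.
\]
To control the negative Sobolev norm, note that $\partial_t\psi = -\dv(f(U)-f(\bar U))$ in $\mathcal{D}'$, hence for $r\in\{2,p\}$,
\[
\|\psi(t)\|_{W^{-1,r}} \leq \|\psi^0\|_{W^{-1,r}} + C\!\int_0^t \|f(U)-f(\bar U)\|_{L^r}\,ds \leq \|\psi^0\|_{W^{-1,r}} + C\!\int_0^t \|\psi(s)\|_{L^r}\,ds,
\]
using $\|\dv g\|_{W^{-1,r}}\lesssim \|g\|_{L^r}$ and local Lipschitz continuity of $f$ on the common $L^\infty$-ball. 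Squaring (resp.\ raising to the $p$-th power), applying H\"older in time, and noting that $\|\psi^0\|^2_{W^{-1,2}}+\|\psi^0\|^p_{W^{-1,p}}\lesssim \int_Q |V(\psi^0)|^2$ together with $\|\psi\|_{L^2}^2+\|\psi\|_{L^p}^p=\int_Q |V(\psi)|^2$, we obtain
\[
\|\psi(t)\|^2_{W^{-1,(2,p)}} \leq C\!\int_Q |V(\psi^0)|^2 + C(T)\!\int_0^t\!\!\int_Q |V(\psi(s))|^2\,ds.
\]

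\emph{Step 3 (Gronwall).} Chaining the inequalities from Steps 1 and 2, together with $\eta(U^0|\bar U^0)\leq C|V(\psi^0)|^2$ from Lemma \ref{lemma:technical_general}(a), yields
\[
\int_Q |V(\psi(t))|^2 \leq \tilde C_1 \int_Q |V(\psi^0)|^2 + \tilde C_2 \int_0^t\!\!\int_Q |V(\psi(s))|^2\,ds,
\]
and Gronwall's lemma provides the claimed exponential bound.

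The main obstacle is Step 1: the test function $\theta(t)\,D\eta(\bar U(t,x))$ is only Lipschitz in $t$ and lives in $C^0_t C^0_x$ in general, while \eqref{eq:weak} requires $C^1_c([0,T);C^1(Q))$. This is handled by mollifying in time and passing to the limit, and by recovering the initial datum $\int_Q\eta(U^0|\bar U^0)$ via the standard localisation \eqref{eq:localise} (whose passage to the characteristic function requires the weak solution to have a well-defined trace at $t=0$, guaranteed by \eqref{eq:weak}). Crucially, unlike \cite{Daf}, no smallness hypothesis on the oscillations of $U,\bar U$ is needed, because Theorem \ref{theorem:2} provides the coercive bound on $\int_Q|V(\psi)|^2$ under only the $\mathcal{A}$-quasiconvexity of $\eta$ and $\bar U\in\mathcal{U}_K$.
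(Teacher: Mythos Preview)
Your proposal is correct and follows essentially the same route as the paper: derive the relative entropy inequality via the Dafermos--DiPerna method (testing with $\theta(t)D\eta(\bar U)$ and using \eqref{eq:etaq2}, \eqref{eqWE}, \eqref{eqSE}), apply the G\aa rding inequality of Theorem \ref{theorem:2}, control the $W^{-1,(2,p)}$ penalty through the PDE $\partial_t\psi=-\dv(f(U)-f(\bar U))$, and close with Gr\"onwall. The paper handles the test-function regularity issue exactly as you suggest (by density), and your quadratic bound on $f(U|\bar U)$ matches the paper's estimate $|\mathcal{R}|\lesssim\int|U-\bar U|^2$.
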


\begin{proof}
Let $U$ and $\bar{U}$ as in the statement and test the equations \eqref{eq:weak} with the function $\phi(t,x) = \theta(t)D\eta(\bar{U}(t,x))$, where $\theta\in C^1_c([0,T))$. Note that this is an appropriate test function by density. Subtracting the equations for $U$ from the equations for $\bar{U}$, we infer that 
\begin{align*}
&\int_Q \theta(0)\,D_j\eta(\bar{U}^0)\,(U^0_j-\bar{U}^0_j) +\int_0^T\int_Q \dot{\theta}\,D_j\eta(\bar{U})\,(U_j-\bar{U}_j) \\
& =-\int_0^T\int_Q\theta\left\{\,\partial_{\al}D_k\eta(\bar{U})\,\big(f_{k\al}(U)-f_{k\al}(\bar {U})\big) + \partial_tD_j\eta(\bar{U})\,( U_j-\bar{U}_j)\right\},
\end{align*}
where $\partial_\al,\,\partial_t$ and $D_j$ stand for the operators $\frac{\partial}{\partial x_\al},\,\frac{\partial}{\partial t}$ and $\frac{\partial}{\partial U_j}$ respectively. By \eqref{eq:etaq2}, we observe that $\partial_tD_j\eta(\bar{U})= -\partial_{\al}D_k$
and thus
\begin{equation}
\label{eq31}
\begin{aligned}
&\int_Q \theta(0)\,D_j\eta(\bar{U}^0)\,(U^0_j-\bar{U}^0_j) +\int_0^T\int_Q \dot{\theta}\,D_j\eta(\bar{U})\,(U_j-\bar{U}_j) \\
&=-\int_0^T\int_Q \theta\,\left[ \partial_{\al}D_k\eta(\bar{U})\right]\,f_{k\al}(U|\bar{U}) =:\mathcal{R},
\end{aligned}
\end{equation}
\noindent
where $f_{k\al}(U|\bar{U}):=f_{k\al}(U)-f_{k\al}(\bar{U})-D_j f_{k\al}(\bar{U})\,(U_j-\bar{U}_j)$ is the relative flux. This complies with the notation in the previous section as $U = \bar{U} + (U - \bar{U})$. Next, by \eqref{eqWE}, \eqref{eqSE} and \eqref{eq31}, we get that
\begin{equation}
\label{eq:eq32}
\int_0^T\int_Q \dot{\theta}\,\eta(U|\bar{U}) +\int_Q \theta(0)\,\eta(U^0|\bar{U}^0)\geq -\mathcal{R},
\end{equation}
where the relative entropy is given by
\[
\eta(U|\bar{U}) = \eta(U) - \eta(\bar{U}) - D_j\eta(\bar{U})(U_j - \bar{U}_j).
\]
Indeed, \eqref{eq:eq32} follows by \eqref{eqWE} and \eqref{eqSE} since
\begin{align*}
   &\int_0^T\int_Q \dot{\theta}\,\eta(U|\bar{U})+\int_Q \theta(0)\,\eta(U^0|\bar{U}^0) \\
   =&\int_0^T\int_Q \dot{\theta}\,\eta(U) +\int_Q \theta(0)\,\eta(U^0) 
   -\int_0^T\int_Q \dot{\theta}\,\eta(\bar{U})-\int_Q \theta(0)\,\eta(\bar{U}^0) \\
   -&\int_0^T\int_Q \dot{\theta}\,D_j\eta(\bar{U})\,(U_j-\bar{U}_j)\,-\int_Q \theta(0)\,D_j\eta(\bar{U}^0)\,(U^0_j -\bar{U}^0_j).
\end{align*}
We next follow a standard argument to localise in time by letting $(\theta_m)_{m\in\mathbb{N}}\subset C^{\infty}_c([0,T))$ be a bounded sequence approximating the function
\begin{equation}
\label{eq:localise}
     \theta(\tau)= 
      \left\{
          \begin{array}{cc}
        
               1,  & \tau\in [0,t) \\
               (t-\tau)/\epsilon+1 ,  & \tau\in [t,t+\epsilon) \\
               0,     &\tau\in[t+\epsilon,T)
             
          \end{array}
      \right.
\end{equation}
such that $(\theta_m)_m$ is nonincreasing and $\dot{\theta}_m(\tau)\to\dot{\theta}(\tau)$ for all $\tau\neq t,t+\epsilon$. Note that $\dot{\theta}_m\leq 0$ and so testing
\eqref{eq:eq32} with $\theta_m$ we find that
\begin{equation}
\label{eq33}
\begin{aligned}
&\int_0^T\int_Q |\dot{\theta}_m(\tau)| \,\eta(U(\tau,x)|\bar{U}(\tau,x))\,dxd\tau  \leq \mathcal{R}+\int_Q \eta(U^0(x)|\bar{U}^0(x))\,dx.
\end{aligned}
\end{equation}
Since $U\in L^\infty((0,T)\times Q)$, $f_{k\alpha}$ is locally Lipschitz and $\partial_{\al}D_k\eta(\bar{U})$ is bounded, we compute from \eqref{eq31} that
\[
|\mathcal{R}|\leq C\int_0^T\int_Q |\theta|\,|U-\bar{U}|^2.
\]
As $U$ is bounded, taking the limit $m\to\infty$ in \eqref{eq33} by dominated convergence, gives
\begin{equation*}
\frac{1}{\epsilon}\int_t^{t+\epsilon}\int_Q \eta(U|\bar{U}) \leq C\int_0^{t+\epsilon}\int_Q |U-\bar{U}|^2 +\int_Q \eta(U^0|\bar{U}^0).
\end{equation*}
Then, sending $\epsilon\to 0$, we get that for almost all $t\in (0,T)$,
\begin{equation*}
\int_Q \eta(U|\bar{U})\leq C\int_0^t\int_Q|U-\bar{U}|^2 + \int_Q \eta(U^0|\bar{U}^0).
\end{equation*}
Note that the relative entropy is quadratic on bounded functions and thus, by Theorem \ref{theorem:2}, we deduce that for almost all $t\in (0,T)$ and up to a suitable constant 
\begin{equation}
\label{eq35}
\begin{aligned}
 \int_Q|V(U-\bar{U})|^2  \lesssim \int_0^t\int_Q |U-\bar{U}|^2   + \int_Q |V(U^0-\bar{U}^0)|^2 + \|U-\bar{U}\|_{W^{-1,(2,p)}}^2,  
 \end{aligned}
\end{equation}
where $\|\cdot\|_{W^{-1,(2,p)}}$ is the auxiliary mapping defined in \eqref{eq:auxpr}. In order to apply Gr\"{o}nwall's inequality and conclude the proof, it remains to estimate the last term on the right-hand side of \eqref{eq35}. Similarly to Dafermos in \cite{Daf}, for $r\in\{2,p\}$, we infer that since $L^r(Q)$ embeds into $W^{-1,r}(Q)$
\[
\|U(t,\cdot)-\bar{U}(t,\cdot)\|_{W^{-1,r}}\lesssim \|U^0-\bar{U}^0\|_{L^{r}}+\int_0^t\|\partial_t\{U(s,\cdot)-\bar{U}(s,\cdot)\}\|_{{W^{-1,r}}}ds.
\]
 By taking into account \eqref{eq:scl} we deduce the bound
\begin{align}
    \|\partial_t\{U(s,\cdot)-\bar{U}(s,\cdot)\}\|_{W^{-1,r}(Q)}&\leq  \|\partial_\al f_{i\al}(U)-\partial_\al f_{i\al}(\bar{U})\|_{W^{-1,r}(Q)}\nonumber \\
    &\leq C \| f_{i\al}(U)- f_{i\al}(\bar{U})\|_{L^{r}(Q)}\nonumber \\
    &\leq C \|U(s,\cdot)-\bar{U}(s,\cdot)\|_{L^{r}(Q)},\label{eq:lip_flux}
\end{align}
where the last inequality follows from the fact that $f$ is locally Lipschitz and $U$ is bounded. Finally, by H\"older's inequality, we infer that
\[
\|U(t,\cdot)-\bar{U}(t,\cdot)\|_{W^{-1,r}}^r\lesssim \|U^0 - \bar{U}^0\|_{L^{r}}^r + \,T^{\frac{r}{r-1}}\int_0^t\|U(s,\cdot)-\bar{U}(s,\cdot)\|^r_{L^{r}}ds.
\]
Returning to \eqref{eq35} and applying the above bound for $r=2$ and $r=p$ we arrive at
\[
 \int_Q|V(U-\bar{U})|^2  \lesssim \int_0^t\int_Q |V(U-\bar{U})|^2 + \int_Q |V(U^0 - \bar{U}^0)|^2.   
\]
An application of Gr\"{o}nwall's inequality completes the proof.
\end{proof}

\begin{remark}\label{remark:growths}
Note that the $L^\infty$ bounds on weak solutions are needed in the estimate \eqref{eq:lip_flux}. Otherwise, mild growth assumptions on the flux suffice to consider merely $L^p$ weak solutions. 
Moreover, we note that the assumed growths on $\eta$ do not e.g. directly apply to elasticity where $\eta(v,F) = \frac12 |v|^2 + W(F)$. However, as $|v|^2$ is convex, it is  immediate to deduce the result assuming (H1)-(H4) on $W$ \cite{Spirito}.
\end{remark}

\begin{remark}\label{rem:elliptic}
{\bf ($L^p$ bounds and elliptic estimates)} We propose a general structure that allows us to recover elliptic estimates, similar to those in \cite{Spirito}, for merely $L^p$ solutions of system \eqref{eq:scl}. To be more precise, instead of the PDE constraint \eqref{eq:involutions} we assume that
\begin{equation}\label{eq:extra}
\partial_tC(U(t,x))+\mathcal{B}g(U(t,x))=0,
\end{equation}
where $\mathcal{B}$ is a potential operator of $\mathcal{A}$, $g:\mathbb{R}^N\to\mathbb{R}^N$ is globally Lipschitz and $C:\mathbb{R}^N\to\mathbb{R}^N$ such that $\left(C(U)\right)_i\in \left\{0,U_i\right\}$ for $i=1,..,N$. In particular, the non-zero rows of $C(U)$ constitute the constrained components of $U$. 

In this setting the involutions \eqref{eq:involutions} are embodied in \eqref{eq:extra} which may thus serve as an alternative formulation. The latter equation may seem restrictive but it is satisfied in the equations of elasticity and electromagnetism for $(\mathcal{B},\mathcal{A})=(\nabla,\rm curl)$ and $(\mathcal{B},\mathcal{A})=(\rm curl,\rm div)$ respectively.

Suppose in addition that $\mathcal{B}$ is first-order and elliptic which is true in elasticity but not in Maxwell's equations. Note that the estimates of Lemma \ref{Ptype} are now a consequence of ellipticity and in particular of the fact that $\mathbb{B}^*(\xi)\mathbb{B}(\xi)$ is invertible for all $\xi\in\mathbb{R}^d\setminus\{0\}$. We claim that these assumptions suffice to bound the term $\|U-\bar{U}\|_{W^{-1,(2,p)}}^2$ and replace estimate \eqref{eq:lip_flux} without any $L^\infty$ assumptions. 

Below, we sketch the proof of this claim for the simpler case $p=2$ and $C=id$ although the general case follows similarly. For zero-average $U\in L^\infty(0,T;L^2(Q))$ and $W\in L^\infty(0,T;W^{1,2}(Q))$ a primitive of $U$, equation \eqref{eq:extra} implies that
\begin{equation*}
    \int_0^T\int_Q(W-\bar{W})\mathcal{B}^*\psi_t-\int_0^T\int_Q\left(g(U)-g(\bar{U}) \right){B}^*\psi=0,
\end{equation*}
for all $\psi\in C^\infty_c([0,T);C^\infty(Q))$. Now, by testing the above equation with $\psi=\mathcal{B}h$ where, for $\phi\in C^\infty_c([0,T);C^\infty(Q))$ with zero average, $h$ is the unique solution of the elliptic system
\begin{align*}
    -\mathcal{B}^*\mathcal{B}h&=\phi,\quad\int_Qh =0,
\end{align*}
we infer that
\begin{equation}\label{eq:eest}
    \int_0^T\int_Q(W-\bar{W})_t\phi-\int_0^T\int_Q\left(g(U)-g(\bar{U})\right)\phi=0.
\end{equation}
Note that we have moved the time derivative on $(W-\bar W)$. This is indeed possible since by \eqref{eq:extra} and the fact that $g$ is globally Lipschitz, $U_t \in L^\infty(0,T;H^{-1}(Q))$. In particular, $\mathcal{B} W_t \in L^\infty(0,T;H^{-1}(Q))$ and by ellipticity of $\mathcal{B}$, we infer that $W_t \in L^\infty(0,T;L^2(Q))$.
We may now test \eqref{eq:eest} with the function $\phi=W-\bar{W}$, while localising in time, to get that by Young's inequality and the Lipschitz condition on $g$,
\begin{equation*}
    \int_Q|W-\bar{W}|^2\lesssim \int_Q |W^0 - \bar{W}^0|^2 + \int_0^t\int_Q|U-\bar{U}|^2+\int_0^t\int_Q|W-\bar{W}|^2,\,\, t\in (0,T).
\end{equation*}
Then, the above estimate inserted in \eqref{eq35} and Gr\"{o}nwall's inequality allows us to complete the proof. In the case $p > 2$, one may follow a strategy as in \cite{Spirito} where the Sobolev inequalities arise from the ellipticity of $\mathcal{B}$.  Moreover, when $C(U)\neq U$, additional structure in the PDE is required to conclude the relative entropy argument as in the case of elasticity.
\end{remark}

\section{An application in statics: sufficient conditions for local minimisers}\label{sec:min}

In this section, we study functionals of the form
\begin{align}\label{eq:mp}
\mathcal{W}[U]:=\int_Q W(U(x))dx, 
\end{align}
for $U \in L^p_{\mathcal{A}}(Q)$ where 
\[
L^p_{\mathcal{A}}(Q):=\left\{U\in L^p(Q)\,:\,\mathcal{A}U = 0,\,\int_Q U = 0\right\}.
\]
Motivated by recent developments in the vectorial Weierstrass problem \cite{JudithR, JudithKostas, GM09}, we provide an appropriate generalisation for functionals of the form \eqref{eq:mp} and differential operators other than ${\rm curl}$, that is we establish sufficient conditions for local minimisers in the strong $W^{-1,p}$ topology based on $\mathcal{A}$-quasiconvexity assumptions. We remark that the presented result entails a quantitative version of uniqueness for these minimisers, see also Corollary \ref{cor:unique}, which had not been previously observed. The proof comes as a direct consequence of Theorem \ref{thm:L^2small} which formed the basis for the G\aa rding inequality and its proof has been largely motivated by these recent developments on the Weierstrass problem. 

In particular, we show the following theorem. We note that the natural space of variations for $\mathcal{W}$ is given by
\[
\left\{\psi \in C(Q)\,:\,\mathcal{A}\psi = 0,\,\int_Q\psi = 0\right\}.
\]
However, under the growth assumptions (H3), one may equivalently consider the closure of variations in $L^p$ given by the space $L^p_{\mathcal{A}}(Q)$.

\begin{theorem}\label{thm:min}
Assume that $W\in C^2(\R^N)$ satisfies (H3), (H4) and let $\bar{U}\in  L^p_\mathcal{A}(Q)\cap C(\overline{Q})$ such that the following conditions hold:
\begin{itemize}
    \item $\bar U$ is a weak solution of the Euler-Lagrange equations, $\mathcal{B}^* DW(\bar U) = 0$, i.e.
\[
\int_Q DW(\bar{U}(x))\psi(x) dx=0,
\]
for all $\psi\in  L^p_\mathcal{A}(Q)$;
\item the second variation is strongly positive at $\bar U$, i.e.
\[
\int_QD^2W(\bar{U}(x))\psi(x)\cdot\psi(x)dx\geq c\int_Q |\psi(x)|^2dx,
\]
for all $\psi\in  L^p_\mathcal{A}(Q)$;
\item $W$ is strongly $\mathcal{A}$-quasiconvex at $\bar U(x_0)$ for all $x_0\in Q$, i.e. 
\[
\int_Q \left[W(\bar U(x_0) + \psi(x)) - W(\bar U(x_0))\right] dx \geq c_0 \int_Q |V(\psi(x))|^2dx,
\]
for all $\psi\in  L^p_\mathcal{A}(Q)$.
\end{itemize}
Then, there exists $\eps_0>0$ and $C>0$ such that 
\begin{align*}
    \mathcal{W}[U] - \mathcal{W}[\bar U]\geq C\int_Q|V(U(x)-\bar{U}(x))|^2dx,
\end{align*}
for all $U \in  L^p_\mathcal{A}(Q)$ with $\|U - \bar{U}\|_{W^{-1,p}(Q)}\leq \varepsilon_0$.
\end{theorem}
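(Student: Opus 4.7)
The plan is to reduce, via the Euler-Lagrange condition, to a coercivity estimate for the excess functional $\psi \mapsto \int_Q W(\bar U + \psi | \bar U)$ on a $W^{-1,p}$-neighbourhood of the origin, and then to adapt the contradiction argument in the proof of Theorem \ref{thm:L^2small} so as to exploit the strong positivity of the second variation at $\bar U$ in order to dispense with the lower-order term $\|\psi\|^2_{W^{-1,(2,p)}}$ that appears there.

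Setting $\psi := U - \bar U \in L^p_\mathcal{A}(Q)$ and expanding, one has
\[
\mathcal{W}[U] - \mathcal{W}[\bar U] = \int_Q DW(\bar U)\cdot \psi + \int_Q W(\bar U + \psi|\bar U),
\]
and the first term vanishes by the Euler-Lagrange equation. It thus suffices to establish $\int_Q W(\bar U + \psi|\bar U) \geq C\int_Q|V(\psi)|^2$ for $\|\psi\|_{W^{-1,p}(Q)}\leq \varepsilon_0$. Following Section \ref{sec:garding}, I would introduce $\tilde W := W - c_2 |V|^2$ with $c_2>0$ small enough so that (a) Lemma \ref{lemma:tWall} applies, making $\tilde W$ strongly $\mathcal{A}$-quasiconvex at $\bar U(x_0)$ for every $x_0\in\overline{Q}$, and (b) the strong positivity of $D^2 W(\bar U)$ transfers to $D^2 \tilde W(\bar U)$, namely $\int_Q D^2\tilde W(\bar U)\psi\cdot\psi \geq \tfrac{c}{2}\int_Q|\psi|^2$ on $L^p_\mathcal{A}(Q)$ (using that $D^2|V|^2$ is uniformly bounded on the compact set $\bar U(\overline{Q})$). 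By Lemma \ref{lemma:technical_general} (d) applied to the strongly convex $|V|^2$, the claim reduces to showing $\int_Q \tilde W(\bar U + \psi|\bar U) \geq 0$ on the $W^{-1,p}$-ball of some radius $\varepsilon_0$.

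This reduced inequality would be established by contradiction, mirroring the proof of Theorem \ref{thm:L^2small} but with the crucial simplification that no lower-order correction $\tfrac{c_1}{2}\sum_i \int_Q|\nabla^{l-i}\varphi_k|^2$ appears. Suppose $\psi_k\in L^p_\mathcal{A}(Q)$ satisfies $\|\psi_k\|_{W^{-1,p}}\to 0$ and $\int_Q \tilde W(\bar U + \psi_k|\bar U)<0$, and let $\varphi_k$ be the $\mathbb{B}^\dagger$-primitive of $\psi_k$. Proposition \ref{prop:2} with $a_k=1$, $h_k=\varphi_k$ shows that $\alpha_k := \|\psi_k\|_{L^2}$ and $\beta_k := \|\psi_k\|_{L^p}$ tend to zero, and Lemma \ref{lemma:technical_general} (c) applied to $\tilde W$ yields $\beta_k^p \leq \Lambda\alpha_k^2$. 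Writing $w_k = \varphi_k/\alpha_k$ and decomposing $\mathcal{B}w_k = \mathcal{B}w + \mathcal{B}f_k + \mathcal{B}b_k$ via Lemma \ref{Dec}, the analysis of the concentrating part (nonnegative by Proposition \ref{prop:2}) together with the Young measure arguments of Step 4 of the proof of Theorem \ref{thm:L^2small} yields $\int_Q\langle\nu_x, D^2\tilde W(\bar U)z\cdot z\rangle \leq 0$, where $(\nu_x)$ is the $\mathcal{A}$-$2$-Young measure generated by $(\mathcal{B}w_k)$.

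The strong positivity of $D^2\tilde W(\bar U)$ now replaces Proposition \ref{prop:1}: Jensen's inequality for $\mathcal{A}$-quasiconvex quadratic forms combined with (b) gives
\[
0 \leq \tfrac{c}{2}\int_Q|\mathcal{B}w|^2 \leq \int_Q D^2\tilde W(\bar U)\mathcal{B}w\cdot\mathcal{B}w \leq \int_Q\langle\nu_x, D^2\tilde W(\bar U)z\cdot z\rangle \leq 0,
\]
so $\mathcal{B}w = 0$ and $w=0$. A final application of Proposition \ref{prop:2} with $a_k=\alpha_k$, $h_k=\varphi_k$ then produces the contradiction $c_0/4 \leq \liminf_k \alpha_k^{-2}\int_Q \tilde W(\bar U+\psi_k|\bar U) < 0$. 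I expect the main technical obstacle to be verifying the hypothesis $\alpha_k^{-1}V(\nabla^{l-i}\varphi_k)\to 0$ in $L^2(Q)$ for this last application of Proposition \ref{prop:2}: by Lemma \ref{Ptype} (iv) this reduces to showing $\|\mathcal{B}w_k\|_{W^{-1,2}}\to 0$ and $\|\eta_k\mathcal{B}w_k\|_{W^{-1,p}}\to 0$ with $\eta_k = \alpha_k/\beta_k$, which in turn follow from the compact embeddings $L^2\hookrightarrow\hookrightarrow W^{-1,2}$ and $L^p\hookrightarrow\hookrightarrow W^{-1,p}$ once one establishes $\mathcal{B}w_k\rightharpoonup 0$ in $L^2$ and $\eta_k\mathcal{B}w_k\rightharpoonup 0$ in $L^p$.
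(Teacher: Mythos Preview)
Your proposal is correct and follows essentially the same approach as the paper's own proof: reduce to the excess functional via the Euler--Lagrange equation, rerun the proof of Theorem~\ref{thm:L^2small} without the lower-order penalty, and replace the use of Proposition~\ref{prop:1} in Step~5 by the (transferred) strong positivity of the second variation of $\tilde W$ at $\bar U$. Your anticipated technical point about verifying $\alpha_k^{-1}V(\nabla^{l-i}\varphi_k)\to 0$ for the final application of Proposition~\ref{prop:2} is resolved exactly as you suggest, via Lemma~\ref{Ptype}(iv) and the compact embeddings, once $w=0$ is known.
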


\begin{proof}
The main ingredient in the proof is Theorem \ref{thm:L^2small} combined with the simple observation that if $\bar U$ solves the Euler-Lagrange system, then
\[
\int_Q W(\bar U + \psi|\bar U) = \int_Q \left[W(\bar U +\psi) - W(\bar U)\right] = \mathcal{W}(\bar U +\psi) - \mathcal{W}(\bar U),
\]
for any $\psi \in L^p_{\mathcal{A}}(Q)$. Note that the relative energy $W(\cdot|\cdot)$ is precisely the so-called Weierstrass excess or E-function for the functional $\mathcal{W}$. Thus, given $\bar U$ as in the statement, let $U\in L^p_{\mathcal{A}}(Q)$ and set $\psi = U - \bar U \in L^p_{\mathcal{A}}(Q)$.
We prove that there exists $\eps_0>0$ and $C>0$ such that 
\begin{align*}
    \int_Q W(\bar U + \psi|\bar U)\geq C\int_Q|V(\psi)|^2
\end{align*}
whenever $\|\psi\|_{W^{-1,p}(Q)} \leq \varepsilon_0$. 
This is precisely the statement of Theorem \ref{thm:L^2small} without the penalty term $\|\psi\|^2_{W^{-1,(2,p)}}$. One may now proceed in the exact same way as in the proof of Theorem \ref{thm:L^2small}, without the penalty term, noting that this is only required in Step 5 where Proposition \ref{prop:1} is applied. 

In the present case, we claim that the strong positivity of the second variation of $W$ at $\bar U$ implies the strong positivity of the second variation of $\tilde W$ at $\bar U$ which replaces the need for Proposition \ref{prop:1} in Step 5. 
Indeed, below we show that 
\begin{equation}\label{eq:2ndVarlast}
\int_Q D^2\tW(\bar U)\psi \cdot \psi \gtrsim \int_Q|\psi|^2
\end{equation}
which assumes the role of Proposition \ref{prop:1} in our setting. To prove \eqref{eq:2ndVarlast}, note that $\tilde W$ is defined in Lemma \ref{lemma:tWall} as $\tilde W(\lambda) = W(\lambda) - c_2|V(\lambda)|^2$
where $c_2=c_2(W,K)$ can be chosen even smaller if necessary. For $|\lambda|\leq K$ and $z\in \R^N$, we compute that $|D^2 \left(|V(\lambda)|^2\right) z \cdot z| \leq C(p,K)|z|^2$.
and we may thus choose $c_2 = c_2(p,K)$ small enough so that for $\|\bar U\|_{L^\infty}\leq K$ and $\psi \in L^p_{\mathcal{A}}(Q)$, \eqref{eq:2ndVarlast} holds. This completes the proof.
\end{proof}

\begin{remark}
Note that in the case $\mathcal{A} = {\rm curl}$, Theorem \ref{thm:min} reduces to a statement about $L^p$ local minimisers, thus recovering partially the result in \cite{JudithR}. In fact this is a statement about $L^p$ local minimisers for any operator $\mathcal{A}$ that admits an elliptic, first-order potential $\mathcal{B}$. Indeed, ellipticity is required to control the $L^p$ norm of the primitive by the $W^{-1,p}$ norm of the function without reverting to properties of the potential operator as in Lemma \ref{Ptype}.

We also remark that extending the presented result to the case of a bounded domain $\Omega$ is nontrivial as, working on the torus, allows for Fourier Analysis tools that are otherwise not available. However, for $\mathcal{A} = {\rm curl}$, the above result can be extended in a straightforward way for pure displacement boundary conditions. In fact, with slight modifications one may treat problems with mixed boundary conditions, whereby a part of the boundary remains free. Then, one needs to append the sufficient conditions of Theorem \ref{thm:min} with quasiconvexity at the boundary, see \cite{JudithR} as well as \cite{JudithKostas, GM09} for $L^\infty$ local minimisers. Below we show that this is indeed true in the form of a corollary that extends existing results to include a quantitative estimate of uniqueness. The case of functionals depending on lower order terms and $L^\infty$ local minimisers lies outside the scope of the present work. We refer the reader to \cite{JudithR, JudithKostas, GM09} for discussions on quasiconvexity at the boundary. Note that a notion of $\mathcal{A}$-quasiconvexity at the boundary for $p$-homogeneous functions was defined in \cite{Aqcb} in the context of lower semicontinuity for signed integrands.  
\end{remark}

For the following corollary, let $\Omega\subset \R^d$ a bounded domain with $C^1$ boundary $\partial\Omega$ such that
\[
\partial\Omega = \Gamma_D \cap \Gamma_N
\]
where $\Gamma_D$ is a relatively open subset of $\partial\Omega$ and $\Gamma_N = \partial\Omega\setminus \overline{\Gamma_D}$, where $\Gamma_D$ is the relative interior of $\overline{\Gamma_D}$. We consider the minimization problem
\[
\mathcal{W}(y)=\int_\Omega W(\nabla y(x))\,dx
\]
for $y \in W^{1,p}_{y_0,D}(\Omega)$ where for a generic function $g$ we write
\[
W^{1,p}_{g,D}(\Omega) = \left\{y\in W^{1,p}(\Omega)\,:\, y = g \mbox{ on }\Gamma_D\right\},
\]
in the sense of trace. We thus interpret $\Gamma_D$ as the Dirichlet part of the boundary, and $\Gamma_N$ as the Neumann boundary. Moreover, for a unit vector $n$, we define the half ball
\[
B_{n}^- := \left\{x\in \R^d\,:\, |x|<1,\,x\cdot n < 0\right\}.
\]

\begin{corollary}\label{cor:unique}
Assume that $W\in C^2(\R^{n\times d})$ satisfies (H3), (H4) and let $\bar{y}\in C^1(\overline{\Omega})\cap W^{1,p}_{y_0,D}(\Omega)$ such that the following conditions hold:
\begin{itemize}
\item $\bar y$ is a weak solution of the Euler-Lagrange equations, ${\rm div} DW(\nabla \bar y) = 0$, i.e.
\[
\int_\Omega DW(\nabla\bar{y}(x))\nabla\varphi(x) dx=0,
\]
for all $\varphi\in  C^1(\Omega)\cap W^{1,p}_{0,D}(\Omega)$;
\item the second variation is strongly positive at $\bar y$, i.e.
\[
\int_\Omega D^2W(\nabla\bar{y}(x))\nabla\varphi(x)\cdot\nabla\varphi(x)dx\geq c\int_\Omega |\nabla\varphi(x)|^2dx,
\]
for all $\varphi\in  C^1(\Omega)\cap W^{1,p}_{0,D}(\Omega)$;
\item $W$ is strongly quasiconvex at $\nabla\bar y(x_0)$ for all $x_0\in \overline{\Omega}$, i.e. 
\[
\int_B \left[W(\nabla\bar y(x_0) + \nabla\varphi(x)) - W(\nabla\bar y(x_0))\right] dx \geq c_0 \int_B |V(\nabla\varphi(x))|^2dx,
\]
for all $\varphi\in W^{1,p}_0(B)$, where $B$ denotes the unit ball in $\R^d$;
\item $W$ is strongly quasiconvex at $\nabla\bar y(x_0)$ for all $x_0\in \Gamma_N$, i.e. denoting by $n(x_0)$ the outward pointing unit normal at $x_0\in \Gamma_N$,
\[
\int_{B^-_{n(x_0)}} W(\nabla\bar y(x_0) + \nabla\varphi(x)|\nabla\bar y(x_0)) dx \geq c_0 \int_{B^-_{n(x_0)}} |V(\nabla\varphi(x))|^2dx,
\]
for all $\varphi\in  W^{1,p}(B^-_{n(x_0)})$ such that $\varphi = 0$ on $\partial B \cap \overline{B^-_{n(x_0)}}$.
\end{itemize}
Then, there exists $\eps_0>0$ and $C>0$ such that 
\begin{align*}
    \mathcal{W}[y] - \mathcal{W}[\bar y]\geq C\int_\Omega|V(\nabla y(x)-\nabla\bar{y}(x))|^2dx,
\end{align*}
for all $y \in  W^{1,p}_{y_0,D}(\Omega)$ with $\|y - \bar{y}\|_{L^{p}(\Omega)}\leq \varepsilon_0$.
\end{corollary}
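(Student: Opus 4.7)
The plan is to run the blow-up/contradiction scheme of Theorem~\ref{thm:L^2small}, adapted to the bounded domain $\Omega$ via a partition of unity that distinguishes interior pieces, pieces meeting $\Gamma_D$, and pieces meeting $\Gamma_N$. I would first assume the statement fails, producing sequences $y_k \in W^{1,p}_{y_0,D}(\Omega)$ with $\|y_k - \bar y\|_{L^p(\Omega)} \to 0$ and $\int_\Omega W(\nabla\bar y + \nabla\varphi_k \mid \nabla\bar y) < \tfrac{1}{k}\int_\Omega|V(\nabla\varphi_k)|^2$, where $\varphi_k := y_k - \bar y \in W^{1,p}_{0,D}(\Omega)$. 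Step~1 of the proof of Theorem~\ref{thm:L^2small}, which uses only (H4), shows that $\alpha_k := \|\nabla\varphi_k\|_{L^2}$ and $\beta_k := \|\nabla\varphi_k\|_{L^p}$ tend to zero with $\beta_k^p/\alpha_k^2$ bounded. I would then normalise $w_k := \alpha_k^{-1}\varphi_k$ and apply the Decomposition Lemma on the bounded domain (see the remark following Lemma~\ref{Dec}) to split $\nabla w_k$ into an oscillating and a concentrating part.

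Next, I would cover $\overline\Omega$ by a finite collection of small sets $\{U_i\}$: open balls $B(x_i,r_i) \Subset \Omega$ in the interior, and, at boundary points $x_j \in \partial\Omega$, sets $B(x_j,r_j) \cap \Omega$ which, by the $C^1$ regularity of $\partial\Omega$, are diffeomorphic to the half-ball $B^-_{n(x_j)}$ via a local flattening. By shrinking radii I would arrange that each boundary set meets either $\Gamma_D$ only or $\Gamma_N$ only; a neighbourhood of the junction $\partial\Gamma_D$ can be excised at negligible cost by a capacity argument as in \cite{JudithR, JudithKostas, GM09}. Choosing a subordinate partition of unity $\{\rho_i^2\}$, I would test the appropriate quasiconvexity inequality on the truncations $\rho_i\varphi_k$: the interior assumption (iii) on interior pieces; the interior assumption again on Dirichlet pieces, after extending $\rho_i\varphi_k$ by zero through $\Gamma_D$; and quasiconvexity at the boundary (iv) on Neumann pieces, where $\rho_j\varphi_k$ vanishes on the curved portion of $\partial(B(x_j,r_j)\cap\Omega)$ but is free on the flat $\Gamma_N$ portion.

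Summing the localised inequalities and using the continuity of $\nabla\bar y$ on $\overline\Omega$ to replace $\nabla\bar y(x)$ by $\nabla\bar y(x_i)$ up to an error controlled by Lemma~\ref{lemma:technical_general}(b), exactly as in Proposition~\ref{prop:2}, would yield the analogue of inequality~\eqref{eq:9} from the proof of Theorem~\ref{thm:L^2small}. The strong positivity of the second variation (assumption (ii)) transfers to $\tW$ with constant $c/2$ provided the parameter $c_2 = c_2(W,K)$ in the definition of $\tW$ is chosen small enough; this plays the role of Proposition~\ref{prop:1} in the present bounded-domain setting. Combining with Jensen's inequality for $\mathcal{A}$-quasiconvex functions as in Step~5 of Theorem~\ref{thm:L^2small} then forces the weak limit of $\nabla w_k$ to vanish while $\|\nabla w_k\|_{L^2}=1$, delivering the sought contradiction.

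The main obstacle I expect is the rigorous handling of the boundary-flattening diffeomorphism together with the Dirichlet--Neumann junction $\partial\Gamma_D$: one must verify that the local change of coordinates introduces only lower-order errors, compatible with the equiintegrability output of the Decomposition Lemma, and that the capacity argument near $\partial\Gamma_D$ produces boundary contributions that can be absorbed. Once this is in place, the remainder is a direct translation of the torus arguments of Section~\ref{sec:garding} to $\Omega$.
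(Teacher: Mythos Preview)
Your proposal is correct and follows the same underlying strategy as the paper, but the paper is considerably more economical in presentation. Rather than rerunning the full blow-up scheme on $\Omega$, the paper observes that the non-quantitative statement $\int_\Omega W(\nabla\bar y + \nabla\varphi\,|\,\nabla\bar y)\geq 0$ under the stated hypotheses is already established in \cite{JudithR, JudithKostas} (where the partition-of-unity argument, the boundary flattening, and the analogue of Proposition~\ref{prop:2} at Neumann points are carried out). The only new content is then to apply that existing result to $\tW = W - c_2|V|^2$ in place of $W$, which immediately yields the quantitative lower bound. All that remains is to check that $\tW$ inherits each of the four hypotheses for $c_2$ small enough; (H3), (H4), interior quasiconvexity, and the second-variation positivity are already handled in Lemma~\ref{lemma:tWall} and the proof of Theorem~\ref{thm:min}, so the sole new verification is that $\tW$ is strongly quasiconvex at the boundary, which follows in one line from Lemma~\ref{lemma:technical_general}(a).

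Your route reproduces from scratch the argument the paper cites, which has the advantage of being self-contained and makes the role of the boundary-quasiconvexity assumption fully explicit. The paper's route is shorter and isolates more cleanly what is genuinely new in the corollary: the passage from minimality to quantitative uniqueness via the $\tW$ trick. Both approaches avoid the obstacle you flagged about the Dirichlet--Neumann junction by deferring it to \cite{JudithR, JudithKostas, GM09}.
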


\begin{proof}
The proof that (H3), (H4), the strong positivity of the second variation and the quasiconvexity conditions imply that
\begin{equation}\label{eq:unique1}
   \int_\Omega W(\nabla\bar y + \nabla\varphi|\nabla\bar y)\geq 0,
\end{equation}
is given in \cite{JudithR,JudithKostas}. Note that the proof relies on proving Proposition \ref{prop:2} also for points on $\Gamma_N$ and appropriate test functions, using the quasiconvexity at the boundary. This is the content of \cite[Proposition 4.6]{JudithKostas} where, due to the presence of lower order terms, $L^\infty$ assumptions are needed which are not required here. Proposition \ref{prop:2} replaces the quasiconvexity conditions for the rest of the proof which thus remains the same. Then, the satisfaction of the Euler-Lagrange equations implies that \eqref{eq:unique1} gives the minimality of $\bar{y}$.

Thus, in order to obtain the lower bound and the quantitative estimate of uniqueness, we prove \eqref{eq:unique1} for the function $\tW$ of Lemma \ref{lemma:tWall}, in place of $W$. In particular, we need to find a constant $c_2=c_2(W,\|\bar y\|_{C^1})$ such that $\tW$
satisfies (H3), (H4), the strong positivity of the second variation, as well as the quasiconvexity conditions. That $c_2$ can be chosen so that (H3), (H4) and the strong quasiconvexity holds is the content of Lemma \ref{lemma:tWall}. That the second variation is strongly positive is part of the proof of Theorem \ref{theorem:2} and we are thus left to infer the quasiconvexity at the boundary. Denoting by $f(\lambda) = |V(\lambda)|^2$, we compute
\begin{align*}
  &  \int_{B^-_{n(x_0)}} \tW(\nabla\bar y(x_0) + \nabla\varphi|\nabla\bar y(x_0))  = \int_{B^-_{n(x_0)}} W(\nabla\bar y(x_0) + \nabla\varphi|\nabla\bar y(x_0)) \\
    &\quad  \quad - c_2 \int_{B^-_{n(x_0)}} f(\nabla\bar y(x_0) + \nabla\varphi(x)|\nabla\bar y(x_0)) \geq (c_0 - c_2 C) \int_{B^-_{n(x_0)}} |V(\nabla\varphi)|^2,
\end{align*}
by the strong quasiconvexity at the boundary and Lemma \ref{lemma:technical_general} (a). We may thus choose $c_2$ small enough depending on $C=C(W,\|\bar y\|_{C^1})$ and $c_0$ to complete the proof.
\end{proof}

\section*{Appendix: Proof of Lemma \ref{lemma:technical_general}}

\begin{proof}
For (a), note that if $|z_1| + |z_2| \leq 1$, we find that
\begin{align*}
& |f(\lm+z_1|\lm) - f(\lm+z_2|\lm)| \leq \int_0^1 \left| \left[D^2f(\lm + s z_1) - D^2f(\lm + s z_2)\right]z_1\cdot z_2 \right| \\
& + \int_0^1 \left| D^2f(\lm + s z_1)z_1\cdot (z_1 - z_2) \right| + \int_0^1 \left| D^2f(\lm + s z_2)z_2\cdot (z_1 - z_2) \right|\\
& \leq C\left( |z_1||z_1 - z_2| + |z_1 - z_2| |z_1| |z_2| + |z_2||z_1 - z_2|\right),
\end{align*}
where $C = C(f,K)$. Since, for $|z_1| + |z_2| \leq 1$, it holds that $ |z_1| |z_2| \leq |z_1| + |z_2| $, we find that for all $|\lm|\leq K$,
\[
|f(\lm+z_1|\lm) - f(\lm+z_2|\lm)|  \leq C (|z_1| + |z_2|)|z_1 - z_2|.
\]
On the other hand, if $|z_1| + |z_2|>1$, we compute that for $|\lm|\leq K$
\begin{align*}
|f(\lm+z_1|\lm) - f(\lm+z_2|\lm)| & \leq |f(\lm + z_1) - f(\lm + z_2)| + |Df(\lm)\cdot (z_1 - z_2)|\\
& \leq C(K) (1 + |z_1|^{p-1} + |z_2|^{p-1})|z_1 - z_2|\\
&\leq C(K)( |z_1| + |z_2| + |z_1|^{p-1} + |z_2|^{p-1})|z_1 - z_2|,
\end{align*}
since $|z_1| + |z_2| > 1$. This completes the proof of (a).

Concerning (b), we follow the same strategy. If $|z|\leq 1$, 
\begin{align*}
|f(\lm_1+z|\lm_1) - f(\lm_2+z|\lm_2)| & \leq \int_0^1 \left|D^2f(\lm_1 + sz) - D^2f(\lm_2 + s z) \right| |z|^2\,ds\\
& \leq C(f,K) \left| \lm_1-\lm_2\right||z|^2,
\end{align*}
hence, given $\delta>0$ we may choose $R \leq  \delta/C(f,K)$. Similarly, for $|z|>1$,
\begin{align*}
|f(\lm_1+z|\lm_1) - f(\lm_2+z|\lm_2)| & \leq \left|f(\lm_1 + z) - f(\lm_2 + z) \right| + \left|f(\lm_1) - f(\lm_2) \right| \\
&\quad +  \left|Df(\lm_1) - Df(\lm_2) \right||z|\\
& \leq C(f,K) (1 + |z| + |z|^{p-1}) \left|\lm_1  -\lm_2 \right|\\
&\leq C(f,K) |\lm_1- \lm_2| |V(z)|^2.
\end{align*}
Hence, $R$ as above suffices to complete the proof of (b).

Regarding (c), we follow \cite{GM09}. For any $|z| \leq 1$, we find $C=C(f,K)>0$ such that
\begin{align*}
f(\lm+z|\lm) = \int_0^1(1-s) D^2 f(\lambda + s z)\,ds\, z \cdot z
\geq - C |z|^2 \geq |z|^p - (C+1)|z|^2.
\end{align*}
On the other hand, if $|z|> 1$, by coercivity, we get
\[
f(\lm+z|\lm) \geq d_1|z|^p - d_2(f,K) - d_3(f,K)|z| \geq d_1|z|^p - (d_2+d_3)|z|^2,
\]
concluding the proof of (c).

For the proof of (d), note that by Young's inequality
\begin{align*}
f(\lm+z|\lm) \geq c|z|^p - C(f,K) - C(\delta)| |Df(\lm)|^q - \delta c|z|^p
 \geq \tilde{c}|z|^p - C(f,K,\delta),
\end{align*}
for $\delta$ small enough. Hence, if $|z|^p \geq 2C(f,K.\delta)/\tilde{c} + 1:= R^p$, we deduce that
\[
f(\lm+z|\lm) \geq \frac{\tilde{c}}{2}|z|^p \geq \frac{\tilde{c}}{4}|V(z)|^2,
\]
as $|z|\geq 1$. On the other hand, for $|z| < R$, 
by strong convexity,
\begin{align*}
f(\lm+z|\lm) & = \int_0^1(1-t) D^2f(\lm+sz)z\cdot z\,ds \\
& \geq \frac{1}{4}\gamma |z|^2 + \frac{R^2}{4}\gamma \frac{|z|^2}{R^2} \geq  \frac{1}{4}\gamma |z|^2 + \frac{R^2}{4R^p}\gamma |z|^p\geq \tilde{c} |V(z)|^2.
\end{align*}
Combining the two cases, we infer the result.
\end{proof}


\begin{thebibliography}{10}

\bibitem{vich64}
{\sc Agranovich, M.~S., and Vishik, M.~I.}
\newblock Elliptic problems with a parameter and parabolic problems of general
  type.
\newblock {\em Uspekhi Matematicheskikh Nauk 19}, 3 (1964), 53--161.

\bibitem{Arroyo19}
{\sc Arroyo-Rabasa, A.}
\newblock Characterization of generalized {Y}oung measures generated by
  $\mathcal{A}$-free measures.
\newblock {\em arXiv preprint arXiv:1908.03186\/} (2019).

\bibitem{Anna2}
{\sc Arroyo-Rabasa, A., and Skorobogatova, A.}
\newblock On the fine properties of elliptic operators.
\newblock {\em arXiv preprint arXiv:1911.08474\/} (2019).

\bibitem{breit19}
{\sc Breit, D., Diening, L., and Gmeineder, F.}
\newblock On the trace operator for functions of bounded
  $\mathcal{A}$-variation.
\newblock {\em Anal. PDE\/} (2019).

\bibitem{Brenier}
{\sc Brenier, Y.}
\newblock Hydrodynamic structure of the augmented born-infeld equations.
\newblock {\em Archive for {R}ational {M}echanics and {A}nalysis 172}, 1
  (2004), 65--91.

\bibitem{JudithR}
{\sc Campos~{C}ordero, J.}
\newblock Boundary regularity and sufficient conditions for strong local
  minimizers.
\newblock {\em Journal of Functional Analysis 272}, 11 (2017), 4513--4587.

\bibitem{JudithKostas}
{\sc Campos~{C}ordero, J., and Koumatos, K.}
\newblock Necessary and sufficient conditions for the strong local minimality
  of ${C}^1$ extremals on a class of non-smooth domains.
\newblock {\em ESAIM: Control, Optimisation and Calculus of Variations\/}.

\bibitem{Coleman}
{\sc Coleman, B.~D., and Dill, E.~H.}
\newblock Thermodynamic restrictions on the constitutive equations of
  electromagnetic theory.
\newblock {\em Zeitschrift f{\"u}r angewandte Mathematik und Physik 22}, 4
  (1971), 691--702.

\bibitem{DacQC}
{\sc Dacorogna, B.}
\newblock Quasi-convexit{\'e} et semi-continuit{\'e} inf{\'e}rieure faible des
  fonctionnelles non lin{\'e}aires.
\newblock {\em Annali della Scuola Normale Superiore di Pisa-Classe di Scienze
  9}, 4 (1982), 627--644.

\bibitem{Dacorogna}
{\sc Dacorogna, B.}
\newblock {\em Weak continuity and weak lower semicontinuity of non-linear
  functionals}, vol.~922.
\newblock Springer, 2006.

\bibitem{Daf}
{\sc Dafermos, C.}
\newblock Quasilinear hyperbolic systems with involutions.
\newblock {\em Archive for Rational Mechanics and Analysis 94}, 4 (1986),
  373--389.

\bibitem{DafermosB}
{\sc Dafermos, C.}
\newblock {\em Hyperbolic conservation laws in continuum physics}, vol.~3.
\newblock Springer, 2005.

\bibitem{Daf13}
{\sc Dafermos, C.}
\newblock Non-convex entropies for conservation laws with involutions.
\newblock {\em Philosophical Transactions of the Royal Society A: Mathematical,
  Physical and Engineering Sciences 371}, 2005 (2013), 20120344.

\bibitem{Guido}
{\sc De~Philippis, G., and Rindler, F.}
\newblock On the structure of $\mathcal{A}$-free measures and applications.
\newblock {\em Annals of Mathematics\/} (2016), 1017--1039.

\bibitem{decell}
{\sc Decell~Jr, H.}
\newblock An application of the {C}ayley-{H}amilton theorem to generalized
  matrix inversion.
\newblock {\em SIAM review 7}, 4 (1965), 526--528.

\bibitem{Fon}
{\sc Fonseca, I., and M{\"u}ller, S.}
\newblock $\mathcal{A}$-{Q}uasiconvexity, {L}ower {S}emicontinuity, and {Y}oung
  {M}easures.
\newblock {\em SIAM Journal on Mathematical Analysis 30}, 6 (1999), 1355--1390.

\bibitem{garding53}
{\sc G{\aa}rding, L.}
\newblock Dirichlet's problem for linear elliptic partial differential
  equations.
\newblock {\em Mathematica Scandinavica\/} (1953), 55--72.

\bibitem{Giaquinta}
{\sc Giaquinta, M.}
\newblock {\em Multiple integrals in the calculus of variations and nonlinear
  elliptic systems}.
\newblock No.~105. Princeton University Press, 1983.

\bibitem{BVmin}
{\sc Gmeineder, F., and Kristensen, J.}
\newblock Partial regularity for {BV} minimizers.
\newblock {\em Archive for Rational Mechanics and Analysis 232}, 3 (2019),
  1429--1473.

\bibitem{BogEmb}
{\sc Gmeineder, F., and Rai{\c{t}}{\u{a}}, B.}
\newblock Embeddings for $\mathcal{A}$-weakly differentiable functions on
  domains.
\newblock {\em Journal of Functional Analysis 277}, 12 (2019), 108278.

\bibitem{GM09}
{\sc Grabovsky, Y., and Mengesha, T.}
\newblock Sufficient conditions for strong local minima: The case of ${C}^1$
  extremals.
\newblock {\em Transactions of the American Mathematical Society 361}, 3
  (2009), 1495--1541.

\bibitem{NL}
{\sc Guerra, A., and Rai{\c{t}}{\u{a}}, B.}
\newblock Quasiconvexity, null lagrangians, and {H}ardy space integrability
  under constant rank constraints.
\newblock {\em arXiv preprint arXiv:1909.03923\/} (2019).

\bibitem{spJan}
{\sc Kirchheim, B., and Kristensen, J.}
\newblock On rank one convex functions that are homogeneous of degree one.
\newblock {\em Archive for Rational Mechanics and Analysis 221}, 1 (2016),
  527--558.

\bibitem{Spirito}
{\sc Koumatos, K., and Spirito, S.}
\newblock Quasiconvex elastodynamics: Weak-strong uniqueness for measure-valued
  solutions.
\newblock {\em Communications on Pure and Applied Mathematics 72}, 6 (2019),
  1288--1320.
\newblock [Corrigendum: in press].

\bibitem{Aqcb}
{\sc Kr{\"a}mer, J., Kr{\"o}mer, S., Kru{\v{z}}{\'\i}k, M., and Path{\'o}, G.}
\newblock $\mathcal{A}$-quasiconvexity at the boundary and weak lower
  semicontinuity of integral functionals.
\newblock {\em Advances in Calculus of Variations 10}, 1 (2017), 49--67.

\bibitem{KrisDec}
{\sc Kristensen, J.}
\newblock Lower semicontinuity in spaces of weakly differentiable functions.
\newblock {\em Mathematische Annalen 313}, 4 (1999), 653--710.

\bibitem{KrisJCC}
{\sc Kristensen, J., and Campos~Cordero, J.}
\newblock Personal communication, 2019.

\bibitem{BogKris}
{\sc Kristensen, J., and Rai{\c{t}}{\u{a}}, B.}
\newblock Oscillation and concentration in sequences of {PDE} constrained
  measures.
\newblock {\em arXiv preprint arXiv:1912.09190\/} (2019).

\bibitem{mullerNotes}
{\sc M{\"u}ller, S.}
\newblock Variational models for microstructure and phase transitions.
\newblock In {\em Calculus of {V}ariations and geometric evolution problems}.
  Springer, 1999, pp.~85--210.

\bibitem{murat1}
{\sc Murat, F.}
\newblock Compacit{\'e} par compensation.
\newblock {\em Annali della Scuola Normale Superiore di Pisa-Classe di Scienze
  5}, 3 (1978), 489--507.

\bibitem{necas67}
{\sc Necas, J.}
\newblock {\em Les m{\'e}thodes directes en th{\'e}orie des {\'e}quations
  elliptiques}.
\newblock Masson, 1967.

\bibitem{Pedregal}
{\sc Pedregal, P.}
\newblock {\em Parametrized measures and variational principles}, vol.~30.
\newblock Birkh{\"a}user, 2012.

\bibitem{Potentials}
{\sc Rai{\c{t}}{\u{a}}, B.}
\newblock Potentials for $\mathcal{A}$-quasiconvexity.
\newblock {\em Calculus of Variations and Partial Differential Equations 58}, 3
  (2019).

\bibitem{Anna1}
{\sc Rai{\c{t}}{\u{a}}, B., and Skorobogatova, A.}
\newblock Continuity and canceling operators of order n on $\mathbb{R}^n$.
\newblock {\em arXiv preprint arXiv:1903.03574\/} (2019).

\bibitem{Rindler}
{\sc Rindler, F.}
\newblock {\em Calculus of Variations}.
\newblock Springer, 2018.

\bibitem{Serre}
{\sc Serre, D.}
\newblock Hyperbolicity of the nonlinear models of maxwell?s equations.
\newblock {\em Archive for {R}ational {M}echanics and {A}nalysis 172}, 3
  (2004), 309--331.

\bibitem{sil17}
{\sc Sil, S.}
\newblock Regularity for elliptic systems of differential forms and
  applications.
\newblock {\em Calculus of Variations and Partial Differential Equations 56}, 6
  (2017), 172.

\bibitem{skipper19}
{\sc Skipper, J.~W., and Wiedemann, E.}
\newblock Lower semi-continuity for $\mathcal{A}$-quasiconvex functionals under
  convex restrictions.
\newblock {\em arXiv preprint arXiv:1909.11543\/} (2019).

\bibitem{murat2}
{\sc Tartar, L.}
\newblock Une nouvelle m{\'e}thode de r{\'e}solution d?{\'e}quations aux
  d{\'e}riv{\'e}es partielles non lin{\'e}aires.
\newblock In {\em Journ{\'e}es d?Analyse non lin{\'e}aire}. Springer, 1978,
  pp.~228--241.

\bibitem{murat3}
{\sc Tartar, L.}
\newblock Compensated compactness and applications to partial differential
  equations.
\newblock In {\em Nonlinear analysis and mechanics: Heriot-Watt symposium\/}
  (1979), vol.~4, pp.~136--212.

\bibitem{Zhang}
{\sc Zhang, K.}
\newblock Remarks on quasiconvexity and stability of equilibria for variational
  integrals.
\newblock {\em Proceedings of the American Mathematical Society 114}, 4 (1992),
  927--930.

\end{thebibliography}

\end{document}